
\documentclass[11pt]{amsart} 

\usepackage{amsmath, amssymb, amsthm}

\usepackage{geometry}
\usepackage{enumitem} 
\usepackage{braket}
\usepackage{eucal}
\usepackage[mathletters]{ucs}
\usepackage[utf8x]{inputenc}

\usepackage[normalem]{ulem}

\usepackage{tikz}
\usepackage{tikz-cd}

%


\DeclareMathOperator{\sSet}{\mathcal{S}et_{\Delta}} 
\DeclareMathOperator{\Cat}{\mathcal{C}at} 
\DeclareMathOperator{\SCat}{\mathcal{C}at_{\Delta}}
\DeclareMathOperator{\calC}{\mathcal{C}}
\DeclareMathOperator{\calA}{\mathcal{A}}  
\DeclareMathOperator{\calB}{\mathcal{B}}
\DeclareMathOperator{\calD}{\mathcal{D}}

\DeclareMathOperator{\calF}{\mathcal{F}}
\DeclareMathOperator{\calH}{\mathcal{H}}
\DeclareMathOperator{\calJ}{\mathcal{J}}
\DeclareMathOperator{\calW}{\mathcal{W}}
\DeclareMathOperator{\sk}{\mathrm{sk}}
\DeclareMathOperator{\cosk}{\mathrm{cosk}}
\DeclareMathOperator{\Map}{\mathrm{Map}}
\DeclareMathOperator{\Fun}{\mathrm{Fun}}
\DeclareMathOperator{\Hom}{\mathrm{Hom}}

\DeclareMathOperator{\h}{\mathrm{h}} 
\DeclareMathOperator{\id}{id}

\theoremstyle{plain}
\newtheorem{thmx}{Theorem}

\newtheorem{theorem}{Theorem}[section]
\newtheorem{lemma}[theorem]{Lemma}
\newtheorem{proposition}[theorem]{Proposition}
\newtheorem{corollary}[theorem]{Corollary}

\newtheorem*{proposition*}{Proposition}

\theoremstyle{definition}
\newtheorem{definition}[theorem]{Definition}
\newtheorem{remark}[theorem]{Remark} 
\newtheorem{example}[theorem]{Example} 
\newtheorem{notation}[theorem]{Notation}
\newtheorem{construction}[theorem]{Construction}

\begin{document} 

\title[Notes on the Joyal model structure]{Notes on the Joyal model structure}
  \author[D.\ Stevenson]{Danny Stevenson}
  \address[Danny Stevenson]
  {School of Mathematical Sciences\\
  University of Adelaide\\
  Adelaide, SA 5005 \\
  Australia}
  \email{daniel.stevenson@adelaide.edu.au}

\thanks{This research was supported under the Australian
Research Council's {\sl Discovery Projects} funding scheme (project number DP180100383).}

\subjclass[2010]{55U10, 55U35, 18G30}

\begin{abstract}
We give a new construction of the 
Joyal model structure on the category $\mathcal{S}\mathrm{et}_{\Delta}$ 
of simplicial sets, and we provide a simple characterization of the fibrations in  
it.  We characterize the inner anodyne maps 
between simplicial sets in terms of categorical equivalences and use this characterization to establish 
the inner model structure on the category $\mathcal{S}\mathrm{et}_{\Delta}(O)$ 
of simplicial sets whose set of zero-simplices is equal to a fixed set $O$.       
\end{abstract}
\maketitle

\section{Introduction} 
\label{sec:intro}

Recall that a simplicial set 
$S$ is said to be a {\em quasi-category} 
if every map $\Lambda^n_i\to S$ 
with $0<i<n$ can be extended  
along the inclusion $\Lambda^n_i\subseteq 
\Delta^n$ to give an $n$-simplex 
of $S$.  Quasi-categories, or restricted  
Kan complexes,  
were introduced by 
Boardman and Vogt in their 
work on homotopy invariant 
algebraic structures \cite{BV}.    

It is a remarkable fact, and a 
great insight of Joyal, that most 
concepts and results from ordinary category 
theory can be extended to quasi-categories, 
see \cite{J2}.  Lurie has vastly extended 
Joyal's theory in the books \cite{HTT} and \cite{HA}.    
We will follow Lurie and use 
the term $\infty$-{\em category} instead of 
quasi-category.  

One of the key discoveries of Joyal 
was the existence of a primordial model 
structure on the category $\sSet$ of simplicial 
sets in which the $\infty$-categories 
are the fibrant objects.  This model 
structure is called the {\em Joyal model 
structure} and it plays an important role in the 
Joyal/Lurie theory of $\infty$-categories.  
For example, it makes precise the idea that 
every simplicial set $S$ presents 
an $\infty$-category by a generators 
and relations type construction.  A choice of a 
fibrant replacement for $S$ in the Joyal  
model structure gives an $\infty$-category 
$\calC$, well defined up to equivalence, 
which one may think of as an $\infty$-category 
generated by $S$.  

While the Joyal model 
structure on $\sSet$ is known to be cofibrantly 
generated, it is an open problem to describe an explicit set of generating 
acyclic cofibrations (compare with Remark 2.14 in \cite{DS}).  
Also, while the fibrations between fibrant 
objects in the Joyal 
model structure are well understood 
(see Corollary 2.4.6.5 of \cite{HTT}), 
an explicit description of the fibrations 
in general
is unknown.  This is due to the fact 
that the existing treatments of the Joyal 
model structure (see \cite{J2,HTT} and 
also the streamlined treatment in \cite{DS}) 
rely on set theoretic arguments 
which place a greater emphasis on the 
role of weak equivalences (compare with 
Proposition A.2.6.13 of \cite{HTT}).     

In this paper we will attempt to address  
these questions with the 
following theorems.  

\begin{thmx} 
\label{thm:A}
A map $p\colon X\to S$ in 
$\sSet$ is a fibration in the Joyal model structure  
if and only if it is an inner fibration 
and the induced functor $\h(p)\colon 
\h(X)\to \h(S)$ on homotopy categories is an isofibration.  
\end{thmx}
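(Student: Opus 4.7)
The forward direction is mostly formal. Assume $p$ is a Joyal fibration. The inner horn inclusions $\Lambda^n_i \hookrightarrow \Delta^n$ ($0<i<n$) are Joyal trivial cofibrations, so $p$ is automatically an inner fibration. For the isofibration property of $\h(p)$, let $J$ denote the nerve of the contractible groupoid on two objects; then $\{0\} \hookrightarrow J$ is a monic categorical equivalence, hence a Joyal trivial cofibration. Given an isomorphism $\varphi$ in $\h(S)$ together with a chosen lift $\tilde{a}$ of its source, I would represent $\varphi$ by a map $J \to S$ sending $0 \mapsto p(\tilde{a})$ and lift it along $\{0\} \hookrightarrow J$ using the Joyal fibration property of $p$; passing to the homotopy category yields the required lift of $\varphi$.

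The converse is the substantive direction. Assume $p$ is an inner fibration and $\h(p)$ is an isofibration. I need to verify that $p$ has the right lifting property against every monic categorical equivalence $i\colon A \hookrightarrow B$. The key input is the characterization of inner anodyne maps established earlier in the paper: a monomorphism is inner anodyne if and only if it is simultaneously a categorical equivalence and bijective on vertices. My plan is to reduce any lifting problem along $i$ to two simpler ones, one liftable against the inner fibration $p$ (via the inner anodyne characterization) and one liftable against the map $\{0\} \hookrightarrow J$ (via the isofibration hypothesis on $\h(p)$). Explicitly, using that $\h(i)$ is essentially surjective, one adjoins to $A$, for each vertex $v \in B_0 \setminus A_0$, a copy of $J$ along $\{0\} \hookrightarrow J$ realizing a chosen equivalence in $\h(B)$ between some $a_v \in A_0$ and $v$. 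This produces a factorization $A \hookrightarrow A' \to B$ in which $A \hookrightarrow A'$ is a (transfinite) pushout of the Joyal trivial cofibrations $\{0\} \hookrightarrow J$, while $A' \to B$ is bijective on vertices. After suitable bookkeeping to arrange $A' \to B$ to be monic, two-out-of-three combined with the inner anodyne characterization gives that $A' \to B$ is inner anodyne and hence lifts against $p$. The residual lifting problem along $A \hookrightarrow A'$ decomposes over the attached $J$'s into lifting problems for $\{0\} \hookrightarrow J$, each of which is resolved by first producing an equivalence in $\h(X)$ via the isofibration hypothesis on $\h(p)$ and then promoting that equivalence to an honest $J$-diagram in $X$ compatibly with a prescribed $J$-diagram in $S$.

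The main obstacle is precisely this promotion step in the converse direction, combined with the technical bookkeeping that ensures the factorization $A \hookrightarrow A' \to B$ can be arranged with $A' \to B$ a monomorphism bijective on vertices. The promotion of a homotopy-category equivalence back to a $J$-diagram in $X$ over a given $J$-diagram in $S$ is not formal: it depends essentially on the characterization of inner anodyne maps in terms of categorical equivalences, which is exactly the tool that allows one to upgrade information in $\h$ to genuine simplicial data. Making this transit from homotopy-categorical to simplicial data effective is the crux of the argument.
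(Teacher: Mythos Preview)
Both directions of your argument rely on representing isomorphisms in the homotopy category of an \emph{arbitrary} simplicial set by maps out of $J$, and this fails. In the forward direction you want to represent an isomorphism $\varphi$ in $\h(S)$ by a map $J \to S$; but $J$ has non-degenerate simplices in every dimension, while $S$ need only contain enough $2$-dimensional data to make $\varphi$ invertible in $\h(S)$ (compare Remark~\ref{rem:subcomplex S'}, which shows how little one can extract even in the pre-fibrant case). In the converse direction the same issue bites twice: your factorization $A \hookrightarrow A' \to B$ needs maps $J \to B$ realizing chosen equivalences in $\h(B)$, so there is in general no map $A' \to B$ at all; and your ``promotion step'' needs to manufacture a map $J \to X$ over a prescribed $J \to S$ from a mere isomorphism in $\h(X)$, with $X$ not an $\infty$-category. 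You correctly identify this last step as the crux, but Theorem~\ref{thm:C} alone does not supply it, and you give no argument.

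The paper avoids this obstruction by replacing the single map $\{0\} \to J$ with the set $\calJ$ of all countable two-vertex simplicial sets $B$ for which $\{0\} \to B$ is a categorical equivalence. Equivalences in a \emph{pre-fibrant} simplicial set do factor through members of $\calJ$ (Lemma~\ref{lem:bergners lemma}), and the descent machinery of Section~\ref{sec:descent} reduces an arbitrary inner fibration to one between pre-fibrant simplicial sets. The inner model structure on $\sSet(\{0,1\})$ (Theorem~\ref{thm:D}) then supplies the key lifting in Proposition~\ref{prop:bergners 2.5}; this is what stands in for your missing promotion step, and Proposition~\ref{prop:bergners 2.3} handles the forward direction without ever needing a map $J\to S$.
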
 

Here $\h\colon \sSet\to \Cat$ denotes 
the functor which sends a simplicial 
set to its homotopy category (see Section 
1.2.3 of \cite{HTT}).  

\begin{thmx} 
\label{thm:B}
A set of generating 
trivial cofibrations for the Joyal model 
structure on $\sSet$ is given by 
the following collection of morphisms: 
\begin{itemize} 
\item the inner horn inclusions 
$\Lambda^n_i\subseteq \Delta^n$, $0<i<n$ for $n\geq 2$; 
\item a set of representatives 
for the isomorphism classes of 
simplicial sets $B$ with two vertices $0$ and $1$, 
with countably many non-degenerate simplices and 
such that the inclusion $\set{0}\hookrightarrow B$ 
is a categorical equivalence.  
\end{itemize}
\end{thmx}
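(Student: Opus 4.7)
The plan is to verify that the proposed collection $I$ generates the trivial cofibrations of the Joyal model structure via the standard criterion: every element of $I$ is a Joyal trivial cofibration, and every map with the right lifting property against $I$ is a Joyal fibration. With this in place, the small object argument factors every map as an element of the saturation $\mathrm{cof}(I)$ followed by a map with the right lifting property against $I$, and a routine retract argument identifies $\mathrm{cof}(I)$ with the full class of Joyal trivial cofibrations.

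Verifying that each element of $I$ is a trivial cofibration is immediate: the inner horn inclusions are inner anodyne, hence categorical equivalences by the characterization of inner anodyne maps developed earlier in the paper, and the inclusions $\{0\} \hookrightarrow B$ are cofibrations by construction and categorical equivalences by hypothesis. The central step is to show that a map $p \colon X \to Y$ with the right lifting property against $I$ is a Joyal fibration. Lifting against inner horns makes $p$ an inner fibration, so by Theorem~\ref{thm:A} it suffices to check that $\h(p)$ is an isofibration. Given $x \in X_0$ and an isomorphism $\alpha \colon p(x) \to y$ in $\h Y$, I would realize $\alpha$ by a $1$-simplex $\sigma$ in $Y$, supply an inverse $1$-simplex $\tau$ and $2$-simplex witnesses for $[\sigma\tau]=[\mathrm{id}]$ and $[\tau\sigma]=[\mathrm{id}]$, and assemble this into a finite map $B_0 \to Y$. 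The task is then to extend $B_0 \to Y$ to a countable $B \supseteq B_0$ with a map to $Y$ such that $\{0\} \hookrightarrow B$ becomes a categorical equivalence; such a $B$ lies in our generating family, and the lift $B \to X$ supplied by the right lifting property produces the desired lift of $\alpha$ in $\h X$.

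The hard part is this construction of $B$. One cannot simply take $B$ to be the walking isomorphism $J$, since the higher simplices of $J$ would then need to map into $Y$ in a consistent manner, and such fillers need not exist in the non-fibrant $Y$. The construction must therefore iteratively enlarge $B_0$ using simplices genuinely available in $Y$ together, when needed, with degenerate choices, and it must terminate in countably many steps with $\{0\} \hookrightarrow B$ a genuine categorical equivalence. Once this is accomplished, the concluding retract argument is standard: an arbitrary Joyal trivial cofibration $j$ factors as $j = q \circ i$ with $i \in \mathrm{cof}(I)$ and $q$ having the right lifting property against $I$; $q$ is then a Joyal fibration, two-out-of-three makes $q$ a trivial fibration, and $j$ lifts against $q$, exhibiting $j$ as a retract of $i$ and placing it in $\mathrm{cof}(I)$.
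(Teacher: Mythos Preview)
Your overall strategy is the same as the paper's: show each map in the set $\calA$ is a trivial cofibration, show that every map with the right lifting property against $\calA$ is a Joyal fibration, and conclude by the retract argument. The paper packages the second step as Proposition~\ref{prop:lifting wrt A1 and A2}, and then Theorem~\ref{thm:B} falls out immediately from the construction of the model structure in Theorem~\ref{thm:joyal nodel str}.

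However, your sketch of the ``hard part'' has a genuine gap, and it is precisely the point where the paper does its real work. You propose to realize the isomorphism $\alpha$ in $\h(Y)$ by a single $1$-simplex $\sigma$, choose an inverse edge $\tau$, and choose $2$-simplices witnessing $[\sigma\tau] = \id$ and $[\tau\sigma] = \id$. For a general simplicial set $Y$ none of these choices need exist: morphisms in $\h(Y)$ are represented by strings of composable edges, not single edges; an inverse in $\h(Y)$ need not come from any edge of $Y$; and the relations defining $\h(Y)$ are generated by $2$-simplices but a given relation in $\h(Y)$ need not be witnessed by a single $2$-simplex. So your finite $B_0 \to Y$ cannot in general be built.

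The paper's resolution is to first perform descent along an inner anodyne map to a \emph{pre-fibrant} simplicial set (Proposition~\ref{prop:descent pre-fibrant simp sets}); in that setting, Remark~\ref{rem:subcomplex S'} shows that the finite witnesses you want actually do exist. The subsequent enlargement of this finite subcomplex to a countable $B$ with $\set{0} \hookrightarrow B$ a categorical equivalence is carried out in Lemma~\ref{lem:bergners lemma} and Lemma~\ref{lem:GJ lemma}, using a functorial Kan fibration $X(B') \to B'$ built from the small object argument and a bounded-cofibration style countable iteration. Your proposal correctly identifies that such a construction is needed but does not supply it; the descent step to pre-fibrant objects is the missing idea that makes the initial finite data available at all.
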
 

In order to prove these theorems 
we give a characterization of the 
inner anodyne maps in $\sSet$ which 
is of interest in its own right. Recall 
that the class of inner anodyne maps 
in $\sSet$ is the saturated class 
generated by the inner horn inclusions 
$\Lambda^n_i \subseteq \Delta^n$ for 
$0<i<n$. 

For instance, if $S$ is a simplicial set, 
then we can apply Quillen's small object 
argument to the set of these inner 
horn inclusions to produce an 
inner anodyne map $S\to \calC$, where 
$\calC$ is an $\infty$-category.  We may 
think of this $\infty$-category as 
being `freely generated' by $S$.  

In Section~\ref{sec:inner anodyne stuff} below we will prove 
the following theorem which answers 
a question left open by Joyal (see paragraph 2.10 of 
\cite{J3}).  

\begin{thmx}
\label{thm:C} 
A monomorphism in $\sSet$ is 
inner anodyne if and only if 
it is bijective on vertices 
and it is a categorical equivalence.  
\end{thmx}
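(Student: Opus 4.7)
The plan is to handle the two implications separately.

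For the ``only if'' direction, the closure properties of the relevant classes of morphisms suffice. The class of monomorphisms which are bijective on vertices is saturated and contains each inner horn inclusion $\Lambda^n_i\hookrightarrow \Delta^n$ with $0<i<n$, since such an inclusion is an isomorphism on $0$-simplices for $n\geq 2$. Similarly, the inner horn inclusions are categorical equivalences, and the class of monomorphisms which are categorical equivalences forms a saturated class; both facts should be available from the earlier material in the paper.

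The substantive content is the ``if'' direction. Given a monomorphism $i\colon A\to B$ which is bijective on vertices and a categorical equivalence, the plan is to apply Quillen's small object argument to the set of inner horn inclusions to factor $i$ as $A\xrightarrow{j} Z\xrightarrow{p} B$ with $j$ inner anodyne and $p$ an inner fibration. By the ``only if'' direction, $j$ is bijective on vertices and a categorical equivalence, and two-out-of-three then gives the same for $p$. To conclude, one exhibits $i$ as a retract of $j$, for which it suffices that $p$ have the right lifting property against $i$. Thus the theorem reduces to the following key lemma: every inner fibration which is bijective on vertices and a categorical equivalence is a trivial fibration (in the sense of having the right lifting property against every monomorphism).

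I expect the proof of this key lemma to be the main obstacle. Explicitly, one must establish the right lifting property of $p$ against $\partial\Delta^n\hookrightarrow \Delta^n$ for each $n\geq 1$ (the case $n=0$ being vertex bijectivity). For $n=1$ the strategy is that $\h(p)\colon \h(Z)\to \h(B)$ is an equivalence of categories which is bijective on objects, hence an isomorphism; an edge of $B$ can therefore be lifted to $Z$ up to the homotopy relation, and a $2$-simplex witnessing the homotopy together with an inner horn filling should correct the lift to one with the required boundary. For $n\geq 2$ the plan is to reduce the lifting problem to an inductive step on slice simplicial sets via join decompositions $\Delta^n\cong \Delta^0\star \Delta^{n-1}$, using that both the inner fibration and the categorical equivalence properties of $p$ transport well along slices. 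The challenge will be to carry this out using only tools developed earlier in the paper, since Theorem~C underpins the construction of the Joyal model structure itself and therefore cannot freely invoke it.
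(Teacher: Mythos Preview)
Your overall architecture is correct and matches the paper: the ``only if'' direction is as you say, and the ``if'' direction is reduced via the retract argument to the key lemma that an inner fibration which is bijective on vertices and a categorical equivalence is a trivial Kan fibration. This is exactly Proposition~\ref{prop:equiv statements for ia} and Theorem~\ref{thm:main thm for inner fibs} in the paper.

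However, your sketch of the key lemma has a genuine gap at the step you flag as ``the categorical equivalence properties of $p$ transport well along slices.'' The slice reduction for $n\geq 2$ brings you to the map $X_{x/}\times_X\{y\}\to S_{s/}\times_S\{y\}$ of naive left mapping spaces, and you need this to be a weak equivalence. But for a general simplicial set $S$, the naive mapping space $\Hom^L_S(s,s')$ need not have the homotopy type of $\Map_{\calC}(s,s')$ for an $\infty$-category $\calC$ generated by $S$ (Remark~\ref{rem:naive mapping spaces}); so knowing that $p$ is a categorical equivalence does not directly tell you anything about this map. Your $n=1$ argument has the same hidden problem: for a general simplicial set $S$ the morphisms in $\h(S)$ are not homotopy classes of edges, so lifting an edge ``up to the homotopy relation'' and correcting by a $2$-simplex is not available without further hypotheses on $S$.

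The paper's solution is a substantial piece of machinery you have not anticipated: one first shows (Proposition~\ref{prop:descent pre-fibrant simp sets}) that any inner fibration $p\colon X\to S$ satisfies inner anodyne \emph{descent} along an inner anodyne map $S\to T$ with $T$ \emph{pre-fibrant}, so that $p$ is a pullback of an inner fibration $q\colon Y\to T$ between pre-fibrant simplicial sets. One then proves (Proposition~\ref{prop:key prop}) that for pre-fibrant simplicial sets the naive mapping spaces \emph{do} compute the correct homotopy type. With this reduction in hand, your slice argument goes through: the map on naive mapping spaces is now a genuine homotopy equivalence because $p$ is fully faithful, and the left fibration $X_{x/}\to S_{s/}\times_S X$ has contractible fibers, hence is a trivial Kan fibration. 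The descent step is the main technical content of the paper and is what makes the otherwise natural strategy you outline actually work.
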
 

Note that this theorem has the 
following interesting consequence 
(see Proposition~\ref{prop:equiv statements for ia}): the class 
of inner anodyne maps in $\sSet$ 
satisfies the 2-out-of-3 property 
in the class of monomorphisms 
in $\sSet$.  In other words, 
if $u\colon A\to B$ and 
$v\colon B\to C$ are 
monomorphisms in $\sSet$ such 
that any two of the maps $u$, 
$v$ and $vu$ are inner 
anodyne, then all three maps 
are inner anodyne.  
It is clear that the class of 
inner anodyne maps is closed 
under composition.  The fact 
that the class of inner anodyne 
maps satisfies the so-called 
right cancellation property 
was established in \cite{S2}.  
It is not obvious that the 
analogous left cancellation property 
is also satisfied.  

As an application of Theorem~\ref{thm:C}, 
we construct what we call the 
{\em inner model structure} on the 
category $\sSet(O)$, consisting of 
all simplicial sets whose set of 
$0$-simplices is equal to a fixed set $O$ 
and whose morphisms are the 
simplicial maps which induce 
the identity on $O$.  

More precisely, we prove 
the following result in 
Section~\ref{sec:inner anodyne stuff}. 

\begin{thmx} 
\label{thm:D}
There is the structure of a 
left proper, cofibrantly generated 
model category on $\sSet(O)$ for 
which
\begin{itemize} 
\item the weak equivalences are the 
categorical equivalences in $\sSet(O)$; 
\item the cofibrations are the monomorphisms 
in $\sSet(O)$; and 
\item the fibrant objects are the 
$\infty$-categories whose 
set of objects is equal to $O$.  
\end{itemize}
\end{thmx}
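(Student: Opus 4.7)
The plan is to produce the model structure as a cofibrantly generated one by verifying the hypotheses of a standard recognition theorem (Hovey's Theorem~2.1.19, say), with Theorem~\ref{thm:C} supplying the decisive identification of trivial cofibrations. The category $\sSet(O)$ is locally presentable, being the strict fibre of the vertex-set functor $\sSet\to\mathrm{Set}$ over $O$; colimits of diagrams in $\sSet(O)$ are computed in $\sSet$ and automatically remain in $\sSet(O)$. Since every object of $\sSet(O)$ receives a canonical monomorphism from the discrete simplicial set $O$, all objects will be cofibrant, so left properness will be automatic once the model structure is established.

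I would first set up candidate generating sets. For each $n\geq 1$ and each function $\phi\colon\{0,\dots,n\}\to O$, let $\Delta^n_\phi$, $\partial\Delta^n_\phi$ and (when $n\geq 2$ and $0<i<n$) $\Lambda^n_{i,\phi}$ denote the objects of $\sSet(O)$ obtained from $\Delta^n$, $\partial\Delta^n$ and $\Lambda^n_i$ by gluing their vertices to $O$ along $\phi$. Take
\[
I=\{\partial\Delta^n_\phi\hookrightarrow\Delta^n_\phi : n\geq 1\}
\quad\text{and}\quad
J=\{\Lambda^n_{i,\phi}\hookrightarrow\Delta^n_\phi : n\geq 2,\ 0<i<n\}.
\]
A standard skeletal filtration identifies $I\text{-cof}$ with the class of all monomorphisms in $\sSet(O)$. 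Since morphisms in $\sSet(O)$ are automatically bijective on vertices, having the right lifting property against $I$ is equivalent to being a trivial Kan fibration in $\sSet$, and trivial Kan fibrations are categorical equivalences; so $I\text{-inj}\subseteq\calW$, where $\calW$ denotes the categorical equivalences in $\sSet(O)$.

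The heart of the argument is the comparison $I\text{-cof}\cap\calW=J\text{-cof}$. Every element of $J\text{-cof}$ is (the transport to $\sSet(O)$ of) an inner anodyne map of $\sSet$; by Theorem~\ref{thm:C} such a map is a monomorphism and a categorical equivalence, so $J\text{-cof}\subseteq I\text{-cof}\cap\calW$. The converse is where Theorem~\ref{thm:C} does the real work: a monomorphism in $\sSet(O)$ is automatically bijective on vertices, so such a map which is also a categorical equivalence is inner anodyne and hence lies in $J\text{-cof}$. Combined with 2-out-of-3 and closure under retracts for $\calW$, inherited from the corresponding properties in $\sSet$, the recognition theorem now delivers the cofibrantly generated model structure. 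The fibrations are by construction the maps with the right lifting property against $J$, which unwinds to the right lifting property against every inner horn $\Lambda^n_i\hookrightarrow\Delta^n$ in $\sSet$; in particular the fibrant objects are exactly the $\infty$-categories with vertex set $O$. The only genuine obstacle in this plan is Theorem~\ref{thm:C} itself; once it is available, the remaining verifications are routine.
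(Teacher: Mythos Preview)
Your proof is correct and follows essentially the same strategy as the paper: set up the evident generating sets in $\sSet(O)$, observe that the two weak factorization systems are in place, and invoke the characterization of inner anodyne maps to verify the one nontrivial compatibility condition. The only cosmetic difference is that you check $I\text{-cof}\cap\calW\subseteq J\text{-cof}$ directly from Theorem~\ref{thm:C}, whereas the paper checks the dual condition $\calF\cap\calW\subseteq\calF_{\calW}$ (an inner fibration in $\sSet(O)$ which is a categorical equivalence is a trivial Kan fibration) via Theorem~\ref{thm:main thm for inner fibs}, which is equivalent by Proposition~\ref{prop:equiv statements for ia}.
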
 

It turns out that the fibrations 
in this model structure are the inner 
fibrations in $\sSet(O)$.  The existence 
of this model structure explains 
the fact that inner anodyne maps 
satisfy the 2-out-of-3 property.  
We make use 
of it in our proof of 
Theorem~\ref{thm:B}.

To put this theorem into context,  
recall that in the paper \cite{DK}, Dwyer and 
Kan constructed a model structure 
on the category $\SCat(O)$ of 
simplicial categories with a 
fixed object set $O$.  
This model structure plays 
a role in the construction 
of the Bergner model 
structure on $\SCat$.  





We summarize the contents of this 
paper.  In Section~\ref{sec:categorical equivalences} 
we review some basic properties of categorical 
equivalences --- the weak equivalences in the 
Joyal model structure.  Everything in this section 
is well-known and due in its original form to 
Joyal.  We have included proofs of some of 
the statements here in an effort to make 
the construction of the Joyal model structure 
as self contained as possible (see 
Remark~\ref{rem:ingredients}).  In 
Section~\ref{sec:pre-fibrant simplicial sets} 
we introduce the concept of what we have 
called (for lack of a better name) {\em pre-fibrant} 
simplicial sets.  This is an ad-hoc device 
which we have introduced to counter the 
fact that the naive mapping spaces of a simplicial 
set $S$ do not in general have the homotopy type of the 
mapping spaces of the $\infty$-category 
generated by $S$ (see 
Remark~\ref{rem:naive mapping spaces}).  The key result that 
we prove here is Proposition~\ref{prop:key prop}, 
which shows that every pre-fibrant simplicial 
set $S$ freely generates an $\infty$-category 
whose mapping spaces are isomorphic to 
the naive mapping spaces of $S$.   

In Section~\ref{sec:descent} we study 
a notion of descent for inner fibrations.  
Our main result is 
Proposition~\ref{prop:descent pre-fibrant simp sets}.  
This states that given an inner fibration 
$p\colon X\to S$, there exists an 
inner anodyne map $S\to T$, where $T$ 
is a pre-fibrant simplicial set, and an inner 
fibration $q\colon Y\to T$ forming part 
of a pullback diagram 
\[
\begin{tikzcd} 
X \arrow[d,"p"] \arrow[r] & Y \arrow[d,"q"] \\ 
S \arrow[r] & T 
\end{tikzcd} 
\]
This result is useful when one wants to 
prove that the inner fibration $p\colon X\to S$ 
is a left fibration or a trivial fibration, as 
one can then translate this problem into the 
problem of proving that the inner fibration 
$q\colon Y\to T$ is a left fibration or a trivial 
fibration.  This problem is sometimes easier 
to solve since $T$ (and hence $Y$) are pre-fibrant 
simplicial sets.  In 
Section~\ref{sec:inner anodyne stuff} 
we apply these results to prove 
Theorem~\ref{thm:C} and 
establish the inner model structure 
(Theorem~\ref{thm:D}).  
In Section~\ref{sec:joyal model structure} 
we prove Theorem~\ref{thm:A} 
and Theorem~\ref{thm:B}, 
and establish the 
Joyal model structure (Theorem~\ref{thm:joyal nodel str}).  
Finally, in Sections~\ref{sec:technical lemmas} --~\ref{sec:proof of bergners lemma} 
we prove some technical results 
that are needed earlier in the paper.

\medskip 

\noindent
{\bf Notation}: for the most part we will 
adopt the notation and terminology from 
\cite{HTT}.  Thus $\sSet$ will denote the 
category of simplicial sets for instance.  
Typically $\infty$-categories will  
be denoted with calligraphic letters, $\mathcal{C}$, 
$\mathcal{D}$, \ldots etc, while simplicial 
sets will typically be denoted with capital 
Latin letters, $S$, $T$, \ldots etc.  We 
follow the standard convention of not 
distinguishing notationally between an ordinary 
category and its nerve.

\section{Categorical equivalences} 
\label{sec:categorical equivalences} 

In this section we recall the notion 
of categorical equivalences between simplicial 
sets and give some examples.  We prove 
that a map between $\infty$-categories  
is a categorical equivalence if and only 
if it is a Dwyer-Kan equivalence (Definition~\ref{def:DK equivalence}).  
Everything in this section is 
well-known and is due to Joyal and 
Lurie.  

\subsection{The homotopy category of a simplicial set} 
\label{subsec:hty cats} 
Recall (see Section 1.2.3 of \cite{HTT}) the 
functor $\h\colon \sSet\to \Cat$ which sends 
a simplicial set $S$ to its {\em homotopy category} 
$\h(S)$.  Recall also that an edge $e\colon x\to y$ 
in an $\infty$-category $\calC$ is said to be 
an {\em equivalence} if its image in the associated 
homotopy category $\h(\calC)$ is an isomorphism.  
More generally we will say that an edge 
$e\colon x\to y$ in a simplicial set $S$ is 
an equivalence if its image in $\h(S)$ is an 
isomorphism.  

An edge $e\colon x\to y$ in an $\infty$-category 
$\calC$ is an equivalence if and only if the 
$1$-simplex $e\colon \Delta^1\to \calC$ classifying 
$e$ extends along the canonical map $\Delta^1\to J$, 
where $J$ denotes the {\em groupoid interval} 
(i.e.\ the groupoid with two objects $0$ and $1$ 
and a unique isomorphism between them).
Clearly 
the existence of 
such an extension is a sufficient 
condition; to see that it 
is also necessary observe 
that if $e\colon \Delta^1\to 
\calC$ is an equivalence then 
$e$ factors through $\calC^{\simeq}$, 
the maximal Kan complex contained 
in $\calC$.  Since 
$\Delta^1\to J$ is anodyne 
it follows that the map 
$\Delta^1\to \calC^{\simeq}$ 
extends along the 
inclusion $\Delta^1\to J$.

\subsection{Equivalences of $\infty$-categories}
We recall the notion of 
an equivalence between $\infty$-categories 
and give some examples.   

\begin{definition} 
\label{def:equiv of oo-cats}
Let $\calC$ and $\calD$ be $\infty$-categories.  A map 
$f\colon \calC\to \calD$ is said to be an {\em equivalence} 
if there exists a map $g\colon \calD\to \calC$ such that 
there are equivalences $gf\to \id_{\calC}$ and $fg\to \id_{\calD}$ 
in the $\infty$-categories $\Fun(\calC,\calC)$ 
and $\Fun(\calD,\calD)$ respectively.  
\end{definition} 

Following Dugger and Spivak \cite{DS} 
we introduce the following notation.  

\begin{notation} 
We say that maps $f,g\colon S\to T$ in $\sSet$ 
are $J$-{\em homotopic} if there exists a 
map $H\colon S\times J\to T$ such that 
$Hi_0 = f$ and $Hi_1 = g$, where $i_0,i_1\colon 
S\to S\times J$ are the obvious inclusions.  
\end{notation}

\begin{remark} 
Thus $f\colon \calC\to \calD$ is an 
equivalence of $\infty$-categories if and only 
if it is a {\em $J$-homotopy equivalence} 
in the sense that there is a map $g\colon \calD\to \calC$ 
such that the composites $fg$ and $gf$ are 
$J$-homotopic to the respective identity maps.  
\end{remark}

\begin{lemma} 
\label{lem:hty equiv between Kan complexes}
Let $f\colon K\to L$ be a homotopy equivalence, 
where $K$ and $L$ are Kan complexes.  Then 
$f$ is an equivalence of $\infty$-categories.  
\end{lemma}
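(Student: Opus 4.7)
The plan is to leverage the discussion from Subsection~\ref{subsec:hty cats}, which says that an edge in an $\infty$-category is an equivalence if and only if it extends along the canonical map $\Delta^1\to J$. Combined with the remark that $f$ is an equivalence of $\infty$-categories if and only if it is a $J$-homotopy equivalence, the proof reduces to promoting the given $\Delta^1$-homotopies to $J$-homotopies.

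First I would fix a homotopy inverse $g\colon L\to K$ together with ordinary simplicial homotopies $H\colon K\times\Delta^1\to K$ from $gf$ to $\id_K$ and $H'\colon L\times\Delta^1\to L$ from $fg$ to $\id_L$. By adjunction, $H$ corresponds to an edge $\Delta^1\to \Fun(K,K)$ from $gf$ to $\id_K$, and similarly for $H'$.

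Next I would observe that $\Fun(K,K)$ and $\Fun(L,L)$ are Kan complexes, since $K$ and $L$ are Kan and the Kan model structure on $\sSet$ is cartesian. In a Kan complex every edge is an equivalence: its image in the homotopy category lands in a groupoid and is therefore an isomorphism. By the criterion recalled in Subsection~\ref{subsec:hty cats}, the edges classifying $H$ and $H'$ therefore extend along $\Delta^1\to J$ to maps $J\to \Fun(K,K)$ and $J\to \Fun(L,L)$. Transposing back, these produce maps $K\times J\to K$ and $L\times J\to L$ exhibiting $gf$ and $fg$ as $J$-homotopic to the respective identities, so $f$ is a $J$-homotopy equivalence and hence an equivalence of $\infty$-categories.

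There is no real obstacle here beyond assembling the ingredients; the only points that need care are the adjunction passage between homotopies and edges of function complexes, and the assertion that $\Fun(K,K)$ is a Kan complex when $K$ is, both of which are standard. An essentially equivalent route would be to apply the pushout–product property in the Kan model structure to the anodyne inclusion $\Delta^1\to J$ and the cofibration $\emptyset\to K$, concluding that $K\times\Delta^1\hookrightarrow K\times J$ is anodyne and extending $H$ directly against the Kan fibrancy of $K$; I would mention this as a parenthetical alternative.
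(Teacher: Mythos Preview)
Your proposal is correct and follows essentially the same approach as the paper: the key point in both is that $\Fun(K,K)$ and $\Fun(L,L)$ are Kan complexes, so the edges arising from the given homotopies are automatically equivalences. The paper's one-line proof stops there, invoking Definition~\ref{def:equiv of oo-cats} directly, whereas you take a small extra detour---extending along $\Delta^1\to J$ and transposing back to produce explicit $J$-homotopies---which is correct but unnecessary once you have equivalences in the function complexes.
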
 

\begin{proof} 
This follows immediately 
from the fact that every edge 
in a Kan complex is an 
equivalence.  
\end{proof}

\begin{example} 
\label{ex:0 <J cat equiv}
The inclusion $\set{0}\subseteq J$ 
is an equivalence. 
\end{example}

\subsection{Categorical equivalences}
We recall the definition and some properties of 
the class of categorical equivalences in $\sSet$.  

\begin{definition}[Joyal] 
A map $f\colon S\to T$ of simplicial sets is 
said to be a {\em categorical equivalence} 
if for any $\infty$-category $\calC$, the induced 
map 
\[
\Fun(T,\calC)\to \Fun(S,\calC) 
\]
is an equivalence of $\infty$-categories.  
\end{definition} 

\begin{remark} 
\label{rem:2 out of 3 cat equiv}
Clearly the class of categorical equivalences 
in $\sSet$ satisfies the 2-out-of-3 property. 
\end{remark} 

The following lemma is proved by a 
straightforward adjointness 
argument.  

\begin{lemma} 
If $f\colon \calC\to \calD$ is an equivalence 
between $\infty$-categories then 
$f$ is a categorical equivalence.  
\end{lemma}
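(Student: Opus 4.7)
The plan is to verify the defining property of a categorical equivalence directly: given an arbitrary $\infty$-category $\calE$, I would produce a $J$-homotopy inverse to the precomposition map $f^{*}\colon \Fun(\calD,\calE)\to \Fun(\calC,\calE)$. By the Remark following Definition~\ref{def:equiv of oo-cats}, exhibiting such a $J$-homotopy inverse is equivalent to showing that $f^{*}$ is an equivalence of $\infty$-categories, which (ranging over $\calE$) is exactly what it means for $f$ to be a categorical equivalence. Implicitly I am using the standard fact that $\Fun(\calC,\calE)$ and $\Fun(\calD,\calE)$ are themselves $\infty$-categories, since $\calE$ is.

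Using the $J$-homotopy formulation of equivalence again on the hypothesis, I obtain a map $g\colon \calD\to \calC$ together with $J$-homotopies $H\colon \calC\times J\to \calC$ witnessing $gf\simeq_{J}\id_{\calC}$ and $K\colon \calD\times J\to \calD$ witnessing $fg\simeq_{J}\id_{\calD}$. The candidate inverse is $g^{*}\colon \Fun(\calC,\calE)\to \Fun(\calD,\calE)$, so the task reduces to showing $f^{*}g^{*}=(gf)^{*}\simeq_{J}\id$ and $g^{*}f^{*}=(fg)^{*}\simeq_{J}\id$. Composition with $H$ gives a map $\Fun(\calC,\calE)\to \Fun(\calC\times J,\calE)$, and transposing across the cartesian-closure isomorphism $\Fun(\calC\times J,\calE)\cong \Fun(J,\Fun(\calC,\calE))$ produces the desired map $\Fun(\calC,\calE)\times J\to \Fun(\calC,\calE)$; its restrictions to the endpoints $0,1\in J$ are $(gf)^{*}$ and $\id$ respectively, by naturality of the exponential adjunction. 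The symmetric construction applied to $K$ handles the other composite.

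There is essentially no obstacle here; the argument is a purely formal unwinding of the internal hom adjunction in $\sSet$. The only point requiring momentary care is tracking which endpoint of $J$ corresponds to which composite under the two successive transpositions, but this is bookkeeping rather than a genuine difficulty.
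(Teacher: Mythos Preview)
Your argument is correct and is precisely the ``straightforward adjointness argument'' the paper alludes to without spelling out. You have filled in the details faithfully: transporting the $J$-homotopies $H$ and $K$ through the cartesian-closure isomorphism to produce $J$-homotopies witnessing that $g^{*}$ is inverse to $f^{*}$ is exactly what is meant.
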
 

\begin{lemma} 
If $f\colon S\to T$ is inner anodyne then $f$ is a categorical 
equivalence.  
\end{lemma}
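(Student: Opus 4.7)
The plan is to show that if $f\colon S\to T$ is inner anodyne, then for every $\infty$-category $\calC$ the restriction map $f^{*}\colon \Fun(T,\calC)\to \Fun(S,\calC)$ is a trivial fibration of simplicial sets. Since $\calC$ is an $\infty$-category, both $\Fun(T,\calC)$ and $\Fun(S,\calC)$ are $\infty$-categories, so the result will follow once we observe that any trivial fibration between $\infty$-categories is an equivalence of $\infty$-categories: $f^{*}$ is then such an equivalence for every $\calC$, which is precisely the definition of $f$ being a categorical equivalence.

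To see that $f^{*}$ is a trivial fibration, I would verify that it has the right lifting property against every monomorphism $A\hookrightarrow B$. Under the cartesian-product/internal-hom adjunction, this lifting problem is equivalent to an extension problem into $\calC$ for the pushout--product map
\[
(A\times T)\cup_{A\times S}(B\times S)\longrightarrow B\times T
\]
formed from $A\hookrightarrow B$ and $f$. Such an extension exists whenever this pushout--product is inner anodyne. Thus it suffices to establish the following pushout-product lemma: if $i$ is a monomorphism and $g$ is inner anodyne, then the pushout-product of $i$ and $g$ is inner anodyne. The class of monomorphisms $i$ for which this holds for every inner anodyne $g$ is easily seen to be saturated (being defined by a lifting condition), so by the small object argument it is enough to treat the case where $i$ is a generating boundary inclusion $\partial\Delta^m\hookrightarrow \Delta^m$ and $g$ is an inner horn inclusion $\Lambda^n_k\hookrightarrow \Delta^n$ with $0<k<n$. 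This is the classical combinatorial fact proved by an explicit filtration of the non-degenerate simplices of $\Delta^m\times \Delta^n$.

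For the remaining input --- that a trivial fibration $p\colon X\to Y$ between $\infty$-categories is an equivalence of $\infty$-categories --- I would argue as follows. A section $s\colon Y\to X$ with $ps=\id_Y$ exists by the right lifting property of $p$ against the monomorphism $\emptyset\hookrightarrow Y$. To produce a $J$-homotopy $sp\simeq \id_X$, consider the lifting problem whose left-hand map is $X\times \partial\Delta^1 \hookrightarrow X\times J$, whose top map is $(sp,\id_X)\colon X\sqcup X\to X$, and whose bottom map is the composite of the projection $X\times J\to X$ with $p$. The square commutes because $psp=p$, and a lift exists because $p$ is a trivial fibration; this lift is the desired $J$-homotopy. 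Combined with $ps=\id_Y$, this shows $p$ is a $J$-homotopy equivalence, hence an equivalence of $\infty$-categories. The main obstacle in the argument is the pushout-product lemma for inner anodyne maps against monomorphisms; everything else is formal adjointness and a short explicit lifting.
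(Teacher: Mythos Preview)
Your proposal is correct and follows essentially the same approach as the paper: the paper's one-line proof simply cites Corollary~2.3.2.5 of \cite{HTT}, which is precisely the pushout--product lemma (monomorphism $\times$ inner anodyne $\Rightarrow$ inner anodyne) that you identify as the key ingredient, and the remaining adjointness and trivial-fibration arguments you spell out are exactly what is implicit in that citation. The paper also records separately (immediately after this lemma) that trivial Kan fibrations are categorical equivalences, which matches your final step.
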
 

\begin{proof} 
This follows immediately from 
Corollary 2.3.2.5 of \cite{HTT}. 
\end{proof} 

The following lemma is well-known 
and easy to prove.  

\begin{lemma} 
If $p\colon X\to Y$ is a trivial Kan fibration then 
$p$ is a categorical equivalence.  
\end{lemma}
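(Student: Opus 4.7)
My plan is to show that for every $\infty$-category $\calC$, the induced map $p^*\colon\Fun(Y,\calC)\to\Fun(X,\calC)$ is a $J$-homotopy equivalence; by the remark following Definition~\ref{def:equiv of oo-cats} this makes $p^*$ an equivalence of $\infty$-categories, and, since $\calC$ is arbitrary, shows that $p$ is a categorical equivalence.

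First I would exploit that $p$, being a trivial Kan fibration, has the right lifting property against every monomorphism. Lifting against $\emptyset\hookrightarrow Y$ produces a section $s\colon Y\to X$ with $ps=\id_Y$, which yields $s^*p^* = (ps)^* = \id_{\Fun(Y,\calC)}$. For the other composite, I would construct a $J$-homotopy $H\colon X\times J\to X$ between $sp$ and $\id_X$ as a diagonal filler in the square whose top arrow $X\times\partial\Delta^1 = X\sqcup X\to X$ is $sp$ on the first copy and $\id_X$ on the second, and whose bottom arrow $X\times J\to Y$ is $p\circ\pi_X$. The square commutes because $psp=p$, the left vertical is a monomorphism, and so the trivial Kan fibration $p$ provides the required lift.

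To pass this to $\Fun(X,\calC)$, I would apply $\Fun(-,\calC)$ to $H$ and use the canonical isomorphism $\Fun(X\times J,\calC)\cong\Fun(J,\Fun(X,\calC))$ in the Cartesian closed category $\sSet$, extracting a $J$-homotopy $\Fun(X,\calC)\times J\to\Fun(X,\calC)$ from $(sp)^* = p^*s^*$ to $\id_{\Fun(X,\calC)}$. Combined with the identity $s^*p^* = \id_{\Fun(Y,\calC)}$, this exhibits $p^*$ as a $J$-homotopy equivalence between the $\infty$-categories $\Fun(Y,\calC)$ and $\Fun(X,\calC)$, completing the argument.

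The only genuinely non-formal step is the construction of $H$ via the lifting property, and I do not anticipate any obstacle there; the remainder is standard adjunction bookkeeping.
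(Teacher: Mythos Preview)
Your argument is correct and is exactly the standard proof: use the lifting property of a trivial Kan fibration to produce a section $s$ and a $J$-homotopy $sp\simeq\id_X$, then transport this via cartesian closedness to see that $p^*$ is a $J$-homotopy equivalence for every $\infty$-category $\calC$. The paper itself omits the proof, declaring the lemma ``well-known and easy to prove,'' so there is nothing further to compare.
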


\begin{notation} 
We recall the following notation from 
\cite{JT}: if $\calC$ is an $\infty$-category 
and $K$ is a simplicial set then we write 
$\calC^{(K)}$ for the {\em full} simplicial subset 
(see Remark~\ref{rem:full simplicial subset} below) 
of the $\infty$-category $\calC^K$ whose vertices 
are the maps $K\to \calC$ which factor 
through the maximal Kan complex $\calC^{\simeq}$.  
\end{notation} 

\begin{remark}
\label{rem:full simplicial subset}
Here we recall 
that a full simplicial subset $T\subseteq S$ 
of a simplicial set $S$ 
is determined by the subset $T_0\subseteq 
S_0$ of $0$-simplices, in the sense 
that there is a pullback diagram 
\[
\begin{tikzcd} 
T \arrow[d] \arrow[r] & 
S \arrow[d]            \\ 
\cosk_0T_0 \arrow[r,hook] 
& \cosk_0S 
\end{tikzcd} 
\]
Clearly every full simplicial 
subset $T\subseteq S$ is an 
inner fibration.     
\end{remark} 

The following example is a 
special case of a more general 
result due to Joyal (see Theorem 5.10 
of \cite{J2}).  

\begin{example} 
\label{ex:0 <J induces tkf}
Let $\calC$ be an $\infty$-category.  
Then the inclusion $\set{0}\subset \Delta^1$ 
induces a trivial Kan fibration 
$\calC^{(\Delta^1)}\to \calC^{\set{0}}$.  
To see this it 
suffices to prove that the 
indicated diagonal filler 
exists in every commutative 
diagram of the form 
\[
\begin{tikzcd} 
\partial\Delta^n \arrow[r] \arrow[d] &
\calC^{(\Delta^1)} \arrow[d]         \\ 
\Delta^n \arrow[r] 
\arrow[ur,dashed] & 
\calC^{\set{0}} 
\end{tikzcd} 
\]
for $n\geq 1$ (the existence of such 
fillers being clear when $n=0$).  
By adjointness, the existence of 
such a diagonal filler is equivalent 
to the existence of an extension 
of the induced map $f\colon \partial\Delta^n 
\times \Delta^1 \cup \Delta^n\times 
\set{0}\to \calC$ along the inclusion 
$\partial\Delta^n\times \Delta^1 \cup 
\Delta^n\times\set{0}\subseteq 
\Delta^n\times \Delta^1$.  By assumption, 
for every vertex $i$ of $\Delta^n$, 
the restriction $f\colon \set{i}\times \Delta^1 
\to \calC$ is an equivalence in the $\infty$-category 
$\calC$.  It is a well known fact 
(see for instance the proof of 
Proposition 2.1.2.6 in \cite{HTT}) 
that there exists a sequence of 
inclusions 
\[
X(n+1) \subseteq X(n) \subseteq \cdots 
\subseteq X(1)\subseteq X(0) 
\]
with $X(n+1) = \partial\Delta^n
\times \Delta^1\cup \Delta^n\times 
\set{0}$ and $X(0) = \Delta^n
\times \Delta^1$, and where each 
$X(k)$ is obtained from $X(k+1)$ 
as a pushout of the form 
\[
X(k) = X(k+1)\cup_{\Lambda^{n+1}_k}
\Delta^{n+1}.  
\]
Since the inclusion $X(n+1)\subseteq 
X(1)$ is inner anodyne we may choose 
an extension $\tilde{f}\colon X(1)
\to \calC$ of $f$.  Since the 
edge $f\colon \set{0}\times \Delta^1
\to \calC$ is an equivalence in $\calC$ 
it follows from Theorem 1.3 of \cite{J1} 
that $\tilde{f}$ extends 
along the inclusion $X(1)\subseteq 
X(0)$.   
\end{example}

\begin{remark} 
The class of categorical equivalences in $\sSet$ is stable 
under filtered colimits. 
\end{remark}

\subsection{Dwyer-Kan equivalences} 

If $\calC$ is an $\infty$-category 
and $x,y$ are objects of $\calC$ then 
there is a mapping space $\Map_{\calC}(x,y)$ of 
morphisms from $x$ to $y$.  The space 
$\Map_{\calC}(x,y)$ is well-defined as 
an object of $\calH$, the homotopy 
category of spaces.  In \cite{HTT} 
Lurie defines several models for the homotopy 
type $[\Map_{\calC}(x,y)]$ in $\calH$.  
We shall make use of the model described 
in terms of left morphisms from $x$ to 
$y$ in $\calC$.  

Let $S$ be a simplicial set and let 
$x$ and $y$ be vertices of $S$.  Recall 
(see Section 1.2.2 of \cite{HTT}) that the   
simplicial set $\Hom^L_{S}(x,y)$ of {\em left morphisms} 
from $x$ to $y$ in $S$, is the simplicial 
set whose set of $n$-simplices is the 
set of all maps $u\colon \Delta^{n+1}\to S$ 
such that $u(0) = x$ and $u|\Delta^{\set{1,\ldots,n}}$ 
is equal to the constant $n$-simplex on the 
vertex $y$.  The face and degeneracy 
maps are induced from those of $S$ in 
the obvious way.  In other words, 
\[
\Hom^L_{S}(x,y) = S_{x/}\times_{S} \set{y}.  
\]
It is an important fact that when 
$\calC$ is an $\infty$-category, the 
simplicial set $\Hom^L_{\calC}(x,y)$ is a 
Kan complex which presents the 
mapping space $\Map_{\calC}(x,y)$ of morphisms 
in $\calC$ from $x$ to $y$.  

\begin{remark}
\label{rem:naive mapping spaces} 
One should beware that if $S$ is a simplicial 
set which presents an $\infty$-category 
$\calC$ in the sense that there is a 
categorical equivalence $S\to \calC$, 
then the naive mapping space $\Hom^L_{S}(x,y)$ 
need not have the 
homotopy type of $\Map_{\calC}(x,y)$. 
\end{remark}

\begin{definition}
\label{def:DK equivalence}
Let $\calC$ and $\calD$ be $\infty$-categories.  
A map $f\colon \calC\to \calD$ is said to be 
\begin{enumerate}[label=(\roman*)]
\item {\em essentially surjective} if the functor 
$\h(f)\colon \h(\calC)\to \h(\calD)$ is essentially 
surjective.  

\item{\em fully faithful} if the induced map 
\[
\Hom^L_{\calC}(C,C')\to \Hom^L_{\calD}(f(C),f(C')) 
\]
is a homotopy equivalence for every pair of 
vertices $C,C'\in \calC$.  
\end{enumerate} 
If $f\colon \calC\to \calD$ is fully faithful 
and essentially surjective then we will say that 
$f$ is a {\em Dwyer-Kan equivalence}.  
\end{definition} 

We will show in Proposition~\ref{prop:ff + ess surj iff equiv} 
below that a map between $\infty$-categories 
is an equivalence if and only if it is 
a Dwyer-Kan equivalence.  We begin by proving 
that every equivalence between $\infty$-categories 
is fully faithful (it is clear that every 
such equivalence is essentially surjective).

\begin{lemma} 
Let $f\colon \calC\to \calD$ be an equivalence 
between $\infty$-categories $\calC$ and $\calD$.  
Then $f$ is fully faithful.  
\end{lemma}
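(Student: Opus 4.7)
Let $g\colon \calD \to \calC$ be a $J$-homotopy inverse to $f$, with $J$-homotopies $H\colon \calC \times J \to \calC$ from $gf$ to $\id_\calC$ and $H'\colon \calD \times J \to \calD$ from $fg$ to $\id_\calD$. The plan is to show that
\[
f_*\colon \Hom^L_\calC(C,C') \to \Hom^L_\calD(fC,fC')
\]
is a homotopy equivalence of Kan complexes for each pair $C, C' \in \calC$, by exhibiting a homotopy inverse built from $g$, $H$, and $H'$.

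First I would extract from the restriction of $H$ along $\calC \times \Delta^1 \hookrightarrow \calC \times J$ a family of 1-simplices $\alpha_C\colon gfC \to C$ in $\calC$, which are equivalences in $\calC$ since by construction they extend along $\Delta^1 \hookrightarrow J$; similarly obtain equivalences $\beta_D\colon fgD \to D$ from $H'$. For any $n$-simplex of $\Hom^L_\calC(C,C')$ represented by $u\colon \Delta^{n+1} \to \calC$, the composite
\[
\Delta^{n+1} \times J \xrightarrow{u \times \id_J} \calC \times J \xrightarrow{H} \calC
\]
is a $J$-homotopy from $gf \circ u$ to $u$. The data of this homotopy should realize the composite $g_* \circ f_*\colon \Hom^L_\calC(C,C') \to \Hom^L_\calC(gfC,gfC')$ as simplicially homotopic to the map obtained by pre- and post-composing with the equivalences $\alpha_C$ and $\alpha_{C'}$. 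The symmetric discussion with $H'$ shows that $f_* \circ g_*$ is homotopic to pre- and post-composition by $\beta_{fC}$ and $\beta_{fC'}$ on $\Hom^L_\calD(fC, fC')$.

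Second, I would argue that pre- and post-composition by equivalences induces a homotopy equivalence on $\Hom^L$. In effect, if $\alpha\colon x' \to x$ and $\alpha'\colon y \to y'$ are equivalences in an $\infty$-category $\calE$, then the induced map $\Hom^L_\calE(x,y) \to \Hom^L_\calE(x', y')$ is a homotopy equivalence of Kan complexes, and an explicit two-sided homotopy inverse can be built out of the $J$-extensions of $\alpha$ and $\alpha'$ guaranteed by the characterization of equivalences discussed in Section~\ref{subsec:hty cats}. Combining the two steps then shows that both $g_* f_*$ and $f_* g_*$ are homotopy equivalences on the respective $\Hom^L$, from which it follows that $f_*$ is itself a homotopy equivalence.

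The main obstacle will be making the second step precise: formalizing the action of composition with an equivalence on $\Hom^L$ and showing it is a homotopy equivalence. The paper's framework offers a natural route via Example~\ref{ex:0 <J induces tkf}, but some careful bookkeeping with the slice definition $\Hom^L_S(x,y) = S_{x/} \times_S \{y\}$ will be needed to translate $J$-homotopies of the ambient maps into simplicial homotopies on these Kan complexes.
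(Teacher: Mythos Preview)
Your outline is plausible and, at the very end, converges with the paper's argument: both reduce to showing that $gf$ and $fg$ are fully faithful and then invoke (what is in effect) the 2-out-of-6 property to conclude that $f_*$ is a homotopy equivalence. The substantive difference lies in how one establishes that $gf$ is fully faithful.

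You attempt this by translating the ambient $J$-homotopy $H$ into a simplicial homotopy between $g_*f_*$ and a ``pre/post-compose with $\alpha_C,\alpha_{C'}$'' map on $\Hom^L$, and then arguing separately that composition with an equivalence is a homotopy equivalence on $\Hom^L$. Both of these steps require you to make sense of composition on the $\Hom^L$ model at the level of simplicial sets, and you correctly flag this as the main obstacle. It is not a trivial obstacle: there is no strict composition on $\Hom^L$, so even stating your first step precisely already needs the machinery you are deferring to the second.

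The paper sidesteps this entirely. It first observes, by a direct slice argument, that any trivial Kan fibration is fully faithful. It then notes that the $J$-homotopy $H$ yields a map $h\colon \calC\to \calC^{(\Delta^1)}$ fitting into a diagram
\[
\begin{tikzcd}
& \calC \\
\calC \arrow[ur,"\id_\calC"] \arrow[r,"h"] \arrow[dr,"gf"'] & \calC^{(\Delta^1)} \arrow[u] \arrow[d] \\
& \calC
\end{tikzcd}
\]
whose vertical arrows are the trivial Kan fibrations of Example~\ref{ex:0 <J induces tkf}. Since those are fully faithful, 2-out-of-3 for homotopy equivalences gives that $gf$ is fully faithful; symmetrically for $fg$. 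No composition on $\Hom^L$ is ever defined or used. In short, the paper packages your ``composition with an equivalence is invertible'' step into the single global statement that $\calC^{(\Delta^1)}\to\calC$ is a trivial Kan fibration, and thereby avoids all of the bookkeeping you anticipate.
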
 

\begin{proof} 
Suppose first that $f\colon \calC\to \calD$ is a 
trivial Kan fibration.  Then 
for any vertex $C\in \calC$ the induced map
\[
\calC_{C/}\to \calD_{f(C)/}\times_{\calD}\calC 
\]
is a trivial Kan fibration.  Therefore, for 
any vertex $C'\in \calC$, the map 
\[
\Hom^L_{\calC}(C,C')\to \Hom^L_{\calD}(f(C),f(C')) 
\]
is also a trivial Kan fibration.   
Hence $f$ is fully faithful in this case.  

Suppose now that $f\colon \calC\to \calD$ is 
an arbitrary equivalence between $\infty$-categories 
$\calC$ and $\calD$.  Let $g\colon \calD\to \calC$ be an 
inverse equivalence for $f$ 
as in Definition~\ref{def:equiv of oo-cats}.  
Let $h\colon \calC\to 
\calC^{J}$ be a homotopy witnessing the relation 
$gf=\id_{\calC}$ in $\h(\Fun(\calC,\calC))$; by an 
abuse of notation write $h$ also for the composite 
with the canonical projection $\calC^J \to \calC^{(\Delta^1)}$.  
We have a commutative diagram 
\[
\begin{tikzcd} 
& \calC \\ 
\calC \arrow[ur,"{\id_{\calC}}"] \arrow[r,"h"] 
\arrow[dr,"gf"'] & \calC^{(\Delta^1)} \arrow[u] \arrow[d]       \\
& \calC  
\end{tikzcd} 
\]
in which the vertical arrows are trivial Kan fibrations 
by Example~\ref{ex:0 <J induces tkf}.  
It follows from the discussion above that $gf$ is fully faithful.  
By symmetry, $fg$ is fully faithful.    

Let $C,C'\in \calC$ be vertices.  We have an induced 
diagram 
\[
\begin{tikzcd} 
\Hom^L_{\calC}(C,C') \arrow[r,"f"] \arrow[d] & 
\Hom^L_{\calD}(f(C),f(C')) \arrow[dl,"g"'] \arrow[d] \\ 
\Hom^L_{\calC}(gf(C),gf(C')) \arrow[r,"f"] 
& \Hom^L_{\calD}(fgf(C),fgf(C')) 
\end{tikzcd} 
\]
The vertical arrows are homotopy equivalences since 
$gf$ and $fg$ are fully faithful.  Therefore, by the 
2-out-of-6 property for homotopy equivalences, it 
follows that the map 
\[
\Hom^L_{\calC}(C,C')\to \Hom^L_{\calD}(f(C),f(C')) 
\]
is a homotopy equivalence.  Hence $f$ is fully faithful. 
\end{proof}

\begin{remark} 
More generally we can make sense of what 
it means for a map of simplicial sets to 
be essentially surjective or fully faithful 
(the above definitions are invariant under 
equivalences of $\infty$-categories).  
\end{remark} 

We would now like to prove that every 
Dwyer-Kan equivalence $f\colon \calC\to \calD$  
is a categorical equivalence.  
For this we shall need to isolate 
a particular class of inner fibrations 
between $\infty$-categories.  
With the benefit of hindsight 
(see Corollary 2.4.6.5 of \cite{HTT})
we make the following definition.     

\begin{definition} 
We will say that an inner fibration 
$p\colon \calC\to \calD$ 
between $\infty$-categories 
is a {\em categorical fibration} if the functor 
$\h(p)\colon \h(\calC)\to \h(\calD)$ 
is an isofibration of categories.  
\end{definition} 

\begin{remark}
An inner fibration $p\colon \calC
\to \calD$ between $\infty$-categories 
is a categorical fibration if and only if it has the 
right lifting property with respect 
to the inclusion $\set{0}\subseteq J$, 
if and only if the canonical map 
$\calC^{(\Delta^1)}\to 
\calC\times_{\calD}\calD^{(\Delta^1)}$ 
induced by the inclusion 
$\Delta^{\set{0}}\subseteq \Delta^1$ 
is surjective.  
\end{remark} 

\begin{lemma}[Joyal] 
\label{lem:ps fibn + ce implies tkf}
Suppose $p\colon \calC\to \calD$ is a 
categorical fibration between $\infty$-categories.  
If $p$ is fully faithful and 
essentially surjective then 
$p$ is a trivial Kan fibration.  
\end{lemma}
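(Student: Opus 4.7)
The plan is to check that $p$ satisfies the right lifting property with respect to every boundary inclusion $\partial\Delta^n \subseteq \Delta^n$, which characterizes the trivial Kan fibrations. For $n = 0$, given $d \in \calD_0$, essential surjectivity of $p$ provides $c \in \calC_0$ and an equivalence $e\colon p(c) \to d$ in $\calD$; extending $e$ along the anodyne inclusion $\Delta^1 \subseteq J$ yields a map $\bar{e}\colon J \to \calD$, and since $p$ is a categorical fibration it has the right lifting property against $\set{0} \subseteq J$, so the lift $J \to \calC$ starts at $c$ and ends at a vertex $c'$ satisfying $p(c') = d$.

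For the remaining cases I would first establish the key intermediate fact: for all $x, y \in \calC_0$ the induced map on left mapping spaces
\[
p_{x,y}\colon \Hom^L_\calC(x, y) \to \Hom^L_\calD(p(x), p(y))
\]
is a trivial Kan fibration. Since $p$ is an inner fibration, the canonical map $\calC_{x/} \to \calD_{p(x)/}\times_\calD \calC$ is a left fibration (a standard pushout--product argument against left anodyne inclusions); taking the fiber over $(p(y), y)$ exhibits $p_{x,y}$ as a left fibration between Kan complexes, hence a Kan fibration. Fully faithfulness says exactly that $p_{x,y}$ is a weak equivalence, and a Kan fibration between Kan complexes that is a weak equivalence is a trivial Kan fibration. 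The case $n = 1$ follows immediately: an edge $e\colon p(x) \to p(y)$ in $\calD$ is a $0$-simplex of the codomain which lifts through $p_{x,y}$ to a $0$-simplex of $\Hom^L_\calC(x,y)$, i.e.\ an edge $\tilde{e}\colon x \to y$ with $p(\tilde{e}) = e$.

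For $n \geq 2$, given $\sigma_0\colon \partial\Delta^n \to \calC$ and $\tau\colon \Delta^n \to \calD$ extending $p\sigma_0$, I apply the inner fibration property of $p$ to $\sigma_0|_{\Lambda^n_1}$ to produce an $n$-simplex $\sigma'\colon \Delta^n \to \calC$ with $p\sigma' = \tau$ and $d_j\sigma' = d_j\sigma_0$ for $j \neq 1$. The face $d_1\sigma'$ may differ from the prescribed face $d_1\sigma_0$, but both are $(n-1)$-simplices of $\calC$ with common boundary and common image $d_1\tau$ in $\calD$. Since the appropriate fibers of $p_{x,y}$, reinterpreted in the slice $\calC_{\sigma_0(0)/}$ (which inherits the categorical fibration, fully faithful, and essentially surjective properties), are contractible Kan complexes, there is a path connecting $d_1\sigma'$ and $d_1\sigma_0$ relative to their common boundary and lying over a degenerate simplex of $\calD$. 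Assembling $\sigma'$ with this path as faces of an inner horn $\Lambda^{n+1}_k \to \calC$ whose image in $\calD$ is a degeneracy of $\tau$, and filling using the inner fibration property of $p$, yields an $(n+1)$-simplex whose $d_k$-face is the desired $\sigma$ with $d_j\sigma = d_j\sigma_0$ for all $j$ and $p\sigma = \tau$.

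The main obstacle is precisely the combinatorial bookkeeping of this last step: identifying the faces of the inner horn with $\sigma'$, the rectifying path, and suitable degeneracies, choosing the correct index $k$, and verifying that the filler yields an $n$-simplex with the prescribed boundary and image under $p$. The geometric intuition---a homotopy between two fillings of a sphere, together with one of those fillings, forms the boundary of a higher simplex---is transparent, but making it precise at the level of simplicial face relations compatible with $p$ is where care is required.
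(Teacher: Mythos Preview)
Your treatment of $n=0$ and $n=1$ is correct and essentially matches the paper. Your ``key intermediate fact'' that $p_{x,y}$ is a trivial Kan fibration is also the heart of the paper's argument. The difference is in how this fact is deployed for $n\geq 2$.

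The paper does not attempt the horn-filling-and-correction maneuver you sketch. Instead it observes that, via the join--slice adjunction, the lifting problem
\[
\begin{tikzcd}
\partial\Delta^n \arrow[r,"u"] \arrow[d] & \calC \arrow[d,"p"] \\
\Delta^n \arrow[r,"v"] & \calD
\end{tikzcd}
\]
is equivalent to the lifting problem
\[
\begin{tikzcd}
\partial\Delta^{\{1,\ldots,n\}} \arrow[r] \arrow[d] & \calC_{u(0)/} \arrow[d] \\
\Delta^{\{1,\ldots,n\}} \arrow[r] & \calD_{v(0)/}\times_{\calD}\calC
\end{tikzcd}
\]
against the induced slice map. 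This map is a left fibration (Proposition 2.1.2.1 of \cite{HTT}), and its fiber over any vertex $y$ is exactly your $p_{u(0),y}$, which has contractible fibers by full faithfulness. A left fibration with contractible fibers is a trivial Kan fibration (Lemma 2.1.3.4 of \cite{HTT}), so the lift exists immediately. No inner-horn assembly, no inductive correction of a stray face.

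Your approach for $n\geq 2$ is not wrong in spirit, but the step you flag as the ``main obstacle'' really is one: producing the rectifying homotopy rel boundary and packaging it with $\sigma'$ into a single inner horn whose filler has the correct $p$-image is an inductive argument in its own right (you are essentially re-proving that the relative-boundary mapping space has the right connectivity, which amounts to re-deriving the slice statement the paper uses). The paper's reformulation via slices absorbs all of that bookkeeping into the single citation of Lemma 2.1.3.4. I would recommend you adopt the slice reformulation; it is both shorter and avoids the combinatorics you were worried about.
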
 

\begin{proof} 
Suppose $p$ is fully 
faithful and essentially surjective.  
Since $p$ has the right lifting property 
against the inclusion $\set{0}\subseteq J$ 
it follows that $p$ is surjective.  

We prove that for every $n\geq 1$ the indicated 
diagonal filler exists in every commutative 
diagram of the form 
\[
\begin{tikzcd} 
\partial\Delta^n\arrow[d] \arrow[r,"u"] & 
\calC \arrow[d,"p"]                      \\ 
\Delta^n\arrow[r,"v"] & \calD.  
\end{tikzcd} 
\]
By an adjointness argument this is 
equivalent to proving that the 
indicated diagonal filler exists 
in the induced diagram 
\begin{equation}
\label{eq:induced diag1}
\begin{tikzcd} 
\partial\Delta^{\set{1,\ldots, n}} 
\arrow[r] \arrow[d] & \calC_{u(0)/} \arrow[d] \\ 
\Delta^{\set{1,\ldots, n}} \arrow[r] \arrow[ur,dashed]
& \calD_{v(0)/}\times_{\calD} \calC 
\end{tikzcd} 
\end{equation} 
The induced map 
\begin{equation}
\label{eq:slice left fibn} 
\calC_{u(0)/}\to \calD_{v(0)/}\times_{\calD}\calC 
\end{equation} 
is a left fibration (Proposition 2.1.2.1 of \cite{HTT}).  
Since $p$ is fully faithful 
it follows that the fibers 
of~\eqref{eq:slice left fibn} are contractible.  
Hence~\eqref{eq:slice left fibn} is a trivial 
Kan fibration (Lemma 2.1.3.4 of \cite{HTT}).  
Hence the indicated lift 
exists in the diagram ~\eqref{eq:induced diag1}.  
It follows that $p$ is a 
trivial Kan fibration.  
\end{proof}

The following example appears 
as Proposition 5.16 in 
\cite{J2}.  The labored account 
that we give of it here is 
an unfortunate consequence 
of our efforts to keep the paper 
self-contained.

\begin{example}  
Let $\calC$ be an $\infty$-category.  The 
canonical map $\calC^{(\Delta^1)}\to \calC\times \calC$ 
induced by the inclusion $\set{0,1}\subseteq 
\Delta^1$ is an inner fibration by Corollary 
2.3.2.5 of \cite{HTT}.  In fact this canonical map 
is also a categorical fibration.  
To see this, suppose given a 
commutative diagram of the 
form 
\[
\begin{tikzcd} 
\set{0} \arrow[r,"u"] \arrow[d] & \calC^{(\Delta^1)} \arrow[d] \\ 
\Delta^1 \arrow[ur,dashed] \arrow[r,"{(f,g)}"'] & 
\calC\times \calC 
\end{tikzcd} 
\] 
in which the edges $f$ and $g$ are 
equivalences in the $\infty$-category $\calC$.  
We must prove that the indicated diagonal 
filler exists.  The existence of such a 
diagonal filler is equivalent to the 
existence of a map $\phi\colon \Delta^1\times 
\Delta^1\to \calC$ such that 
the restriction $\phi|\Delta^{\set{0}}
\times \Delta^1$ is equal to $u$, 
the restriction $\phi|\Delta^1\times 
\Delta^{\set{0}}$ is equal to $f$, and 
the restriction $\phi|\Delta^1\times \Delta^{\set{1}}$ is 
equal to $g$.  The existence of such a map 
$\phi$ is easily proven using Proposition 1.2.4.3 
of \cite{HTT} (notice the restriction 
$\phi|\Delta^{\set{1}}\times \Delta^1$ 
is necessarily an equivalence in $\calC$, so that $\phi$ 
corresponds to a map $\Delta^1\to \calC^{(\Delta^1)}$).  
\end{example} 


\begin{example} 
\label{ex:mapping path space}
(Mapping path space construction).
Given a map $f\colon \calC\to \calD$ 
between $\infty$-categories 
$\calC$ and $\calD$ we define the 
$\infty$-category $Q(f)$ by the 
pullback diagram 
\[
\begin{tikzcd} 
Q(f) \arrow[r] \arrow[d] 
& \calC\times \calD \arrow[d,"{f\times \id_{\calD}}"] \\ 
\calD^{(\Delta^1)} \arrow[r] & 
\calD\times \calD 
\end{tikzcd} 
\]
It follows from the discussion 
in the paragraph above that 
the composite map 
$Q(f)\to \calC\times \calD 
\to \calD$ is a categorical fibration.  
Denote 
this composite map by 
$\pi\colon Q(f)\to \calD$. 
The diagonal map $\calD\to \calD^{(\Delta^1)}$ 
induces a map $i\colon \calC\to Q(f)$ which 
is right inverse to the trivial 
Kan fibration given by the composite 
map $Q(f)\to \calC\times \calD\to \calC$.  
It follows that $i$ is a 
categorical equivalence.  
The factorization $f = \pi i$ is called 
the {\em mapping path space} factorization 
of $f$.  
\end{example} 

\begin{proposition}[Joyal] 
\label{prop:ff + ess surj iff equiv}
Let $\calC$ and $\calD$ be $\infty$-categories.  
A map $f\colon \calC\to \calD$ is an equivalence 
if and only if it is a Dwyer-Kan 
equivalence.  
\end{proposition}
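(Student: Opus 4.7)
The easy direction, that equivalence of $\infty$-categories implies Dwyer-Kan equivalence, follows by combining the preceding lemma (equivalence $\Rightarrow$ fully faithful) with the observation that a $J$-homotopy inverse $g$ to $f$ descends to an inverse of $\h(f)$ in $\Cat$, so that $f$ is essentially surjective.

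For the converse the plan is to apply the mapping path space factorization $f = \pi \circ i$ from Example~\ref{ex:mapping path space}. First I would verify that $i\colon \calC \to Q(f)$ is itself an equivalence of $\infty$-categories: it is a section of a trivial Kan fibration $r\colon Q(f) \to \calC$, and a $J$-homotopy $i \circ r \sim \id_{Q(f)}$ lying over the constant $J$-homotopy at $r$ can be built by solving the obvious lifting problem of the monomorphism $Q(f) \times \partial \Delta^1 \hookrightarrow Q(f) \times J$ against the trivial Kan fibration $r$. The same lifting argument shows more generally that any trivial Kan fibration between $\infty$-categories is an equivalence of $\infty$-categories, a fact I will reuse below.

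Next I would show that $\pi \colon Q(f) \to \calD$ is a Dwyer-Kan equivalence. Essential surjectivity is immediate from $\pi \circ i = f$ together with essential surjectivity of $f$. For full faithfulness, I would use essential surjectivity of $i$ to replace an arbitrary pair $(p,q)$ of objects of $Q(f)$ by a pair of the form $(i(c), i(c'))$ up to equivalence, and then invoke the 2-out-of-3 property for homotopy equivalences of Kan complexes in the factorization $f_* = \pi_* \circ i_*$, where $i_*$ is a homotopy equivalence by the first step (so $i$ is fully faithful by the preceding lemma) and $f_*$ is a homotopy equivalence by hypothesis. Since $\pi$ is a categorical fibration by construction, Lemma~\ref{lem:ps fibn + ce implies tkf} then yields that $\pi$ is a trivial Kan fibration, hence by the earlier step an equivalence of $\infty$-categories. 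Finally, $f = \pi \circ i$ is a composite of two equivalences and so is itself an equivalence.

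The main obstacle I foresee is the reduction in the proof that $\pi$ is fully faithful: it requires that the Kan complexes $\Hom^L_{Q(f)}(-,-)$ be homotopy-invariant under replacing either argument by an equivalent object. This is a standard transport-of-left-morphisms argument, but since it has not been established earlier in the paper it would need to be treated with some care, most likely by invoking the fact that the source and target projections $\calC^{(\Delta^1)} \to \calC$ are trivial Kan fibrations (as recorded in Example~\ref{ex:0 <J induces tkf}) to transport an equivalence $p \simeq i(c)$ in $Q(f)$ into a homotopy equivalence on left mapping spaces.
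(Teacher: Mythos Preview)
Your proposal is correct and follows essentially the same route as the paper: factor $f$ through the mapping path space, observe that $i$ is an equivalence (as a section of a trivial Kan fibration), reduce to the case of the categorical fibration $\pi$, and invoke Lemma~\ref{lem:ps fibn + ce implies tkf}. The paper's proof is terser---it simply asserts ``we may suppose without loss of generality that $f$ is a categorical fibration''---but this WLOG step hides exactly the issue you flag as the main obstacle, namely that full faithfulness of $\pi$ at an arbitrary pair of objects of $Q(f)$ must be deduced from full faithfulness at objects in the image of $i$ via invariance of $\Hom^L$ under equivalence of objects; so your more careful treatment is warranted, and the transport argument you sketch using Example~\ref{ex:0 <J induces tkf} is the right way to close it.
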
 

\begin{proof} 
We have seen above that every equivalence 
between $\infty$-categories is fully 
faithful and essentially surjective.  
We need to prove 
the converse.  Suppose that $f\colon 
\calC\to \calD$ is fully faithful and 
essentially surjective, where $\calC$ 
and $\calD$ are $\infty$-categories.  
Example~\ref{ex:mapping path space} 
gives a factorization of the map 
$f\colon \calC\to \calD$ as 
\[
\calC \xrightarrow{i} Q(f)\xrightarrow{p} \calD 
\]
where $i$ is right inverse to a trivial 
Kan fibration and where $p$ is a  
categorical fibration.  
It follows that $i$ is fully faithful 
and essentially surjective.  
Therefore we may suppose without loss 
of generality that the map $f\colon \calC\to 
\calD$ is a categorical fibration.
Lemma~\ref{lem:ps fibn + ce implies tkf} 
then shows that $f$ is a trivial 
Kan fibration.       
\end{proof} 

\begin{lemma} 
\label{lem:cat equivs stable under po}
For any pushout diagram 
\[
\begin{tikzcd} 
A \arrow[r] \arrow[d,"i"] & C \arrow[d,"j"] \\ 
B \arrow[r] & D 
\end{tikzcd} 
\]
of simplicial sets in which 
$i$ is a monic categorical equivalence then so is $j$.  
\end{lemma}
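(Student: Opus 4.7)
The monomorphism property is immediate, since monomorphisms in $\sSet$ are stable under pushout. For the categorical equivalence property, the plan is to show that for every $\infty$-category $\calE$, the induced map $j^*\colon \Fun(D, \calE)\to \Fun(C, \calE)$ is an equivalence of $\infty$-categories.

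The functor $\Fun(-,\calE)$ converts the pushout defining $D$ into a pullback, so $j^*$ sits in a pullback square
\[
\begin{tikzcd}
\Fun(D,\calE) \arrow[d] \arrow[r,"j^*"] & \Fun(C,\calE) \arrow[d] \\
\Fun(B,\calE) \arrow[r,"i^*"] & \Fun(A,\calE)
\end{tikzcd}
\]
whose parallel bottom map $i^*\colon \Fun(B, \calE)\to \Fun(A, \calE)$ is an equivalence of $\infty$-categories, since $i$ is a categorical equivalence. If one can further show that $i^*$ is a categorical fibration, then Lemma~\ref{lem:ps fibn + ce implies tkf} implies $i^*$ is a trivial Kan fibration. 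Trivial Kan fibrations are stable under pullback, so $j^*$ is also a trivial Kan fibration, and in particular a categorical equivalence.

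The real work is thus to show that $i^*$ is a categorical fibration. Its inner fibration property follows by a routine adjunction argument from the fact that the pushout-product of a monomorphism with an inner anodyne map is inner anodyne. The harder part is to verify the right lifting property against $\set{0}\subseteq J$; by adjunction this reduces to extending maps $B\times \set{0}\cup_{A\times \set{0}}A\times J\to \calE$ along the inclusion $B\times \set{0}\cup_{A\times \set{0}}A\times J\hookrightarrow B\times J$. This is the main technical obstacle, since this inclusion is not itself inner anodyne. I expect to handle it by decomposing $A\hookrightarrow B$ as a transfinite composite of pushouts along boundary inclusions $\partial\Delta^n\hookrightarrow \Delta^n$ and reducing to the case $A = \partial\Delta^n$, $B = \Delta^n$. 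In this case the required extension can be constructed by a skeletal filtration of $\Delta^n\times J$ in the style of Example~\ref{ex:0 <J induces tkf}, by attaching the non-degenerate simplices of $J$ one at a time and invoking Joyal's special-outer-horn filling theorem (Theorem 1.3 of \cite{J1}) whenever the attaching map sends the initial edge of an outer horn to an equivalence in $\calE$.
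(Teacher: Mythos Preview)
Your overall strategy coincides with the paper's: both reduce to showing that $i^*\colon\Fun(B,\calE)\to\Fun(A,\calE)$ is a trivial Kan fibration by verifying it is a categorical fibration and then invoking Lemma~\ref{lem:ps fibn + ce implies tkf}. The divergence is entirely in how the isofibration condition on $\h(i^*)$ is checked.

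You propose to lift against $\set{0}\subseteq J$ directly, via a cell decomposition of $i$ reducing to the case $\partial\Delta^n\subseteq\Delta^n$, and then a skeletal filtration of $\Delta^n\times J$. This is correct in principle, but since $J$ is infinite-dimensional your filtration is a countable tower rather than a finite one, and each stage requires filling a pushout-product $\Delta^n\times\partial\Delta^k\cup\partial\Delta^n\times\Delta^k\hookrightarrow\Delta^n\times\Delta^k$ where every edge of $\Delta^k$ maps to an equivalence in $\calE$; carrying this out carefully with Joyal's special-horn theorem is feasible but laborious, and you have only sketched it. The paper avoids this entirely. Rather than lifting against $\set{0}\subseteq J$, it lifts a given equivalence $f\colon\Delta^1\to\calC^A$ to an edge of $\calC^B$: one first extends $f$ to $g\colon J\to\calC^A$, observes that the adjoint map $A\to\calC^J\to\calC^{\Delta^1}$ lands in the full subcategory $\calC^{(\Delta^1)}$, and then uses the already-established trivial Kan fibration $\calC^{(\Delta^1)}\to\calC^{\set{0}}$ (Example~\ref{ex:0 <J induces tkf}) to extend over $B$. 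This yields a lift of $f$ to $\calC^B$; the paper then appeals to full faithfulness of $\calC^B\to\calC^A$ to conclude the lifted edge is an equivalence, giving the isofibration property. The payoff is that all the combinatorics was already absorbed into Example~\ref{ex:0 <J induces tkf}, which only deals with the finite object $\Delta^n\times\Delta^1$ rather than $\Delta^n\times J$.
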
 

\begin{proof} 
Since trivial Kan fibrations 
are stable under base change it suffices 
to prove that for any $\infty$-category 
$\calC$, the map 
$\calC^{B}\to \calC^{A}$ 
induced by $i$ is a trivial 
Kan fibration.  The induced map 
is a categorical equivalence 
by hypothesis, and is an inner fibration 
by Corollary 2.3.2.5 of \cite{HTT}.  
Therefore it suffices by 
Lemma~\ref{lem:ps fibn + ce implies tkf} 
to prove that this inner fibration is 
a categorical fibration.  
Suppose given a commutative diagram 
\[
\begin{tikzcd} 
\Delta^{\set{0}} 
\arrow[r] \arrow[d] & 
\calC^B \arrow[d]  \\ 
\Delta^1 \arrow[r,"f"] 
\arrow[ur,dashed] & 
\calC^A 
\end{tikzcd} 
\]
in which the edge $f\colon 
\Delta^1\to \calC^A$ is an 
equivalence.  We need to 
prove that the indicated 
diagonal filler exists.  
Choose an extension 
$g\colon J\to \calC^A$ of $f$ 
along the map $\Delta^1\to J$.  
Observe that a diagonal 
filler exists in the 
diagram above if and only if 
the indicated diagonal filler 
exists in the diagram 
\[
\begin{tikzcd} 
\set{0} \arrow[r] \arrow[d] 
& \calC^B \arrow[d]         \\ 
J \arrow[r,"g"] \arrow[ur,dashed] 
& \calC^A 
\end{tikzcd}
\]
Denote by $\overline{f}\colon 
A\to \calC^{\Delta^1}$ and 
$\overline{g}\colon A\to 
\calC^J$ the maps conjugate to 
$f$ and $g$ respectively.  
Since $f$ is equal to the 
composite $\Delta^1\to J 
\xrightarrow{g} \calC^A$, the 
map $\overline{f}$ is equal 
to the composite $A\xrightarrow{ 
\overline{g}} \calC^J\to \calC^{\Delta^1}$.  
In particular it follows 
that $\overline{f}$ factors through 
$\calC^{(\Delta^1)}$ via a unique 
map $k\colon A\to \calC^{(\Delta^1)}$.  
Since the composite map 
$\calC^{(\Delta^1)}\to \calC^{\Delta^1} 
\to \calC^{\set{0}}$ is a trivial 
Kan fibration (Example~\ref{ex:0 <J induces tkf}), 
it follows that there is a map 
$B\to \calC^{(\Delta^1)}$ whose 
composite with $i\colon A\to B$ 
is equal to the map $k$.  It 
follows that the indicated diagonal 
filler in the diagram 
\[
\begin{tikzcd} 
A \arrow[r,"\overline{f}"] \arrow[d] & 
\calC^{\Delta^1} \arrow[d]             \\  
B \arrow[ur,dashed] \arrow[r] & 
\calC^{\set{0}} 
\end{tikzcd} 
\]
exists.  Therefore, by adjointness, 
we can find a diagonal filler $\hat{f}$ 
for the diagram 
\[
\begin{tikzcd} 
\set{0} \arrow[r] \arrow[d] & \calC^B \arrow[d] \\ 
\Delta^1 \arrow[ur,"\hat{f}"] \arrow[r] & 
\calC^A 
\end{tikzcd} 
\]
Therefore, we have proven that for 
every equivalence $f$ in $\h(\calC^A)$ 
and every vertex $u$ in $\h(\calC^B)$ such 
that $ui = f(0)$, there is an edge $\hat{f}$ 
in $\h(\calC^B)$ which projects to $f$.  
Since $\h(\calC^B)\to \h(\calC^A)$ is 
fully faithful, it follows that 
$\h(\calC^B)\to \h(\calC^A)$ is an 
isofibration.    
\end{proof} 

\begin{remark} 
Of course the above Lemma is an 
immediate consequence of the existence 
of the Joyal model structure on $\sSet$.  
One of our aims is to give an independent 
proof of the existence of this model structure; 
in order to avoid a circular argument 
(see the proof of Proposition~\ref{prop:key tech prop2}) 
we will need to use this lemma.  
\end{remark} 

\section{pre-fibrant simplicial sets} 
\label{sec:pre-fibrant simplicial sets} 

We have observed above that 
if $S$ is a simplicial set 
and $x,y$ are vertices of $S$ 
then the naive mapping space 
$\Hom^L_{S}(x,y)$ may not 
have the homotopy type of 
the mapping space $\Map_{\calC}(x,y)$ 
for an $\infty$-category $\calC$ 
generated by $S$.  
In this section we give a 
condition on a simplicial set 
to ensure that the naive mapping 
spaces have the correct homotopy types.  

\subsection{Pre-fibrant simplicial sets} 
We introduce the following definition.  

\begin{definition}
We will say that  
simplicial set $S$ is {\em pre-fibrant} 
if it satisfies the following conditions: 
\begin{enumerate}[label=(\roman*)]
\item every map $\Lambda^2_1\to S$ extends 
along the inclusion $\Lambda^2_1\subseteq \Delta^2$.  

\item for every $0<i<n$, if $f\colon \Lambda^n_i \to S$ 
is a map such that $d_0f$ is a constant 
$(n-1)$-simplex, 
then $f$ extends along 
the inclusion $\Lambda^n_i\subseteq \Delta^n$.  
\end{enumerate} 
\end{definition} 

Here we will say that an $n$-simplex $x\in S$ is {\em constant} 
if $x = s_0^{n}(v)$ for some vertex $v$ of $S$.  

\begin{example} 
Every $\infty$-category is a pre-fibrant simplicial set.  
\end{example} 

\begin{remark} 
If $S$ is a pre-fibrant simplicial set then 
for every pair of vertices $x,y\in S$, 
the naive mapping space $\Hom^L_{S}(x,y)$ 
is a Kan complex.  
\end{remark} 

\begin{remark} 
\label{rem:condition *}
Note that if $p\colon X\to S$ is an inner fibration, 
and $S$ is pre-fibrant, then $X$ is also 
pre-fibrant.  In particular if $S$ is pre-fibrant and 
$T\subseteq S$ is a full simplicial 
subset (see Remark~\ref{rem:full simplicial subset}) 
then $T$ is pre-fibrant.  
\end{remark} 

\subsection{Pre-fibrant simplicial sets and $\infty$-categories}
In this section we prove that the naive mapping 
spaces for a pre-fibrant simplicial set $S$ compute the mapping 
spaces of an $\infty$-category generated by $S$.  
More precisely, we prove the following result.

\begin{proposition} 
\label{prop:key prop}
Suppose that $S$ is a pre-fibrant simplicial set.  
Then there exists an inner anodyne map 
$f\colon S\to T$ with the following properties: 

\begin{enumerate} 
\item $T$ is an $\infty$-category; 
\item the induced map 
\[
S_{x/}\times_S \set{y}\to T_{x/}\times_T \set{y} 
\]
is an isomorphism for every pair of vertices 
$x,y\in S$.  
\end{enumerate}
\end{proposition}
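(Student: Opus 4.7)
The plan is to construct $T$ as a transfinite colimit $T = \mathrm{colim}_\alpha X_\alpha$ of inner anodyne extensions, with $X_0 = S$, where at each successor stage I form the pushout
\[
X_{\alpha+1} = X_\alpha \cup_{\bigsqcup_f \Lambda^n_i} \bigsqcup_f \Delta^n
\]
indexed over all inner horns $f\colon \Lambda^n_i \to X_\alpha$ with $n \geq 3$, $0 < i < n$, and $d_0 f$ not a constant $(n-1)$-simplex, taking colimits at limit ordinals. Since only inner horn inclusions are attached, the map $S \to T$ is inner anodyne, and by design every inner horn in $T$ either has $d_0$-face constant (extending in some $X_\alpha$ by condition (ii) of pre-fibrancy), or is a $\Lambda^2_1$-horn (extending by condition (i)), or gets filled freely at the appropriate stage; thus, provided each $X_\alpha$ is pre-fibrant, $T$ is an $\infty$-category, giving (1). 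Note that no new vertices are ever introduced, so $T_0 = S_0$.

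The combinatorial core is as follows. When a free filler $\phi$ is attached to a horn $f\colon \Lambda^n_i \to X_\alpha$, the only new non-degenerate simplices of $X_{\alpha+1}$ are $\phi$ itself and its $i$th face $d_i\phi$, together with their iterated degeneracies. A simplex $\sigma$ belongs to some $\Hom^L_{X_{\alpha+1}}(x,y)$ iff $d_0\sigma$ is an iterated degeneracy of a vertex. Using the simplicial identity $d_0 d_i = d_{i-1} d_0$ (valid for $i \geq 1$), we compute $d_0 \phi = d_0 f$ and $d_0 d_i \phi = d_{i-1} d_0 f$; a parallel check on degeneracies reduces membership in $\Hom^L$ to the constancy of these two simplices. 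By our restriction, $d_0 f$ is non-constant, so neither $\phi$ nor its degeneracies contribute to $\Hom^L$. Injectivity of $\Hom^L_S(x,y) \to \Hom^L_T(x,y)$ is automatic since $S \hookrightarrow T$ is a monomorphism.

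The main obstacle, and the final step in proving (2), is the intermediate case in which $d_0 f$ is non-constant (so we do attach a filler) yet $d_{i-1} d_0 f$ happens to be constant, which would allow $d_i\phi$ to slip into some $\Hom^L_{X_{\alpha+1}}(x,y)$. A combinatorial inspection shows such horns are confined to a rigid vertex configuration, and one must argue, using condition (i) of pre-fibrancy (which provides compositions at the 2-simplex level) together with an inductive verification that pre-fibrancy persists under the construction, either that such horns already extend inside $X_\alpha$ (so they need not be attached) or that the attaching map factors so that $d_i\phi$ is identified with a simplex already present in $X_\alpha$. Establishing this control, and the persistence of pre-fibrancy through the transfinite induction — at successor stages via the analysis above, and at the filtered limit since inner horns are finitely presentable — is the principal technical step of the argument.
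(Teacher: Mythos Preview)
Your construction is essentially the paper's: attach free fillers only to inner horns $\Lambda^n_i$ with $n\geq 3$ whose $d_0$-face is non-constant, relying on pre-fibrancy of $S$ for horns with constant $d_0$ and for all $\Lambda^2_1$-horns. The paper, however, organizes this dimension by dimension rather than transfinitely: at stage $n$ it attaches only the $n$-cells, building $\sk_n T$, and the controlling invariant is (P-2): every $n$-simplex of $\sk_n T$ not already in $S$ has non-constant $d_0$. The paper proves (P-2) inductively, with an explicit simplicial-identity computation handling the degenerate case, and then uses it both for the isomorphism of $\Hom^L$-spaces and to show that any horn in $T$ with constant $d_0$ factors back through $S$ (hence extends there by pre-fibrancy).

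You have correctly located the delicate point --- the free face $d_i\phi$ has $d_0(d_i\phi)=d_{i-1}(d_0 f)$, which can be constant even when $d_0 f$ is not --- but what follows is not an argument. Neither alternative you offer holds. The pushout is indexed over \emph{all} qualifying horns, so ``such horns already extend, hence need not be attached'' is irrelevant: a fresh filler, with its fresh free face $d_i\phi$, is glued on regardless of whether some other extension exists. And in a pushout along $\Lambda^n_i\hookrightarrow\Delta^n$ the missing face $d_i\phi$ is always a genuinely new non-degenerate simplex, never identified with one already present in $X_\alpha$. Finally, the claimed persistence of pre-fibrancy is circular as you have set it up: to extend a horn in $X_\alpha$ with constant $d_0$ you want it to factor through $S$, which requires knowing that simplices of $X_\alpha$ with constant $d_0$ already lie in $S$ --- precisely statement (2). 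The paper's skeletal filtration and the sharp top-dimensional formulation of (P-2) are what organize the bookkeeping; you should adopt that structure rather than the undifferentiated transfinite one, which mixes all cell dimensions at each stage and leaves you exposed to exactly the $d_i\phi$ contamination you cannot resolve.
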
 

\begin{proof} 
We construct a simplicial set $T$ which contains 
$S$ as a subcomplex as follows.    
Define $\sk_3T$ by the pushout diagram  
\[
\begin{tikzcd} 
\bigsqcup_{a\in A_3}\Lambda^3_{i_a} \ar[d] \ar[r] & \sk_3S \ar[d] \\  
\bigsqcup_{a\in A_3}\Delta^3 \ar[r] & \sk_3 T
\end{tikzcd}
\]
where $A_3$ denotes the set of maps 
\[
a\colon \Lambda^3_i\to \sk_3 S 
\]
with $0<i<3$ and where $d_0(a)$ is non-constant. 
Clearly the canonical map $\sk_3S \to \sk_3T$ is inner anodyne.  
Note the following facts: (i) if $\sigma\colon \Delta^3\to \sk_3T$ does 
not belong to $\sk_3S$, then $d_0(\sigma)$ is 
non-constant, and (ii) $\sk_1 S = \sk_1 T$.   

Assume inductively that $\sk_nT$ has been constructed for $n\geq 3$, containing 
$\sk_nS$ as a subcomplex and with the following properties: 

\begin{enumerate}[label={(P-\arabic*)}] 
\item the inclusion $\sk_n S\subseteq \sk_nT$ is inner anodyne; 
\item if $\sigma\colon \Delta^n\to \sk_nT$ is an $n$-simplex which 
does not belong to $\sk_nS$ then $d_0(\sigma)$ is 
non-constant. 
\end{enumerate} 

Define $\sk_{n+1}T$ by the pushout diagram 
\begin{equation} 
\label{eq:pushout diagram for n skeleton}
\begin{tikzcd} 
\bigsqcup_{a\in A_{n+1}} \Lambda^{n+1}_{i_a} \ar[r] \ar[d] & \sk_{n+1}S\cup \sk_n T \ar[d] \\ 
\bigsqcup_{a\in A_{n+1}} \Delta^{n+1} \ar[r] & \sk_{n+1}T 
\end{tikzcd} 
\end{equation}
where $A_{n+1}$ denotes the set of maps 
\[
a\colon \Lambda^{n+1}_i\to \sk_{n+1}S\cup \sk_nT 
\]
with $0<i<n+1$ and where $d_0(a)$ is non-constant.

To close 
the inductive loop we need to prove that 
$\sk_{n+1}T$ satisfies the corresponding 
properties (P-1) and (P-2).  
For (P-1), we need to prove that the canonical map 
$\sk_{n+1}S\to \sk_{n+1}T$ is inner anodyne.   
For this, note that the  
map $\sk_{n+1}S\cup \sk_nT\to \sk_{n+1}T$ 
in the diagram~\eqref{eq:pushout diagram for n skeleton} above is inner anodyne.  Since the map 
\[
\sk_{n+1}S\to \sk_{n+1}S\cup \sk_nT 
\]
is a pushout of the inner anodyne map $\sk_nS\to \sk_nT$, it follows 
that the composite map $\sk_{n+1}S\to \sk_{n+1}T$ is inner anodyne.
For (P-2), we need to prove that if 
$\sigma\colon \Delta^{n+1}\to \sk_{n+1}T$ is an 
$(n+1)$-simplex which does not belong to 
$\sk_{n+1}S$ then $d_0(\sigma)$ is non-constant.  If 
$\sigma$ is non-degenerate then $d_0(\sigma)$ is non-constant by 
construction.  Suppose then that $\sigma$ is degenerate --- suppose 
$\sigma = s_i(x)$ where $x\in \sk_nT$.    
Suppose for a contradiction that $d_0(\sigma)$ is constant --- suppose 
$d_0(\sigma) = s_0^{n}(v)$ for some vertex $v$ of $S$.  
If $i=0$ then $x = s_0^{n}(v)$; if $i\geq 1$ then 
\begin{align*} 
d_0(x) & = d_0d_is_i(x) \\ 
& = d_0d_i(\sigma) \\ 
& = d_{i-1}s_0^{n}(v) \\ 
& = s_0^{n-1}(v).  
\end{align*} 
In either case we see that $d_0(x)$ is constant.  This is a contradiction 
to the inductive hypothesis, since $\sigma$ 
does not belong to $\sk_{n+1}S$ and 
hence $x$ does not belong to $\sk_nS$.  Therefore $d_0(\sigma)$ is 
not constant.   

This completes the inductive step and hence the construction of the 
simplicial set $T$ containing $S$ as a subcomplex.  Since each inclusion  
$\sk_nS\subseteq \sk_nT$, $n\geq 3$, is inner anodyne, it follows from 
Lemma~\ref{lem:ladders of inner anodynes} that the inclusion $S\subseteq T$ is 
inner anodyne.  

It remains to prove that 
\begin{enumerate}  
\item the induced map 
\[
S_{x/}\times_S \set{y}\to T_{x/}\times_T \set{y} 
\]
is an isomorphism for every pair of vertices $x,y\in S$;
\item $T$ is an $\infty$-category.  
\end{enumerate} 
We prove the first statement.  Clearly we have 
an injective map  
\[
S_{x/}\times_{S} \set{y} \to T_{x/}\times_{T} \set{y}  
\]
since $S\to T$ is a monomorphism.
Let $\sigma\colon \Delta^n\to 
T_{x/}\times_T \set{y}$ be an $n$-simplex.  Then 
$\sigma$ corresponds to an $(n+1)$-simplex $\tilde{\sigma}
\colon \Delta^{n+1}\to T$ such that $d_0(\tilde{\sigma})$ is the 
constant $n$-simplex at $y$ and $\tilde{\sigma}|\Delta^{\set{0}} = x$. 
Since $d_0(\tilde{\sigma})$ is constant, 
$\tilde{\sigma}$ must belong to 
$\sk_{n+1}S$ by construction.  Hence $\tilde{\sigma}$ factors 
through $S$ and hence the $n$-simplex $\sigma\colon \Delta^n
\to T_{x/}\times_T\set{y}$ factors through $S_{x/}\times_S \set{y}$.  
Hence we have an isomorphism 
\[
S_{x/}\times_S \set{y} \simeq T_{x/}\times_T \set{y} .  
\]
We prove the second statement, i.e.\ we prove that 
$T$ is an $\infty$-category.  Suppose given a map 
\[
f\colon \Lambda^n_i\to T 
\]
with $0<i<n$ and where $n\geq 3$.  Then $f$ factors 
through $\sk_{n-1}T$.  If $d_0f$ is non-constant, 
then the composite map 
$\Lambda^n_i\xrightarrow{f}\sk_{n-1}T\to \sk_{n}T$ extends 
along the inclusion $\Lambda^n_i\subseteq \Delta^n$.  

Suppose now that $d_0f$ is constant.  
Then for every face 
$\partial_j\Delta^{n-1}$ with $j\neq i$, the 
induced map $d_jf := f|\partial_j\Delta^{n-1}$ is an 
$(n-1)$-simplex $d_jf\colon \Delta^{n-1}\to \sk_{n-1}T$ 
with $d_0(d_jf)$ constant.  
Hence each map 
$d_jf$ factors through $\sk_{n-1}S$.  Hence $f$ 
factors through $\sk_{n-1}S$.  By hypothesis, 
the composite map $\Lambda^n_i\xrightarrow{f} \sk_{n-1}S 
\to \sk_nS$ extends along the inclusion 
$\Lambda^n_i\subseteq \Delta^n$.  

Finally, since $\sk_1 S = \sk_1 T$ and $S$ 
has the right lifting property against the 
inclusion $\Lambda^2_1\subseteq \Delta^2$, it 
follows that $T$ is an $\infty$-category.     
\end{proof} 

\begin{remark} 
\label{rem:mapping space for condition star}
Suppose that $S$ is a pre-fibrant simplicial set.  
If $u\colon S\to T$ is an inner anodyne map where 
$T$ is an $\infty$-category then the induced map 
\[
\bar{u}\colon S_{s/}\times_{S} \set{s'}\to 
T_{u(s)/}\times_{T}\set{u(s')} 
\]
is a homotopy equivalence between Kan complexes for 
any pair of vertices $s,s'\in S$.  To see this, 
choose an inner anodyne map $v\colon S\to T'$ 
with the properties described in 
Proposition~\ref{prop:key prop}.  Then there 
exists a map $w\colon T\to T'$ such that $v = wu$.  
The statement follows by the 2-out-of-3 property 
for weak homotopy equivalences.     
\end{remark} 

\begin{remark} 
\label{rem:fully faithful for pre-fibrant simp set}
It follows from Proposition~\ref{prop:key prop} 
that a map $f\colon S\to T$ of pre-fibrant simplicial sets 
is fully faithful if and only if the induced map 
$\Hom^L_{S}(x,y)\to \Hom^L_{T}(f(x),f(y))$ on 
the naive mapping spaces is a homotopy 
equivalence between Kan complexes 
for all objects $x$ and $y$ of $S$. 
\end{remark} 

\subsection{The homotopy category of a pre-fibrant simplicial set} 
In this section we make some 
observations about the homotopy 
category of pre-fibrant simplicial sets 
that we shall put to use later. 

\begin{lemma} 
\label{corr:hty cat of pre-fibrant sset}
Let $S$ be a pre-fibrant simplicial set.  Then for 
any pair of vertices $s,s'\in S$, there 
is an isomorphism 
\[
\pi_0(S_{s/}\times_{S}\set{s'}) 
\simeq \h(S)(s,s'). 
\]
\end{lemma}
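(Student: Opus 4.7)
The plan is to reduce the statement to the analogous, well-known identification for $\infty$-categories by means of Proposition~\ref{prop:key prop}. That proposition provides an inner anodyne map $f\colon S\to T$ with $T$ an $\infty$-category and such that the induced map
\[
S_{s/}\times_{S}\set{s'} \to T_{f(s)/}\times_{T}\set{f(s')}
\]
is an isomorphism. Since $T$ is an $\infty$-category, it is standard (Section 1.2.3 of \cite{HTT}) that $\pi_0(T_{f(s)/}\times_{T}\set{f(s')}) \simeq \h(T)(f(s),f(s'))$. It therefore suffices to exhibit a natural bijection $\h(S)(s,s') \simeq \h(T)(f(s),f(s'))$.

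For this I would show that $\h(f)\colon \h(S)\to \h(T)$ is an isomorphism of categories. The functor $\h\colon \sSet\to \Cat$ is left adjoint to the nerve and therefore preserves colimits. A direct combinatorial inspection shows that $\h$ sends each inner horn inclusion $\Lambda^n_i\subseteq \Delta^n$, $0<i<n$, to an isomorphism of categories: the two simplicial sets have the same set of vertices, and the $2$-simplices already present in $\Lambda^n_i$ generate all the composition relations that hold in $[n]=\h(\Delta^n)$. Since an inner anodyne map is a transfinite composition of pushouts of such inclusions, and isomorphisms in $\Cat$ are preserved by pushouts and transfinite composition, $\h(f)$ is an isomorphism.

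Combining the two isomorphisms yields the claim. The only nontrivial ingredient is the verification that each inner horn inclusion becomes an isomorphism under $\h$; I expect any technicalities to concentrate there, but it reduces to a short check using the presentation of $\h(S)$ by generators (the $1$-simplices) and relations (the $2$-simplices) of $S$.
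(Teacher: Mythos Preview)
Your proposal is correct and follows essentially the same route as the paper's proof: both apply Proposition~\ref{prop:key prop} to pass to an $\infty$-category $T$, use the standard identification $\pi_0\bigl(T_{f(s)/}\times_T\{f(s')\}\bigr)\simeq \h(T)(f(s),f(s'))$, and then invoke the fact that an inner anodyne map induces an isomorphism on homotopy categories. The only difference is that the paper simply asserts this last fact, while you sketch its proof via the colimit-preservation of $\h$ and the case of inner horn inclusions.
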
 

\begin{proof} 
Let $u\colon S\to T$ be an inner anodyne 
map as in Proposition~\ref{prop:key prop}.  
The functor $\h(S)\to \h(T)$ induced 
by $u$ is an isomorphism.    
For any pair of vertices $s,s'\in S$ 
we have an isomorphism 
\[
\pi_0(S_{s/}\times_{S}\set{s'})\simeq 
\pi_0(T_{u(s)/}\times_{T}\set{u(s')}). 
\]
But $\pi_0(T_{u(s)/}\times_{T}\set{u(s')}) = 
\pi(T)(u(s),u(s'))$ and we have an 
isomorphism $\h(T)\simeq \pi(T)$ (here $\pi(T)$ 
denotes the category constructed in Section 1.2.3 
of \cite{HTT}).  
\end{proof} 

\begin{lemma} 
\label{lem:char of isofibns}
Let $p\colon X\to S$ be an inner fibration between pre-fibrant 
simplicial sets.  Then $\h(p)\colon \h(X)\to \h(S)$ is 
an isofibration if and only if for every equivalence 
$f\colon \Delta^1\to S$ and vertex $x\in X$ such that 
$p(x) = f(0)$, there exists 
an equivalence $u\colon \Delta^1\to X$ such 
that $p(u) = f$.  
\end{lemma}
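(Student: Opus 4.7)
The plan is to treat the two implications separately. For the $(\Leftarrow)$ direction: if the lifting property holds, then given any isomorphism $[\bar{f}]\colon p(x)\to s'$ in $\h(S)$, Lemma~\ref{corr:hty cat of pre-fibrant sset} lets me realize $[\bar{f}]$ as the class of some edge $f\colon \Delta^1\to S$, which is an equivalence by the very definition of equivalence in a simplicial set. Applying the hypothesis to $f$ and $x$ produces an equivalence $u\colon \Delta^1\to X$ with $u(0)=x$ and $p(u)=f$; then $[u]$ is an isomorphism in $\h(X)$ lifting $[\bar{f}]$, showing that $\h(p)$ is an isofibration.

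For $(\Rightarrow)$, suppose $\h(p)$ is an isofibration and let $f\colon \Delta^1\to S$ be an equivalence with $f(0)=p(x)$. Applying the isofibration property to $[f]$ and $x$ yields an isomorphism $[v]\colon x\to x'$ in $\h(X)$ with $\h(p)[v]=[f]$; any representative $v\colon \Delta^1\to X$ is then an equivalence with $v(0)=x$, $p(x')=f(1)=:s'$, and $[p(v)]=[f]$ in $\h(S)(p(x),s')$. The difficulty is that $p(v)$ is only homotopic to $f$, not equal to it. To upgrade this homotopy to on-the-nose equality, I invoke Lemma~\ref{corr:hty cat of pre-fibrant sset} to identify $\h(S)(p(x),s')$ with $\pi_0\Hom^L_S(p(x),s')$; since $S$ is pre-fibrant this is a Kan complex, so the equality $[p(v)]=[f]$ is witnessed by a $1$-simplex connecting $p(v)$ and $f$, which unpacks to a $2$-simplex $\sigma\colon \Delta^2\to S$ with $d_2\sigma=p(v)$, $d_1\sigma=f$, and $d_0\sigma$ the constant edge at $s'$.

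I next build a compatible inner horn $h\colon \Lambda^2_1\to X$ by setting $d_2h=v$ and $d_0h$ equal to the constant edge at $x'$; then $p\circ h=\sigma|_{\Lambda^2_1}$. Because $p$ is an inner fibration, this horn lifts to a $2$-simplex $\tau\colon \Delta^2\to X$ with $\tau|_{\Lambda^2_1}=h$ and $p\circ\tau=\sigma$. Setting $u:=d_1\tau$ produces an edge $\Delta^1\to X$ with $u(0)=x$, $u(1)=x'$, and $p(u)=d_1\sigma=f$. Finally $u$ is an equivalence in $X$, since the $2$-simplex $\tau$ gives the relation $[u]=[d_0\tau]\circ[d_2\tau]=\id_{x'}\circ[v]=[v]$ in $\h(X)$, which is an isomorphism. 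The principal obstacle, and the step where the pre-fibrant hypothesis on $S$ is essential, is precisely this passage from the isofibration condition (which lifts morphisms of $\h(S)$ only up to homotopy) to the required on-the-nose equality $p(u)=f$; it is mediated by the Kan complex property of $\Hom^L_S(p(x),s')$ together with the inner horn lifting along $p$.
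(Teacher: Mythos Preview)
Your proof is correct, and it takes a genuinely different route from the paper's argument.

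For the $(\Rightarrow)$ direction, the paper proceeds indirectly: it invokes Proposition~\ref{prop:key prop} to embed $S$ into an $\infty$-category $\calD$ via an inner anodyne map, factors $X\to\calD$ to obtain an inner fibration $\calC\to\calD$ between $\infty$-categories, and then uses the fact that an isofibration on homotopy categories between $\infty$-categories lifts equivalences on the nose (via the right lifting property against $\{0\}\hookrightarrow J$). The equivalence $v$ is produced in $\calC$, and the desired lift $u$ in $X$ is then extracted from a comparison of mapping-space fibrations, using that the bottom map $\Hom^L_S\to\Hom^L_\calD$ is an isomorphism and the top map $\Hom^L_X\to\Hom^L_\calC$ is a homotopy equivalence.

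Your argument stays entirely inside the pre-fibrant simplicial sets $X$ and $S$. You lift $[f]$ to an isomorphism $[v]$ in $\h(X)$ directly, realize the relation $[p(v)]=[f]$ as a single $2$-simplex $\sigma$ in $S$ using only that $\Hom^L_S(p(x),s')$ is a Kan complex, and then lift the resulting $\Lambda^2_1$-horn along the inner fibration $p$. This is more elementary: it avoids the passage to $\infty$-categories, does not require Proposition~\ref{prop:key prop} or Remark~\ref{rem:mapping space for condition star}, and makes transparent exactly which features of pre-fibrancy are used (namely the Kan property of the naive mapping spaces together with Lemma~\ref{corr:hty cat of pre-fibrant sset}). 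The paper's approach, by contrast, illustrates the general philosophy of reducing questions about pre-fibrant simplicial sets to the $\infty$-categorical case, which is thematically consistent with the rest of the paper but heavier for this particular lemma.
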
 

\begin{proof} 
Suppose that $\h(p)\colon \h(X)\to \h(S)$ is an isofibration.  
Choose an inner anodyne map $S\to \calD$ with the properties 
of Proposition~\ref{prop:key prop} so that $\calD$ 
is an $\infty$-category.  The composite map $X\to S\to \calD$ 
factorizes as $X\to \calC\to \calD$ where $X\to \calC$ is inner 
anodyne and $\calC\to \calD$ is an inner fibration.  Thus we have 
a commutative diagram of the form 
\[
\begin{tikzcd} 
X \arrow[r] \arrow[d] & \calC\arrow[d] \\ 
S \arrow[r] & \calD 
\end{tikzcd} 
\]
Since $\h(p)\colon \h(X)\to \h(S)$ is an isofibration it follows 
that the induced map $\h(\calC)\to \h(\calD)$ is an isofibration.  
Hence there exists an equivalence $v\colon \Delta^1\to \calC$ 
such that $v(0) = x$ and $p(v) = f$.  Let $y = v(1)$ and $t = f(1)$.  We have an 
induced map 
\[
\begin{tikzcd} 
X_{x/}\times_{X} \set{y} \arrow[r] \arrow[d] & 
\calC_{x/}\times_{\calC}\set{y} \arrow[d] \\ 
S_{s/}\times_{S} \set{t} \arrow[r] & 
\calD_{s}\times_{\calD} \set{t} 
\end{tikzcd} 
\]
where the lower horizontal map is an isomorphism 
(Proposition~\ref{prop:key prop}), the vertical maps 
are Kan fibrations between Kan complexes, and the upper horizontal map is 
a homotopy equivalence (Remark~\ref{rem:mapping space for condition star}).
It follows that we can choose a vertex $u$ in the fiber of $X_{x/}\times_{X}\set{y}
\to S_{s/}\times_{S}\set{t}$ over $f$ whose image in $\calC_{x/}\times_{\calC} 
\set{y}$ is homotopic to $v$.  Thus $u$ represents a morphism in $\h(X)(x,y)$ 
whose image in $\h(\calC)(x,y)$ is equal to $[v]$.  Since $\h(X)\to \h(\calC)$ 
is an isomorphism it follows that $[u]$ is an isomorphism in $\h(X)$.  
The proof of the converse statement is straightforward and is left 
to the reader.  
\end{proof} 

\begin{remark} 
\label{rem:subcomplex S'}
If $S$ is a pre-fibrant simplicial set and 
$f\colon x\to y$ is an equivalence in 
$S$ (Section~\ref{subsec:hty cats}) 
then it is not necessarily true that 
the map $f\colon \Delta^1\to S$ 
classifying $f$ extends along the canonical 
map $\Delta^1\to J$.  However, it 
follows from Lemma~\ref{corr:hty cat of pre-fibrant sset} 
that there exists an edge $g\colon y\to x$ 
such that $[g][f]= \id_x$ and 
$[f][g] = \id_y$ in $\h(S)(x,x)$ 
and $\h(S)(y,y)$ respectively.  
Hence there exist $2$-simplices 
$\sigma,\sigma'\colon \Delta^2\to S$ 
corresponding to diagrams 
\[
\begin{tikzcd} 
& y \arrow[dr,"g"] & \\ 
x \arrow[ur,"f"] \arrow[rr,"\phi"] 
& & x 
\end{tikzcd} 
\quad \text{and}\quad 
\begin{tikzcd} 
& x \arrow[dr,"f"] & \\ 
y \arrow[ur,"g"] \arrow[rr,"\psi"] 
& & y 
\end{tikzcd}
\]
respectively.  Moreover $[\phi] = \id_x$ 
in $\h(S)(x,x)$ and $[\psi] = \id_y$ in 
$\h(S)(y,y)$.  Therefore, 
using Lemma~\ref{corr:hty cat of pre-fibrant sset} 
again, we see that, since $S_{x/}\times_{S} \set{x}$ 
and $S_{y/}\times_{S}\set{y}$ 
are Kan complexes, there exist $2$-simplices 
$\tau,\tau'\colon \Delta^2\to S$ 
corresponding to diagrams 
\[
\begin{tikzcd} 
& x \arrow[dr,"\id_x"] & \\ 
x \arrow[ur,"\phi"] \arrow[rr,"\id_x"] 
& & x 
\end{tikzcd} 
\quad \text{and}\quad 
\begin{tikzcd} 
& y \arrow[dr,"\id_y"] & \\ 
y \arrow[ur,"\psi"] \arrow[rr,"\id_y"] 
& & y 
\end{tikzcd}
\]
respectively.
Notice that if $S'\subseteq S$ 
denotes the subcomplex generated by the 
$2$-simplices $\sigma$, $\sigma'$, $\tau$ 
and $\tau'$ then $\h(S') = J$.    
\end{remark}

\begin{lemma} 
\label{lem:pullback lemma}
Suppose given a pullback diagram 
\[
\begin{tikzcd} 
X \arrow[d,"p"] \arrow[r,"u"] & Y \arrow[d,"q"] \\ 
S \arrow[r,"v"] & T 
\end{tikzcd} 
\]
of pre-fibrant simplicial sets.  If $q\colon Y\to T$ 
is an inner fibration, then the induced 
diagram 
\[
\begin{tikzcd} 
\h(X) \arrow[r] \arrow[d] & \h(Y) \arrow[d] \\ 
\h(S) \arrow[r] & \h(T) 
\end{tikzcd} 
\]
is a pullback diagram in $\Cat$.  
\end{lemma}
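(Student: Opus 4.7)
The plan is to show that the canonical functor $\Phi \colon \h(X) \to \h(S) \times_{\h(T)} \h(Y)$ is an isomorphism of categories. On objects this is immediate, since both sides agree with $X_0 = S_0 \times_{T_0} Y_0$.

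For the morphism sets, I would fix $x = (s,y)$ and $x' = (s', y')$ in $X_0$ and invoke Lemma~\ref{corr:hty cat of pre-fibrant sset} to rewrite each hom set as $\pi_0$ of the corresponding naive mapping space. Writing $M_Z = Z_{z/} \times_Z \{z'\}$ for each $Z \in \{X, S, T, Y\}$, each is a Kan complex by pre-fibrancy, and since the overslice and fiber constructions commute with pullback in $\sSet$ the pullback hypothesis yields a natural isomorphism $M_X \cong M_S \times_{M_T} M_Y$. By Proposition~2.1.2.1 of \cite{HTT}, the inner fibration $q$ induces a left fibration $\bar q \colon M_Y \to M_T$, which is therefore a Kan fibration between Kan complexes. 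It suffices to show
\[
\pi_0 \bigl(M_S \times_{M_T} M_Y\bigr) \longrightarrow \pi_0 M_S \times_{\pi_0 M_T} \pi_0 M_Y
\]
is a bijection.

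For surjectivity, given classes $[\alpha]$ and $[\beta]$ with common image in $\pi_0 M_T$, I choose a $1$-simplex in $M_T$ from $\bar v(\alpha)$ to $\bar q(\beta)$ and lift it through $\bar q$ to a $1$-simplex in $M_Y$ ending at $\beta$; its source furnishes the required preimage. For injectivity, suppose $(\alpha, \beta)$ and $(\alpha', \beta')$ in $M_X$ are connected componentwise by $\sigma$ in $M_S$ and $\tau$ in $M_Y$. Lifting $\bar v(\sigma)$ along $\bar q$ starting at $\beta$ produces $\tilde\sigma$ ending at some $\beta''$ with $\bar q(\beta'') = \bar v(\alpha')$, and the pair $(\sigma, \tilde\sigma)$ is then a $1$-simplex in $M_X$ from $(\alpha, \beta)$ to $(\alpha', \beta'')$. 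The remaining task is to connect $(\alpha', \beta'')$ to $(\alpha', \beta')$ in $M_X$, which amounts to producing a $1$-simplex in $M_Y$ from $\beta''$ to $\beta'$ that lies in the fiber of $\bar q$ over $\bar v(\alpha')$.

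The hard part is this last step. Equivalently, one must verify that the loop $\bar v(\sigma)^{-1} \cdot \bar q(\tau)$ in $M_T$ based at $\bar v(\alpha')$ is null-homotopic. The natural attempt is to invoke condition~(ii) of pre-fibrancy on $T$ applied to $3$-dimensional inner horns with constant $d_0$-face: given two $2$-simplices of $T$ with a common boundary (in our setting $v(\sigma)$ and $q(\tau)$, both $2$-simplices of $T$ with boundary $(v(\alpha), s_0(v(s')), v(\alpha'))$), such a horn filling produces a $3$-simplex $\Theta$ of $T$ whose intermediate face yields a $2$-simplex in $M_T$ witnessing the desired homotopy relation. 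The lifting of this $3$-simplex along the inner fibration $q$, arranged so that the two relevant $2$-faces lift correctly given the boundary data in $Y$, then produces the needed $1$-simplex in the fiber. I expect the principal technical burden to be setting up the horn $\Lambda^3_i \to T$ (with $i \in \{1,2\}$) with faces chosen so that the pre-fibrant filling on $T$ and the inner fibration lifting on $q$ interact compatibly to close the argument.
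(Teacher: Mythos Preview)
Your overall strategy coincides with the paper's: both reduce to hom-sets via Lemma~\ref{corr:hty cat of pre-fibrant sset}, identify $M_X \cong M_S \times_{M_T} M_Y$ with $\bar q \colon M_Y \to M_T$ a Kan fibration, and then aim to show that applying $\pi_0$ yields a pullback of sets.  The paper dispatches this last step in a single sentence (``the diagram is a homotopy pullback; the result follows by applying $\pi_0$''), whereas you unpack it and correctly isolate the non-trivial point: injectivity of $\pi_0(M_S\times_{M_T}M_Y)\to \pi_0 M_S\times_{\pi_0 M_T}\pi_0 M_Y$ is not automatic, since $\pi_0$ does not in general commute with pullback along a Kan fibration (the obstruction lies in $\pi_1 M_T$).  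So you have in fact located a step the paper's proof elides.

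Your proposed fix, however, does not close the gap.  Filling a $\Lambda^3_i$-horn in $T$ with $d_0$ constant and with $v(\sigma)$, $q(\tau)$ among the prescribed faces does \emph{not} produce a homotopy $\bar v(\sigma)\simeq \bar q(\tau)$ in $M_T$: the face you need to be degenerate is exactly the face that pre-fibrancy leaves unconstrained.  For instance, the horn $\Lambda^3_1\to T$ with $d_0=s_0^2(t')$, $d_2=v(\sigma)$, $d_3=q(\tau)$ is well-defined and fills by condition~(ii), but the resulting $2$-simplex $\bar\Theta$ of $M_T$ has $d_1\bar\Theta=v(\sigma)$, $d_2\bar\Theta=q(\tau)$, and $d_0\bar\Theta$ an \emph{arbitrary} loop at $v(\alpha')$---precisely the loop you were trying to prove trivial.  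The dual choice $\Lambda^3_2$ has the same defect.  More fundamentally, your reformulation ``one must verify that the loop $\bar v(\sigma)^{-1}\cdot \bar q(\tau)$ is null-homotopic'' is too strong: what is actually needed is only that this loop lies in the image of $\pi_1 M_Y\to\pi_1 M_T$ (so that $\beta''$ and $\beta'$ lie in the same component of the fiber), and even this weaker statement cannot be extracted from a single horn-filling in $T$.  Your sketch of ``lifting the $3$-simplex along $q$'' compounds the problem, since you would have to prescribe \emph{all} faces of a $\Lambda^3_i$ in $Y$ compatibly, and nothing guarantees that the fillers obtained in $T$ and in $Y$ match.  As written, the injectivity argument is incomplete.
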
 

\begin{proof} 
It suffices to prove that for every pair 
of vertices $x,x'\in X$, the induced 
diagram 
\[
\begin{tikzcd} 
\h(X)(x,x') \arrow[r] \arrow[d] & \h(Y)(u(x),u(x')) \arrow[d] \\ 
\h(S)(s,s') \arrow[r] & \h(T)(v(s),v(s'))  
\end{tikzcd} 
\]
is a pullback, where $s = p(x)$ and $s' = p(x')$.  The diagram 
\[
\begin{tikzcd} 
X_{x/}\times_{X} \set{x'} \arrow[d,"p"] 
\arrow[r] & Y_{u(x)/}\times_{Y} \set{u(x')} \arrow[d,"q"] \\ 
S_{s/}\times_{S} \set{s'} \arrow[r] & T_{v(s)/}\times_{T} \set{v(s')} 
\end{tikzcd} 
\]
is a homotopy pullback.  The result 
follows by applying $\pi_0$.  
\end{proof}

\section{Descent for inner fibrations} 
\label{sec:descent}

In this section we introduce a 
notion of descent for inner fibrations.  
The case that will be of most 
interest to us is the case of 
descent along an inner anodyne 
map constructed via the small 
object argument.  Our main 
result is Proposition~\ref{prop:descent pre-fibrant simp sets}.   

\begin{definition} Suppose that $p\colon X\to S$ is an 
inner fibration between simplicial sets 
and that $f\colon S\to T$ is a map 
in $\sSet$.  We will say that $p\colon X\to S$ satisfies 
{\em descent} relative to $f\colon S\to T$ 
if there exists an inner fibration 
$q\colon Y\to T$ forming part 
of a pullback diagram 
\[
\begin{tikzcd} 
X \arrow[d,"p"] \arrow[r,"g"] & Y \arrow[d,"q"] \\ 
S \arrow[r,"f"] & T 
\end{tikzcd} 
\]
If $f\colon S\to T$ is inner 
anodyne then we will say that 
$p\colon X\to S$ satisfies {\em inner 
anodyne descent} with respect to $f$ 
if the map $g$ in the diagram above 
is inner anodyne.  
\end{definition} 

We will be interested in finding conditions 
on the inner fibration $p$ which ensure that 
$p$ satisfies descent relative to certain 
maps $f\colon S\to T$.  
The following lemma gives a convenient sufficient condition  
for an inner fibration 
to satisfy descent.  

\begin{lemma}
\label{lem:joyals descent lemma}
Suppose that $f\colon T\to S$ and $p\colon X\to S$ are maps 
of simplicial sets and that $f$ is surjective.  
If there is a pullback diagram 
\[
\begin{tikzcd} 
Y \arrow[d,"q"] \arrow[r] & X \arrow[d,"p"] \\ 
T \arrow[r,"f"] & S  
\end{tikzcd}
\]
in which $q$ is an inner fibration then $p$ is also an inner fibration.  
\end{lemma}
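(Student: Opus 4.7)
My plan is to verify directly that $p$ has the right lifting property with respect to every inner horn inclusion $\Lambda^n_i\subseteq\Delta^n$, $0<i<n$, by lifting any such extension problem through $f$ and then solving it on the $Y$ side.

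So, given a commutative square
\[
\begin{tikzcd}
\Lambda^n_i \arrow[r,"a"] \arrow[d] & X \arrow[d,"p"] \\
\Delta^n \arrow[r,"b"] & S
\end{tikzcd}
\]
the first step is to use the hypothesis that $f\colon T\to S$ is surjective to choose an $n$-simplex $\tilde{b}\colon\Delta^n\to T$ with $f\tilde{b}=b$ (surjectivity of $f$ as a map of simplicial sets gives surjectivity on $n$-simplices, which is what is needed here). The composite $\Lambda^n_i\hookrightarrow\Delta^n\xrightarrow{\tilde{b}}T$ together with $a\colon\Lambda^n_i\to X$ then agree in $S$, so by the universal property of the pullback square defining $Y$ they induce a map $\tilde{a}\colon\Lambda^n_i\to Y$ lying over $\tilde{b}|\Lambda^n_i$.

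Next I would apply the hypothesis that $q\colon Y\to T$ is an inner fibration to the resulting commutative square
\[
\begin{tikzcd}
\Lambda^n_i \arrow[r,"\tilde{a}"] \arrow[d] & Y \arrow[d,"q"] \\
\Delta^n \arrow[r,"\tilde{b}"] \arrow[ur,dashed,"h"] & T
\end{tikzcd}
\]
obtaining a diagonal filler $h\colon\Delta^n\to Y$. The final step is to take the composite of $h$ with the projection $Y\to X$. On the horn $\Lambda^n_i$ this composite equals $a$ (by construction of $\tilde{a}$ and the fact that the top edge of the pullback square is $Y\to X$), and its composite with $p$ equals $fq h=f\tilde{b}=b$, so it provides the desired diagonal filler for the original square.

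The argument is essentially a diagram chase, and the only real content is the use of surjectivity to lift $b$ through $f$; there is no genuine obstacle. One might note that for this argument one only needs $f$ to be surjective on simplices of each positive dimension in which lifts against inner horns are tested, but stating the hypothesis as plain surjectivity is both simpler and sufficient.
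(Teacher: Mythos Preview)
Your proof is correct and follows essentially the same approach as the paper's own argument: lift the $n$-simplex through the surjection $f$, factor the horn map through the pullback $Y$, solve the lifting problem against the inner fibration $q$, and project back to $X$. Your write-up is in fact slightly more explicit than the paper's about why the horn map factors through $Y$ (via the universal property of the pullback) and about why the resulting filler works, but the substance is identical.
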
 

\begin{proof} 
We prove that $p$ is an inner fibration.  
For this, we note that, since $f$ is surjective, 
for any commutative diagram of the form 
\begin{equation}
\label{eq:filler diagram}
\begin{tikzcd} 
\Lambda^n_k \arrow[r] \arrow[d] & X \arrow[d,"p"] \\ 
\Delta^n \arrow[r,"u"] & S,
\end{tikzcd} 
\end{equation}
the map $u$ factors as $fv$ for some map 
$v\colon \Delta^n\to T$.  It follows that the map 
$\Lambda^n_k\to X$ factors through $Y$ so 
that we have a commutative diagram 
\[
\begin{tikzcd}
\Lambda^n_k \arrow[r] \arrow[d] & Y \arrow[d,"q"] \\ 
\Delta^n \arrow[r,"v"] & T
\end{tikzcd} 
\] 
Since $q$ is an inner fibration we may 
find a diagonal filler in this diagram 
and hence in the diagram~\eqref{eq:filler diagram}.  
Therefore $p$ is an inner fibration.     
\end{proof} 

\subsection{Colimits and descent} 
In this section we give some examples 
of descent for coprojection maps into 
cocones on certain colimit diagrams.  

\begin{lemma} 
\label{lem:seq colimit descent}
Suppose given a commutative diagram of simplicial sets 
\[
\begin{tikzcd} 
X_0 \arrow[d] \arrow[r] & 
X_1 \arrow[d] \arrow[r] & 
X_2 \arrow[d] \arrow[r] & \cdots \\ 
S_0 \arrow[r] & 
S_1 \arrow[r] & 
S_2 \arrow[r] & 
\cdots 
\end{tikzcd} 
\]
in which each square is a pullback, 
all horizontal maps are monomorphisms, and all 
vertical maps are inner fibrations.  Then 
the inner fibration $X_m\to S_m$ satisfies 
descent with respect to the canonical 
map $S_m\to \varinjlim S_m$ for every 
$m\geq 0$.  
\end{lemma}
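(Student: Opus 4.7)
The plan is to set $T := \varinjlim_n S_n$ and $Y := \varinjlim_n X_n$, and take $q\colon Y\to T$ to be the induced map between colimits. Two things then need verification: first, that for every $m$ the square
\[
\begin{tikzcd}
X_m \arrow[r] \arrow[d] & Y \arrow[d,"q"] \\
S_m \arrow[r] & T
\end{tikzcd}
\]
is a pullback; and second, that $q$ is an inner fibration.

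For the pullback claim, note first that since each transition $S_n\to S_{n+1}$ is a monomorphism, each structure map $S_k\to T$ into the sequential colimit is also a monomorphism (and likewise for the $X_k$). Using the fact that filtered colimits commute with finite limits in $\sSet$, I would compute
\[
S_m\times_T Y \cong \varinjlim_n\bigl(S_m\times_T X_n\bigr).
\]
For $n\geq m$, the monomorphism $S_n\hookrightarrow T$ combined with the iterated pullback hypothesis gives $S_m\times_T X_n = S_m\times_{S_n}X_n = X_m$. For $n<m$, the map $X_n\to T$ already factors through $S_m\hookrightarrow T$, and since $S_m\to T$ is monic, $S_m\times_T X_n = X_n$. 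The resulting diagram is
\[
X_0\to X_1\to\cdots \to X_{m-1}\to X_m\xrightarrow{\id} X_m\xrightarrow{\id} \cdots,
\]
whose colimit is $X_m$, as required.

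For the inner fibration claim, I would exploit compactness of $\Lambda^n_i$ and $\Delta^n$. Given any lifting problem
\[
\begin{tikzcd}
\Lambda^n_i \arrow[d] \arrow[r,"a"] & Y \arrow[d,"q"] \\
\Delta^n \arrow[r,"b"] & T
\end{tikzcd}
\]
with $0<i<n$, both $a$ and $b$ factor through $X_k$ and $S_k$ respectively for $k$ large enough. Because $X_k\to Y$ and $S_k\to T$ are monomorphisms, the outer square lifts to a commutative square at level $k$, and a diagonal filler there exists since $X_k\to S_k$ is an inner fibration; the induced map $\Delta^n\to X_k\to Y$ solves the original problem.

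The main obstacle is the careful use of the monomorphism hypotheses to transfer the lifting problem down to a finite level and to identify the various pullbacks correctly; once one has that sequential colimits of monomorphisms in $\sSet$ yield monomorphisms into the colimit, and that filtered colimits commute with finite limits, everything else is essentially bookkeeping.
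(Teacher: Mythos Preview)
Your proposal is correct and follows essentially the same strategy as the paper: set $Y=\varinjlim X_n$, $T=\varinjlim S_n$, verify that each square $X_m\to Y$, $S_m\to T$ is a pullback, and then use compactness of $\Delta^n$ to reduce any inner-horn lifting problem against $Y\to T$ to one at a finite stage. The only cosmetic difference is that the paper, having already established the pullback squares, factors the map $\Delta^n\to T$ through some $S_m$ and then invokes the pullback $X_m=S_m\times_T Y$ to obtain the factorization of $\Lambda^n_i\to Y$ through $X_m$ automatically, whereas you factor the horn map separately via compactness and then appeal to the monomorphism $S_k\hookrightarrow T$ to check commutativity at level $k$; both routes are equally valid.
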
 

\begin{proof} 
Let $Y = \varinjlim X_m$ and let 
$T = \varinjlim S_m$.  Under the given hypotheses, the squares 
\[
\begin{tikzcd} 
X_m \arrow[r] \arrow[d] & Y \arrow[d] \\ 
S_m \arrow[r] & T 
\end{tikzcd} 
\] 
are pullbacks for every $m$.  It follows 
that any commutative diagram of the form 
\begin{equation}
\label{eq:first square1}
\begin{tikzcd} 
\Lambda^n_i \arrow[d] \arrow[r] & Y \arrow[d] \\ 
\Delta^n \arrow[r] & T 
\end{tikzcd} 
\end{equation}
can be extended to a commutative diagram of 
the form 
\begin{equation} 
\label{eq:composite squares}
\begin{tikzcd} 
\Lambda^n_i \arrow[r] \arrow[d] & 
X_m \arrow[d] \arrow[r] & Y \arrow[d] \\ 
\Delta^n \arrow[r] & 
S_m \arrow[r] & T 
\end{tikzcd} 
\end{equation} 
for some $m\geq 0$.  A choice of diagonal 
filler for the left hand square in~\eqref{eq:composite squares} 
determines a diagonal filler for the 
square~\eqref{eq:first square1}.      
\end{proof} 

\begin{remark}
\label{rem:seq colimit descent} 
One outcome of the proof of 
Lemma~\ref{lem:seq colimit descent} is that under the 
stated hypotheses 
the canonical map 
\[
\varinjlim X_m \to \varinjlim S_m 
\]
is an inner fibration.  
\end{remark}

\begin{lemma} 
\label{lem:pushout descent}
Suppose given a commutative diagram of simplicial sets 
\[
\begin{tikzcd}
X_1 \arrow[d] & X_0 \arrow[l] \arrow[r] \arrow[d] 
& X_2 \arrow[d]                                    \\ 
S_1 & \arrow[l] S_0 \arrow[r] & S_2 
\end{tikzcd} 
\]
in which both squares are pullbacks, the arrow 
$S_0\to S_2$ is a monomorphism, and all vertical 
maps are inner fibrations.  Then for $i=1$ and $i=2$ the inner 
fibration $X_i\to S_i$ satisfies descent 
with respect to the canonical map $S_i\to S$, 
where $S = S_1\cup_{S_0}S_2$.  
\end{lemma}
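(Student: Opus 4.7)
The plan is to construct the required total space as the pushout $X = X_1 \cup_{X_0} X_2$ equipped with its canonical map $p \colon X \to S$, and then verify the two requirements of the definition of descent: first, that the two squares
\[
\begin{tikzcd}
X_i \arrow[d] \arrow[r] & X \arrow[d,"p"] \\
S_i \arrow[r] & S
\end{tikzcd}
\]
are pullbacks for $i = 1, 2$, and second, that $p$ is an inner fibration.

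For the pullback squares, I would use that $\sSet$ is a topos: since $S_0 \to S_2$ is a monomorphism, the pushout square defining $S$ is also a pullback, each leg $S_i \to S$ is a monomorphism, and pulling back along any map preserves colimits. Computing $X \times_S S_1$ as the pushout of $X_1 \times_S S_1 \leftarrow X_0 \times_S S_1 \rightarrow X_2 \times_S S_1$, and using the identifications $X_1 \times_S S_1 = X_1$, $X_0 \times_S S_1 = X_0$, and $X_2 \times_S S_1 = X_2 \times_{S_2} (S_2 \times_S S_1) = X_2 \times_{S_2} S_0 = X_0$ (the last equality holding because the right-hand square of the hypothesis is a pullback), the pushout collapses to $X_1$. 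A symmetric calculation handles the $i=2$ case.

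To show $p$ is an inner fibration, I would appeal to Joyal's descent lemma (Lemma~\ref{lem:joyals descent lemma}) applied to the canonical map $f \colon S_1 \sqcup S_2 \to S$. This map is surjective since every simplex of the pushout $S$ lifts to either $S_1$ or $S_2$, and the pullback of $p$ along $f$ is $X_1 \sqcup X_2 \to S_1 \sqcup S_2$ (using that disjoint unions distribute over pullbacks), which is an inner fibration because each $X_i \to S_i$ is.

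The main point requiring care is the computation of $X \times_S S_i$, which genuinely uses the hypothesis that $S_0 \to S_2$ is a monomorphism; this ensures the pushout square defining $S$ is van Kampen, so that the pullbacks of the pushout decompose as expected. Once this is in hand, the application of Joyal's lemma is formal.
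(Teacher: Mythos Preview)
Your approach is exactly the paper's: form $X = X_1\cup_{X_0}X_2$, check that the two squares over $S_i \to S$ are pullbacks, and then apply Lemma~\ref{lem:joyals descent lemma} to the surjection $S_1\sqcup S_2 \to S$. The paper simply asserts the pullback property without further comment, so your added detail is welcome.

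One slip to flag: you write that ``each leg $S_i \to S$ is a monomorphism'', but only $S_1 \to S$ is guaranteed to be monic (as the pushout of the mono $S_0 \to S_2$); the map $S_2 \to S$ is monic only when $S_0 \to S_1$ is, which is not assumed. Consequently the ``symmetric calculation'' for $i=2$ does not go through verbatim: computing $X_2\times_S S_2$ would require knowing $S_2\times_S S_2$, which is not $S_2$ in general. Your explicit computation for $i=1$ is fine, since it only uses that $S_1 \to S$ is monic and that the pushout square for $S$ is also a pullback. For $i=2$ you should instead invoke directly the van~Kampen property you mention at the end (pushouts along monomorphisms in a topos are van~Kampen), which gives both pullback squares at once without any symmetry claim. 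With that correction the argument is complete and matches the paper's.
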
 

\begin{proof} 
Let $X = X_1\cup_{X_0}X_2$ and let $S = S_1\cup_{S_0}S_2$.  
Under the given hypotheses the squares 
\[
\begin{tikzcd} 
X_i \arrow[r] \arrow[d] & X \arrow[d] \\ 
S_i \arrow[r] & S 
\end{tikzcd} 
\]
are pullbacks for $i=1$ and $i=2$.  The result then 
follows from Lemma~\ref{lem:joyals descent lemma}, 
using the fact that the canonical 
map $S_1\sqcup S_2 \to S$ 
is surjective.  
\end{proof} 

\begin{remark} 
\label{rem:pushout descent}
If follows from the proof of Lemma~\ref{lem:pushout descent} 
that the canonical map 
\[
X_1\cup_{X_0}X_2\to S_1\cup_{S_0}S_2  
\]
is an inner fibration.  Note also that if 
$S_0\to S_1$ and $X_0\to X_1$ are inner anodyne 
then the canonical maps $S_2\to S$ and 
$X_2\to X$ are also inner anodyne.  
\end{remark} 

\subsection{Descent and the small object argument} 
\label{sec:descent and so arg} 
Suppose $\Sigma$ is a set of inner anodyne  
in $\sSet$ with small domains.  Let $S$ be a simplicial set.  
Construct an inner anodyne map $S\to T$ by the following 
mild variation of the small object argument.  
  Define a 
sequence of inner anodyne maps 
\[
S = S(0) \to S(1) \to S(2) \to \cdots 
\]
inductively as follows.  Assuming that $S(m)$ has been 
defined, suppose given a set $A_m$ of maps 
$\alpha\colon I_{\alpha} \to S(m)$, where $I_{\alpha}$ 
is the domain of a map in $\Sigma$ for each $\alpha\in A_m$.  
Define $S(m+1)$ by the pushout diagram 
\[
\begin{tikzcd} 
\bigsqcup_{\alpha\in A_m} I_{\alpha} 
\arrow[d,"{\sqcup f_{\alpha}}"] 
\arrow[r,"\phi"] & S(m) \arrow[d]           \\ 
\bigsqcup_{\alpha\in A_m} J_{\alpha} 
\arrow[r] & S(m+1) 
\end{tikzcd} 
\]
where $\phi$ is equal to the map $\alpha\colon I_{\alpha}
\to S(m)$ on the summand labelled by $\alpha$, and where 
$f_{\alpha}\colon I_{\alpha}\to J_{\alpha}$ 
is a map in $\Sigma$ for all $\alpha\in A_m$.  Let 
$T = \varinjlim S(m)$ so that we have a canonical map 
$S\to T$.  

\begin{proposition} 
\label{prop:tech prop1}
Suppose that every map $\alpha\colon I_{\alpha}\to S(m)$ 
in $A_m$ has the following property: if $Z\to S(m)$ is an inner 
fibration, then the induced inner fibration $\alpha^*Z\to I_{\alpha}$ 
satisfies inner anodyne descent with respect to the map $f_{\alpha}\colon 
I_{\alpha}\to J_{\alpha}$.  Then the following is true: 
every inner fibration $p\colon X\to S$ satisfies 
inner anodyne descent with respect to the map $S\to T$.  
\end{proposition}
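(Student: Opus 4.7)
The plan is to proceed by induction on $m$ and construct a compatible family of inner fibrations $q_m\colon Y(m)\to S(m)$ together with inner anodyne maps $X\to Y(m)$ such that each square
\[
\begin{tikzcd}
X \arrow[r] \arrow[d,"p"'] & Y(m) \arrow[d,"q_m"] \\
S \arrow[r] & S(m)
\end{tikzcd}
\]
is a pullback. Setting $Y = \varinjlim Y(m)$ and passing to the colimit will then produce the inner fibration $q\colon Y\to T$ and the inner anodyne map $X\to Y$ realizing inner anodyne descent of $p$ along $S\to T$.

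For the base case take $Y(0) = X$. For the inductive step, suppose $q_m$ has been constructed. For each $\alpha \in A_m$ pull back $q_m$ along $\alpha\colon I_\alpha \to S(m)$ to obtain an inner fibration $Z_\alpha \to I_\alpha$. By the hypothesis of the proposition, this inner fibration satisfies inner anodyne descent along $f_\alpha\colon I_\alpha \to J_\alpha$, so there is an inner fibration $W_\alpha \to J_\alpha$ and an inner anodyne map $Z_\alpha \to W_\alpha$ fitting into a pullback square. Define $Y(m+1)$ as the pushout of $Y(m) \leftarrow \bigsqcup_\alpha Z_\alpha \to \bigsqcup_\alpha W_\alpha$. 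Lemma~\ref{lem:pushout descent} applies because $\bigsqcup_\alpha I_\alpha \to \bigsqcup_\alpha J_\alpha$ is inner anodyne and hence a monomorphism; it yields both the fact that $Y(m+1)\to S(m+1)$ is an inner fibration and that the resulting square over $S(m)\hookrightarrow S(m+1)$ is a pullback, while Remark~\ref{rem:pushout descent} ensures the transition map $Y(m) \to Y(m+1)$ is inner anodyne. Composing pullback squares yields the required pullback over $S\to S(m+1)$ and composing the inner anodyne maps closes the inductive loop.

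Finally, setting $Y = \varinjlim Y(m)$, Lemma~\ref{lem:seq colimit descent} together with Remark~\ref{rem:seq colimit descent} shows that the induced map $q\colon Y\to T$ is an inner fibration and that $Y\times_T S(m) = Y(m)$ for every $m$; in particular $Y\times_T S = X$, so $p$ satisfies descent with respect to $S\to T$. The map $X\to Y$ is inner anodyne as a transfinite composition of the inner anodyne maps $Y(m)\to Y(m+1)$, which establishes inner anodyne descent. The main obstacle is the inductive step, where one must verify simultaneously that the pushout produces an inner fibration whose square is a pullback and that the transition map is inner anodyne; this is precisely what Lemma~\ref{lem:pushout descent} and Remark~\ref{rem:pushout descent} are designed to deliver, provided the descent hypothesis is applied at the correct stage $S(m)$ rather than at $S$ itself.
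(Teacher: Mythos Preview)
Your proof is correct and follows essentially the same approach as the paper: construct the tower $Y(m)\to S(m)$ inductively using Lemma~\ref{lem:pushout descent} and Remark~\ref{rem:pushout descent} for the pushout step, then pass to the colimit using Lemma~\ref{lem:seq colimit descent}. The only minor quibble is that Remark~\ref{rem:pushout descent} as stated addresses the other leg of the pushout; the fact you actually need, that $Y(m)\to Y(m+1)$ is inner anodyne, follows directly from the standard fact that inner anodyne maps are stable under pushout (since $\bigsqcup_\alpha Z_\alpha\to \bigsqcup_\alpha W_\alpha$ is inner anodyne).
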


\begin{proof} 
Define $X(0) = X$.  Under the above hypothesis it 
follows from Lemma~\ref{lem:pushout descent} and 
Remark~\ref{rem:pushout descent}
that we can construct inductively a sequence 
of inner fibrations $p(m)\colon X(m)\to S(m)$ such that 
for every $m\geq 0$ we have a pullback diagram 
\[
\begin{tikzcd} 
X(m) \arrow[d,"{p(m)}"] \arrow[r] & 
X(m+1) \arrow[d,"{p(m+1)}"]         \\ 
S(m) \arrow[r] & S(m+1) 
\end{tikzcd} 
\]
in which the horizontal maps are inner anodyne.  
Define $Y = \varinjlim X(m)$.  Then the canonical 
map $q\colon Y\to T$ is an inner fibration and the 
diagram 
\[
\begin{tikzcd} 
X \arrow[r] \arrow[d,"p"] 
& Y \arrow[d,"q"]        \\ 
S \arrow[r] & T 
\end{tikzcd} 
\]
is a pullback by Lemma~\ref{lem:seq colimit descent}.  
It is straightforward to check that the horizontal maps 
in this diagram are inner anodyne.    
\end{proof}

\subsection{Descent for inner anodyne maps} 
Every inner fibration $X\to \Lambda^2_1$ 
satisfies inner anodyne descent with respect to the inclusion 
$\Lambda^2_1\subseteq \Delta^2$.  
More precisely we have the following lemma.

\begin{lemma} 
\label{lem:descent for spine inclusions}
Suppose that $p\colon X\to \Lambda^2_1$ is an inner 
fibration of simplicial sets where $n\geq 2$.  
Then there is a pullback diagram 
\[
\begin{tikzcd} 
X\arrow[r,"v"]\arrow[d,"p"] 
& \calC \arrow[d,"q"] \\ 
\Lambda^2_1 \arrow[r] & \Delta^2 
\end{tikzcd} 
\]
in which the map $v$ is inner anodyne and $q$ is an inner fibration.    
\end{lemma}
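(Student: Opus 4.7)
The plan is to build $\calC$ over $\Delta^2$ by a controlled small object argument relative to the composition $X\to \Lambda^2_1\subseteq \Delta^2$. Set $\calC^{(0)}=X$ and define $\calC^{(m+1)}$ by the pushout
\[
\begin{tikzcd}
\bigsqcup_{\alpha\in A_m} \Lambda^{n_\alpha}_{k_\alpha} \arrow[r] \arrow[d] & \calC^{(m)} \arrow[d]\\
\bigsqcup_{\alpha\in A_m} \Delta^{n_\alpha} \arrow[r] & \calC^{(m+1)}
\end{tikzcd}
\]
where $A_m$ indexes all maps $\alpha\colon \Lambda^{n_\alpha}_{k_\alpha}\to \calC^{(m)}$ with $0<k_\alpha<n_\alpha$ whose composite with the structural projection $\calC^{(m)}\to \Delta^2$ does not factor through $\Lambda^2_1$. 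Since $\Delta^2$ is the nerve of the poset $[2]$, every map $\Lambda^n_k\to \Delta^2$ with $0<k<n$ admits a unique extension to $\Delta^n\to \Delta^2$, so the projection extends uniquely to $\calC^{(m+1)}\to \Delta^2$. Taking $\calC=\varinjlim_m \calC^{(m)}$ with induced map $q\colon \calC\to \Delta^2$ defines the desired candidate, and the inclusion $v\colon X\to \calC$ is inner anodyne as a transfinite composition of pushouts of inner horn inclusions.

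The main combinatorial fact to exploit is that a simplex $\Delta^n\to \Delta^2$ factors through $\Lambda^2_1$ if and only if its set of vertex images omits $0$ or omits $2$; moreover, for $n\geq 2$ and $0<k<n$, both $\Lambda^n_k$ and its unique extension $\Delta^n$ contain every vertex of $\{0,\ldots,n\}$, and the new face $d_k\Delta^n$ still contains both vertices $0$ and $n$. As a consequence, whenever a horn $\Lambda^n_k\to \Delta^2$ in $A_m$ has image containing both $0$ and $2$, so do both its unique extension $\Delta^n\to \Delta^2$ and the face $d_k\Delta^n\to \Delta^2$. The only simplices adjoined at each stage therefore have projections lying outside $\Lambda^2_1$, and an induction on $m$ gives $\calC^{(m)}\times_{\Delta^2}\Lambda^2_1=X$ for every $m$; passing to the filtered colimit yields the desired pullback square.

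It remains to verify that $q$ is an inner fibration. Any map $\Lambda^n_k\to \calC$ with $0<k<n$ factors through some $\calC^{(m)}$ since $\Lambda^n_k$ is a finite simplicial set and $\calC$ is a filtered colimit. If the composite projection to $\Delta^2$ does not factor through $\Lambda^2_1$, then the horn appears in $A_m$ and a filler is attached at stage $m+1$; if the projection does factor through $\Lambda^2_1$, the pullback property already established forces the horn to factor through $X$, and the hypothesis that $p\colon X\to \Lambda^2_1$ is an inner fibration, together with the induced extension $\Delta^n\to \Lambda^2_1$, supplies a filler inside $X\subseteq \calC$. The main obstacle in the argument is the inductive preservation of the pullback property as new simplices are attached, and this is precisely what the combinatorial restriction on $A_m$ and the vertex-set observation are designed to ensure.
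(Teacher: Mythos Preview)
Your overall strategy matches the paper's, but there is a genuine gap in how you define the index set $A_m$. You require that the composite $\Lambda^{n}_{k}\to\calC^{(m)}\to\Delta^2$ fail to factor through $\Lambda^2_1$, whereas the paper imposes this condition on the unique extension $\Delta^n\to\Delta^2$. These are not equivalent when $n=2$: if $\beta\colon\Delta^2\to\Delta^2$ is the identity, then the restricted horn $\Lambda^2_1\to\Delta^2$ \emph{does} factor through $\Lambda^2_1$ even though $\beta$ does not. With your convention such horns are never placed in any $A_m$ and never filled. Concretely, take $X=\Lambda^2_1$ with $p$ the identity: every horn $\Lambda^n_k\to X$ has projection factoring through $\Lambda^2_1$, so $A_m=\emptyset$ for all $m$, your construction returns $\calC=\Lambda^2_1$, and $q\colon\Lambda^2_1\hookrightarrow\Delta^2$ is not an inner fibration. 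The same issue reappears in your final paragraph: in case (b) you invoke ``the induced extension $\Delta^n\to\Lambda^2_1$'', but no such extension exists when $n=2$ and $\beta$ is surjective.

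The fix is exactly what the paper does: index $A_m$ by those horns whose \emph{extended} simplex $\Delta^n\to\Delta^2$ does not factor through $\Lambda^2_1$. Your combinatorial argument for the pullback property then goes through unchanged, since the newly adjoined cells (the top simplex and the missing face $d_k$) still hit both $0$ and $2$. The case split for the inner-fibration check also becomes clean: if $\Delta^n\to\Delta^2$ factors through $\Lambda^2_1$ then so does the horn, which therefore lies in $X$, and you may fill against $p$; otherwise the horn is in $A_m$ and a filler is attached at the next stage. (The paper phrases the first branch slightly differently, observing that $\beta$ must land in some $\Delta^{\{i,i+1\}}$ and that $X|\Delta^{\{i,i+1\}}$ is an $\infty$-category; your appeal to $p$ directly is equally valid once $A_m$ is corrected.)
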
 

\begin{proof} 
We use a modification of the small object argument: we consider 
diagrams 
\begin{equation}
\tag{$D$}
\begin{tikzcd} 
\Lambda^m_i \arrow[d] 
\arrow[r] & X \arrow[d] \\
\Delta^m \arrow[r,"{\beta}"] & \Delta^2 
\end{tikzcd}
\end{equation}
in which $\beta$ does not factor 
through $\Lambda^2_1$ and where $0<m<i$.  
We define a simplicial set $\calC(1)$ by the pushout 
\[
\begin{tikzcd} 
\bigsqcup_{D}\Lambda^m_i \arrow[d] 
\arrow[r] & X  \arrow[d] \arrow[d] \\ 
\bigsqcup_{D} \Delta^m 
\arrow[r,"{\beta_D}"] & \calC(1) 
\end{tikzcd}
\]
in which the coproducts are taken over all 
diagrams $(D)$ as above.  The key point is that 
the canonical map $X\to \Lambda^2_1\times_{\Delta^2} 
\calC(1)$ is an isomorphism.  To see this it suffices 
to prove that for each diagram $(D)$ as above, 
the map $\beta$ induces an isomorphism 
\begin{equation}
\label{eq:p simplices}
(\Lambda^2_1\times_{\Delta^2}\Lambda^m_i)_{p} = 
(\Lambda^2_1\times_{\Delta^2}\Delta^m)_{p}
\end{equation}
on $p$-simplices for all $p\geq 0$.  If $p\leq m-2$ 
this is clear, since $(\Lambda^m_i)_p = 
(\Delta^m)_p$ in this case.  
Suppose that $\beta(\partial_i\Delta^{m-1})$ 
is the image of an $(m-1)$-simplex of 
$\Lambda^2_1$ in $\Delta^2$.  
This can only happen if $\beta$ restricts to a 
map $\partial_i\Delta^{m-1}\to \Delta^{\set{k,k+1}}$ for some $k\geq 0$.  
In particular $\beta(0),\beta(m)\in \set{k,k+1}$.  
Since $\beta(0)\leq \beta(i)\leq \beta(m)$ it follows 
that $\beta$ factors through $\Lambda^2_1$, 
contradicting the hypothesis that it does not.  
It follows that~\eqref{eq:p simplices}  
holds when $p=m-1$.  In a similar fashion one 
proves that~\eqref{eq:p simplices} holds for all $p\geq m$.  

Returning to the modification of the small object argument, we 
continue in this manner to obtain the usual sequence 
\[
X \to \calC(1)\to \calC(2) \to \cdots \to \calC(n)\to \cdots 
\]
where each map is inner anodyne.  The 
argument above shows that we have isomorphisms 
$X\to \Lambda^2_1\times_{\Delta^2} \calC(n)$ for 
each $n\geq 1$.  Let $\calC$ denote the colimit of the $\calC(n)$'s.  
The canonical map $X\to \calC$ is inner anodyne and induces an isomorphism 
$X\to \Lambda^2_1\times_{\Delta^2}\calC$.  The canonical 
map $\calC\to \Delta^2$ from the colimit is still an inner fibration, for given 
a commutative diagram 
\[
\begin{tikzcd} 
\Lambda^m_i \arrow[d] 
\arrow[r,"{\alpha}"] & \calC\arrow[d] \\ 
\Delta^m \arrow[r,"{\beta}"] & \Delta^2 
\end{tikzcd} 
\]
in which $0<i<m$, if the map $\beta$ does not factor through $\Lambda^2_1$ then 
a diagonal filler exists by construction.  
Otherwise, if $\beta$ does factor through $\Lambda^2_1$ 
then it must factor through some $\Delta^{\set{i,i+1}}$.  In this 
case the map $\alpha$ must factor through 
$X|\Delta^{\set{i,i+1}}$ and hence the required diagonal filler exists 
since $X|\Delta^{\set{i,i+1}}$ is an $\infty$-category.     
\end{proof} 

\begin{remark} 
In fact, it is easy to extend the above argument 
to show that every inner fibration $X\to I_n$ 
satisfies descent with respect to the spine 
inclusion $I_n\subseteq \Delta^n$ (see 
Example~\ref{ex:spine inclusion}). 
\end{remark}

The story for inner horn inclusions 
$\Lambda^n_i\subseteq \Delta^n$ 
with $n\geq 3$ is more complicated.  
In general it is not true that 
every inner fibration $X\to \Lambda^n_i$ 
satisfies descent with respect to the inner horn inclusion 
$\Lambda^n_i\subseteq \Delta^n$.  

\begin{example} 
The inner fibration 
$\Delta^{\set{0,2}}\cup \Delta^{\set{2,3}}
\to \Lambda^3_1$ does not satisfy descent 
with respect to the inner horn inclusion 
$\Lambda^3_1\subseteq \Delta^3$.  
\end{example} 


The following proposition gives a 
sufficient condition for an inner 
fibration over $\Lambda^n_i$ to satisfy 
descent with respect to the inclusion 
$\Lambda^n_i\subseteq \Delta^n$.  It is a 
generalization of Lemma 3.1.2.4 
from \cite{HA}.

\begin{proposition} 
\label{prop:key tech prop2}
Let $p\colon \calC\to \Lambda^n_i$ be an inner fibration where $n\geq 3$.  
Let $q$ denote the projection 
\[
q\colon \Delta^{\set{1,\ldots,n}}\times_{\Lambda^n_i} \calC\to 
\Delta^{\set{1,\ldots,n}}. 
\] 
Suppose that the following conditions are satisfied:  
\begin{enumerate}[label=(\roman*)]
\item for every vertex $x\in \calC_j$, $1\leq j\leq n-2$, there 
exists a $q$-cocartesian morphism $f\colon x\to y$ where 
$y\in \calC_{j+1}$; 

\item for every vertex 
$y\in \calC_j$, $2\leq j\leq n-1$ there exists a 
$q$-cartesian morphism $f\colon x\to y$ where 
$x\in \calC_{j-1}$. 
\end{enumerate}
Then $p\colon \calC\to \Lambda^n_i$ satisfies 
inner anodyne descent with respect to the inclusion 
$\Lambda^n_i\subseteq \Delta^n$.  
\end{proposition}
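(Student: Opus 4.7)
The plan is to construct the extension via a modified small object argument that respects the pullback property. I will factor the composite $\calC \to \Lambda^n_i \hookrightarrow \Delta^n$ as $\calC \xrightarrow{v} \calD \xrightarrow{q} \Delta^n$, where at each stage $\calC(k) \to \calC(k+1)$ we attach fillers only for those inner horns $\Lambda^m_l \to \calC(k)$ whose composite projection to $\Delta^n$ does \emph{not} factor through $\Lambda^n_i$. By construction the map $v$ is inner anodyne.

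The main task is to verify the pullback identity $\calC = \calD \times_{\Delta^n} \Lambda^n_i$. I argue inductively that $\calC(k) \times_{\Delta^n} \Lambda^n_i = \calC$ at every stage. This reduces to the following: when we attach a filler $\sigma$ for an inner horn whose composite projection lies in the missing face $\partial_i \Delta^n$, we must verify that each face of $\sigma$ which projects into $\Lambda^n_i$ (equivalently, into $\partial(\partial_i \Delta^n) \subseteq \Lambda^n_i$) is already present in $\calC$. This is exactly where conditions (i) and (ii) enter: the cocartesian lifts over the edges $j \to j{+}1$ propagate sections of the fibration forward through the internal vertices $1,\ldots,n{-}1$ of $\Delta^{\{1,\ldots,n\}}$, while the cartesian lifts propagate them backward. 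Using these transports I can select horn fillers whose missing face realizes a prescribed simplex of $\calC_F := \Delta^{\{1,\ldots,n\}} \times_{\Lambda^n_i} \calC$, in such a way that the attached simplex has all of its $\Lambda^n_i$-faces matching existing structure in $\calC$ rather than creating new simplices over $\Lambda^n_i$.

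Once the pullback property is established, verifying that $q$ is an inner fibration is a simple case analysis. Given an inner horn $\Lambda^m_l \to \calD$ together with an extension of its composite to $\Delta^m \to \Delta^n$, if this composite factors through $\Lambda^n_i$ then, by the pullback identity, the horn factors through $\calC$ and a filler exists by the inner fibration property of $p$. Otherwise a filler was produced at some stage of the construction, using Quillen's small object argument for the inner horn inclusions.

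The main obstacle lies in the pullback verification. I must carefully track how the boundary combinatorics of newly attached simplices over $\partial_i \Delta^n$ interact with the cocartesian/cartesian transports provided by (i) and (ii), and exhibit each such face lying over $\Lambda^n_i$ as one already appearing in $\calC$. This is most delicate in the intermediate dimensions $3 \leq m \leq n$, where the horn sits over $\partial_i \Delta^n$ and its faces are determined by nontrivial transport across the 0-th face $\Delta^{\{1,\ldots,n\}}$; uniform handling of degenerate fillers and of several simultaneous fillings at a common stage will also require careful bookkeeping.
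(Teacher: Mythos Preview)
Your framework is right—a modified small object argument attaching only those inner horns whose projection to $\Delta^n$ does not factor through $\Lambda^n_i$—but the pullback verification as you describe it does not go through, and the missing mechanism is precisely the content of the proof.

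Here is the obstruction. Suppose $\alpha\colon \Lambda^r_k \to \calC(m)$ is an inner horn whose extension $\bar{\alpha}_0\colon \Delta^r \to \Delta^n$ has image containing $\partial_i\Delta^n$, and suppose the missing face $\partial_k\Delta^r$ is mapped by $\bar{\alpha}_0$ into some $\partial_j\Delta^n \subseteq \Lambda^n_i$ with $j\neq i$ (this happens exactly when $\bar{\alpha}_0^{-1}\{j\} = \{k\}$, and it is the generic situation). If you attach $\Delta^r$ along $\Lambda^r_k$ by pushout, the $k$-th face of the new $r$-simplex is a \emph{formally new} $(r-1)$-simplex lying over $\Lambda^n_i$; it is not something you can ``select'' or arrange to coincide with an existing simplex of $\calC$. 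Conditions (i) and (ii) provide cocartesian and cartesian edges in $\calC$, but a freely attached cell is not in $\calC$ at all, so those edges cannot force its $k$-th face to lie there. The naive attachment therefore breaks the pullback identity already at the first stage, and no amount of bookkeeping repairs this.

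The paper's fix is to change \emph{what} is attached. Given such an $\alpha$, one uses a $q$-cocartesian edge $f\colon X\to Y$ with $X=\alpha(k)\in\calC_j$ and $Y\in\calC_{j+1}$ (or dually a $q$-cartesian edge when $j>i$) to enlarge $\Delta^r$ to $K_3\cong\Delta^{r+1}$ by inserting a new vertex $y$ mapping to $j+1\in\Delta^n$. One then builds, via several inner-anodyne extension steps inside $\calC$, a subcomplex $K_2\subseteq K_3$ containing $\Lambda^r_k$ together with a map $K_2\to\calC(m)$ extending $\alpha$, and attaches $K_3$ along $K_2$. The inclusion $K_2\subseteq K_3$ is inner anodyne, and—crucially—$K_2\times_{\Delta^n}\Lambda^n_i = K_3\times_{\Delta^n}\Lambda^n_i$, so the pullback identity holds \emph{by design} rather than by a face-by-face verification. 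The role of (i) and (ii) is to supply the extra vertex $y$ and the cocartesian/cartesian filling needed to define the map $K_2\to\calC(m)$; they are not used to constrain fillers of a naive pushout, which is what your outline presumes.
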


The proof of Proposition~\ref{prop:key tech prop2} 
is lengthy and is post-poned until 
Section~\ref{sec:proof of gen lurie lemma}.  
We can now state and prove the main result 
of this section.

\begin{proposition}
\label{prop:descent pre-fibrant simp sets}
Let $S$ be a simplicial set.  Then there 
exists an inner anodyne map 
$S\to T$, where $T$ is a pre-fibrant simplicial 
set, with the property that 
every inner fibration $X\to S$ 
satisfies inner anodyne descent with respect to 
$S\to T$.  
\end{proposition}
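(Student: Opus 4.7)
The plan is to apply Proposition~\ref{prop:tech prop1} to a carefully chosen set $\Sigma$ and collection of attaching maps $A_m$ at each stage. Specifically, we take $\Sigma$ to consist of the inner horn inclusions $\Lambda^n_i\subseteq \Delta^n$ with $0<i<n$ and $n\geq 2$, and we build a sequence $S=S(0)\to S(1)\to \cdots$ as in Section~\ref{sec:descent and so arg} by choosing at stage $m$ the set $A_m$ to consist of (a) every map $\alpha\colon \Lambda^2_1\to S(m)$, and (b) every map $\alpha\colon \Lambda^n_i\to S(m)$ with $n\geq 3$, $0<i<n$, and $d_0\alpha$ a constant $(n-1)$-simplex. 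Setting $T := \varinjlim_m S(m)$ then yields an inner anodyne map $S\to T$.

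That $T$ is pre-fibrant should follow by a standard finiteness argument: any $\Lambda^n_i\to T$ whose data satisfies the hypothesis of condition (i) or (ii) in the definition of pre-fibrant factors through some $S(m)$ (since $\Lambda^n_i$ is a finite simplicial set), hence appears in $A_m$ by construction and therefore admits a filler in $S(m+1)\subseteq T$. The substantive work will then be to verify the hypothesis of Proposition~\ref{prop:tech prop1}, namely that for every attaching map $\alpha\colon I_\alpha\to S(m)$ in $A_m$ and every inner fibration $Z\to S(m)$, the pullback $\alpha^*Z\to I_\alpha$ satisfies inner anodyne descent with respect to the corresponding inclusion $I_\alpha\subseteq J_\alpha$.

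For attaching maps of type (a), this is provided directly by Lemma~\ref{lem:descent for spine inclusions}. For type (b), with $d_0\alpha$ constant at some vertex $v$ of $S(m)$, we invoke Proposition~\ref{prop:key tech prop2}. The key observation is that, because $d_0\alpha$ is constant at $v$, the restriction of $\alpha^*Z\to \Lambda^n_i$ over the face $\Delta^{\{1,\ldots,n\}}\subseteq \Lambda^n_i$ is canonically isomorphic to the projection $Z_v\times \Delta^{\{1,\ldots,n\}}\to \Delta^{\{1,\ldots,n\}}$, where $Z_v$ denotes the fiber of $Z\to S(m)$ over $v$ (an $\infty$-category, as the fiber of an inner fibration). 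For such a projection, every edge of the form $(\id_x, e)$ is both $q$-cocartesian and $q$-cartesian (its first component being an equivalence in $Z_v$), so the hypotheses (i) and (ii) of Proposition~\ref{prop:key tech prop2} are satisfied. Proposition~\ref{prop:tech prop1} then delivers inner anodyne descent for every inner fibration $X\to S$ along $S\to T$.

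The main obstacle will be Proposition~\ref{prop:key tech prop2} itself, whose proof the paper defers to a later section; once that technical input is in hand, the construction sketched above is essentially a bookkeeping exercise in the small object argument, and the verification that the restricted fibration is literally a product $Z_v\times \Delta^{\{1,\ldots,n\}}$ is immediate from the universal property of the pullback together with constancy of $d_0\alpha$.
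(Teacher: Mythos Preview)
Your proposal is correct and follows essentially the same argument as the paper: the same restricted small object argument (attaching all $\Lambda^2_1$-horns and only those higher inner horns with constant $d_0$), the same appeal to Lemma~\ref{lem:descent for spine inclusions} for $n=2$ and Proposition~\ref{prop:key tech prop2} for $n\geq 3$, and the same observation that constancy of $d_0\alpha$ forces the restricted fibration over $\Delta^{\{1,\ldots,n\}}$ to be a product with fiber $Z_v$. Your write-up is in fact slightly more explicit than the paper's in justifying why $T$ is pre-fibrant and why the product projection satisfies the cocartesian/cartesian lifting hypotheses.
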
 

\begin{proof} 
We use the modification of the small object argument 
described in Section~\ref{sec:descent and so arg} in 
which $\Sigma$ is the set of inner horn inclusions 
$\Lambda^n_i\subseteq \Delta^n$, $0<i<n$, in $\sSet$, 
and where $A_m$ is the set of maps $\alpha\colon 
\Lambda^n_i\to S(m)$ which satisfy $d_0(\alpha)$ 
is constant if $n>2$.  This produces an 
inner anodyne map $S\to T$, where 
$T$ is a pre-fibrant simplicial set.  
By Proposition~\ref{prop:tech prop1} 
it suffices to prove that for any map $\alpha\in A_m$, 
and for any inner fibration $Z\to S(m)$, the induced 
inner fibration $\alpha^*Z\to \Lambda^n_i$ satisfies 
inner anodyne descent with respect to the inclusion $\Lambda^n_i 
\subseteq \Delta^n$.  If $n=2$ this 
follows from Lemma~\ref{lem:descent for spine inclusions}.  
Therefore it suffices to 
verify that for any map $\alpha\colon \Lambda^n_i\to S(m)$ 
in $A_m$ with $n\geq 3$, and 
for any inner fibration $Z\to S(m)$, the 
hypotheses of Proposition~\ref{prop:key tech prop2} 
are satisfied by the inner fibration $\alpha^*Z\to 
\Lambda^n_i$.  Suppose then that $\alpha\colon \Lambda^n_i\to 
S(m)$ satisfies $d_0(\alpha)$ is constant.  Let $\calC
\to \Lambda^n_i$ denote the induced inner fibration 
$\alpha^*Z\to \Lambda^n_i$.  The projection 
\[
q\colon \Delta^{\set{1,\ldots,n}}\times_{\Lambda^n_i} 
\calC\to \Delta^{\set{1,\ldots,n}} 
\]
is the induced inner fibration 
\[
d_0(\alpha)^*Z\to \Delta^{\set{1,\ldots,n}} 
\]
Since $d_0(\alpha)$ is constant this induced inner 
fibration is isomorphic to one of the form 
\[
\Delta^{\set{1,\ldots,n}}\times Z_{v}\to 
\Delta^{\set{1,\ldots,n}} 
\]
where $Z_{v}$ denotes the fiber of $Z$ 
over some vertex $v\in S(m)$;  this  
inner fibration clearly satisfies the hypotheses 
of Proposition~\ref{prop:key tech prop2}
\end{proof}

\section{Inner anodyne maps and the inner model structure} 
\label{sec:inner anodyne stuff}

In this section we study inner anodyne maps in a 
little more detail.  We give some characterizations 
of the class of inner anodyne maps and prove 
Theorem~\ref{thm:C}.  We introduce the 
inner model structure and prove Theorem~\ref{thm:D}.  

\subsection{Examples and basic properties} 
Recall that the class of inner anodyne 
maps in $\sSet$ is the saturated class 
of monomorphisms generated by the 
inner horn inclusions $\Lambda^n_i\subseteq 
\Delta^n$, $0<i<n$.  
It follows immediately that 
every inner anodyne map is bijective on 
vertices, since the inner horn inclusions 
have this property.    

\begin{example} 
\label{ex:spine inclusion}
In addition to being bijective on 
vertices, every 
inner anodyne map is both left and right 
anodyne.  These three properties do not suffice to 
characterize inner anodyne maps, 
as the following example shows.
Regard $\Delta^1\times \Delta^1$ as a 
subcomplex of $\Delta^3$ by identifying 
$\Delta^1\times \Delta^1$ with 
$\Delta^{\set{0,1,3}}\cup 
\Delta^{\set{0,2,3}}$.  Then the inclusion 
$\Delta^1\times \Delta^1\subseteq 
\Delta^3$ is left and right anodyne, 
and is also bijective on vertices.  
But it is not inner anodyne.   
\end{example} 

An important example of inner anodyne 
maps are the {\em spine inclusions}. 

\begin{example} 
For any $n\geq 2$ the 
inclusion $I_n\subseteq \Delta^n$ 
is inner anodyne, where 
\[
I_n = \Delta^{\set{0,1}}\cup 
\cdots \cup \Delta^{\set{n-1,n}} 
\]
denotes the {\em spine} of 
$\Delta^n$ (for a proof that 
$I_n\subseteq \Delta^n$ is inner 
anodyne, see Proposition 2.13 of 
\cite{J2}).    
\end{example} 

\begin{definition}
Recall that a class $\calA$ of monomorphisms 
in $\sSet$ is said to satisfy the {\em right 
cancellation property} if the following condition 
is satisfied: if $u\colon A\to B$ and $v\colon B\to C$ 
are monomorphisms in $\sSet$ such that 
$vu, u\in \calA$, then $v\in \calA$.  
\end{definition} 

It is well-known that the classes of left 
anodyne and right anodyne maps in $\sSet$ 
have the right cancellation property.  
The same is also true for the class 
of inner anodyne maps.    

\begin{lemma}[\cite{S2}]
\label{lem:right cancellation inner} 
The class of inner anodyne morphisms in 
$\sSet$ satisfies the right cancellation 
property.  
\end{lemma}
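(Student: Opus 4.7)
The strategy is to exhibit $v\colon B\to C$ as a retract of an inner anodyne map, and to conclude using the fact that the class of inner anodyne maps is closed under retracts.

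First I would apply the small object argument to factor $v$ as $v = q \circ j$, with $j\colon B\to M$ inner anodyne and $q\colon M\to C$ an inner fibration. The composite $ju\colon A\to M$ is inner anodyne, and $q(ju) = vu$. Since $vu$ is inner anodyne and $q$ is an inner fibration, the square
\[
\begin{tikzcd}
A \arrow[r,"ju"] \arrow[d,"vu"'] & M \arrow[d,"q"] \\
C \arrow[r,equal] \arrow[ur,"s",dashed] & C
\end{tikzcd}
\]
admits a diagonal filler $s\colon C\to M$, which is in particular a section of $q$. If I can further produce a map $\bar s\colon C\to M$ satisfying the stronger identity $\bar s v = j$, then $v$ will be a retract of $j$ via the arrow-category diagram
\[
\begin{tikzcd}
B \arrow[r,equal] \arrow[d,"v"'] & B \arrow[r,equal] \arrow[d,"j"] & B \arrow[d,"v"] \\
C \arrow[r,"\bar s"] & M \arrow[r,"q"] & C
\end{tikzcd}
\]
and so $v$ will be inner anodyne.

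The key intermediate step is to show that $q$ is a trivial Kan fibration, for then its right lifting property against the monomorphism $v$ supplies the desired $\bar s$. Both $ju$ and $vu = q(ju)$ are inner anodyne, hence categorical equivalences, so 2-out-of-3 for categorical equivalences gives that $q$ is itself a categorical equivalence. To upgrade this to a trivial Kan fibration I would pass to the $\infty$-category setting: combining Proposition~\ref{prop:descent pre-fibrant simp sets} with Proposition~\ref{prop:key prop}, choose an inner anodyne map $c\colon C\to \mathcal{D}$ into an $\infty$-category along which $q$ satisfies inner anodyne descent, obtaining a pullback square
\[
\begin{tikzcd}
M \arrow[r,"m"] \arrow[d,"q"'] & \mathcal{M} \arrow[d,"\bar q"] \\
C \arrow[r,"c"'] & \mathcal{D}
\end{tikzcd}
\]
with $m$ inner anodyne and $\bar q$ an inner fibration between $\infty$-categories. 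Again by 2-out-of-3, $\bar q$ is a categorical equivalence; and since $c$ is inner anodyne and $\bar q$ is an inner fibration, the map $m\circ s\colon C\to \mathcal{M}$ lifts along $c$ to a section $\bar\sigma\colon \mathcal{D}\to \mathcal{M}$ of $\bar q$. The existence of this functorial section forces $\h(\bar q)$ to be an isofibration (a fully faithful, essentially surjective functor admitting a section of categories is automatically an isofibration), so $\bar q$ is a categorical fibration between $\infty$-categories. Lemma~\ref{lem:ps fibn + ce implies tkf} then yields that $\bar q$ is a trivial Kan fibration, and stability of trivial Kan fibrations under pullback shows that $q$ is one as well.

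The main obstacle is precisely this upgrade from ``inner fibration + categorical equivalence'' to ``trivial Kan fibration''. The issue is that Lemma~\ref{lem:ps fibn + ce implies tkf} applies only to categorical fibrations between $\infty$-categories, whereas $q$ is a priori only an inner fibration between general simplicial sets, and moreover being a categorical equivalence does not automatically imply being a categorical fibration. The descent machinery of Section~\ref{sec:descent} is what allows one to replace $q$ by an inner fibration $\bar q$ between $\infty$-categories without losing the relevant data, and the hypothesis that $vu$ is inner anodyne then supplies exactly the section of $\bar q$ needed to check the isofibration condition on homotopy categories.
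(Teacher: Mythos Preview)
Your argument is circular within the logical structure of this paper. The paper does not prove Lemma~\ref{lem:right cancellation inner} at all; it imports it from the external reference~\cite{S2} and then \emph{uses} it to build the very machinery you are invoking. Proposition~\ref{prop:descent pre-fibrant simp sets} rests on Proposition~\ref{prop:key tech prop2}, whose proof in Section~\ref{sec:proof of gen lurie lemma} appeals repeatedly to the right cancellation property (directly, and through Lemmas~\ref{lem:pushout lemma for ia}, \ref{lem:5.4.5.10}, \ref{lem:variant of 5.4.5.10}). Likewise Proposition~\ref{prop:key prop} uses Lemma~\ref{lem:ladders of inner anodynes}, whose proof is again an application of right cancellation. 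So your descent step --- replacing $q$ by an inner fibration $\bar q$ between $\infty$-categories --- already presupposes the lemma you are trying to prove.

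Setting the circularity aside, what you have sketched is essentially the argument for Theorem~\ref{thm:main thm for inner fibs} (equivalently Theorem~C): an inner fibration that is a categorical equivalence and a bijection on vertices is a trivial Kan fibration. (Your detour through a section of $\bar q$ to verify the isofibration condition is unnecessary: since $v$, $j$, $c$, $m$ are all bijective on vertices, so is $\bar q$, and hence $\h(\bar q)$ is a bijective-on-objects equivalence of categories, i.e.\ an isomorphism.) The paper's logical flow runs in the opposite direction to yours: right cancellation is taken from~\cite{S2} as input, used to establish the descent results of Section~\ref{sec:descent} and the technical lemmas of Section~\ref{sec:technical lemmas}, and only then is Theorem~\ref{thm:main thm for inner fibs} proved --- from which the full 2-out-of-3 property (Proposition~\ref{prop:equiv statements for ia}) follows. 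A self-contained proof of right cancellation must therefore avoid these later results; the argument in~\cite{S2} does so by a direct combinatorial analysis of inner anodyne maps.
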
  

We shall make extensive use of 
Lemma~\ref{lem:right cancellation inner} 
in Section~\ref{sec:proof of gen lurie lemma} 
but for now let us note that it 
quickly gives the following characterization 
of the class of inner anodyne maps in 
$\sSet$.  

\begin{proposition} 
The class of inner anodyne maps 
in $\sSet$ is the smallest 
saturated class of monomorphisms 
in $\sSet$ which contains the spine inclusions 
$I_n\subseteq \Delta^n$ for 
every $n\geq 2$ and which 
has the right cancellation 
property.  
\end{proposition}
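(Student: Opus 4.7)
The plan is to show both containments for the class $\mathcal{T}$ defined as the smallest saturated class of monomorphisms in $\sSet$ containing the spine inclusions and satisfying the right cancellation property. The containment $\mathcal{T} \subseteq \text{(inner anodyne)}$ is immediate from the minimality of $\mathcal{T}$: the class of inner anodyne morphisms is saturated by definition, contains each spine inclusion by Example~\ref{ex:spine inclusion}, and satisfies right cancellation by Lemma~\ref{lem:right cancellation inner}.

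For the reverse containment it suffices to check that every inner horn inclusion $\Lambda^n_i \hookrightarrow \Delta^n$ with $0<i<n$ lies in $\mathcal{T}$, and I would prove this by induction on $n$. The base case $n=2$ is trivial since $\Lambda^2_1 = I_2$. For the inductive step we exploit the factorization
\[
I_n \hookrightarrow \Lambda^n_i \hookrightarrow \Delta^n
\]
of the spine inclusion. The composite and the first map $I_n \hookrightarrow \Delta^n$ both lie in $\mathcal{T}$, so by the right cancellation property it suffices to show $I_n \hookrightarrow \Lambda^n_i$ belongs to $\mathcal{T}$. I would obtain this map as a finite composition of pushouts, built by attaching the faces $d_j\Delta^n$ with $j \neq i$ to $I_n$ one at a time in a carefully chosen order. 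At each stage, the attachment is a pushout of the inclusion of the current trace on $d_j\Delta^n \cong \Delta^{n-1}$ into $d_j\Delta^n$, and the goal is to arrange that each such inclusion already belongs to $\mathcal{T}$ via the inductive hypothesis together with saturation and right cancellation.

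The main obstacle is this combinatorial choice of ordering. In small cases (e.g.\ $n=3,4$) one checks by inspection that the traces can be arranged to be spine inclusions $I_{n-1}\hookrightarrow\Delta^{n-1}$ or inner horn inclusions $\Lambda^{n-1}_k \hookrightarrow \Delta^{n-1}$, both of which lie in $\mathcal{T}$ by induction or generation. In general the trace may be a more intricate subcomplex of $\Delta^{n-1}$ containing the spine, and its membership in $\mathcal{T}$ is then established by iterating the same factorization-through-an-inner-horn plus right cancellation strategy, organized as a secondary induction on the number of faces already attached. Once this combinatorial scaffolding is in place, composing the resulting pushouts places $I_n \hookrightarrow \Lambda^n_i$ in $\mathcal{T}$, and right cancellation with the spine inclusion completes the proof.
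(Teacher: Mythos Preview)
Your argument for $\mathcal{T} \subseteq$ (inner anodyne) matches the paper's exactly. For the reverse containment the paper takes a one-line shortcut: it simply invokes Lemma~3.5 of \cite{JT}, which asserts precisely that any saturated class of monomorphisms containing the spine inclusions and satisfying right cancellation contains all inner horn inclusions. Your inductive sketch is, in effect, an outline of how one proves that lemma, so the underlying mathematics is the same; the paper just outsources this step while you unpack it.

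Your reduction via the factorization $I_n \hookrightarrow \Lambda^n_i \hookrightarrow \Delta^n$ together with right cancellation is exactly the correct move, and the remaining task --- showing $I_n \hookrightarrow \Lambda^n_i$ lies in $\mathcal{T}$ --- is indeed the substantive combinatorics. One point to sharpen: for an interior face $d_j\Delta^n$ with $0<j<n$, the trace $I_n \cap d_j\Delta^n$ is \emph{not} the spine of $d_j\Delta^n$, since it is missing the edge $\{j-1,j+1\}$; it is instead a disjoint union of two partial spines. So one cannot simply glue top-dimensional faces along spine inclusions in an arbitrary order. The usual fix is to attach simplices in order of increasing dimension (or to first attach the outer faces $d_0\Delta^n$, $d_n\Delta^n$ when available, which do meet $I_n$ in a spine), so that by the time a higher face is attached its trace has been filled out to a spine or inner horn of lower dimension, placing the attachment in $\mathcal{T}$ by induction. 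Carrying this out carefully is exactly the content of the Joyal--Tierney lemma; your sketch has the right architecture but would need this filtration spelled out to stand as a self-contained proof.
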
 

\begin{proof} 
Let $\calA$ be a saturated 
class of monomorphisms which contains 
the spine inclusions $I_n\subseteq 
\Delta^n$ for all $n\geq 2$ and 
which has the right cancellation 
property.  Then $\calA$ contains 
the class of inner anodyne maps 
by Lemma 3.5 of \cite{JT}.  
Conversely, the class of inner 
anodyne maps is saturated, has the 
right cancellation property, and 
contains the spine inclusions.  
The result follows.    
\end{proof} 

One should beware that the 
weakly saturated class 
generated by the set of spine inclusions 
$I_n\subseteq \Delta^n$, $n\geq 2$, 
is not equal to the class of inner anodyne 
maps, as the 
following example shows.  

\begin{example} 
Let $S$ be the 
simplicial set defined by the pushout 
diagram 
\[
\begin{tikzcd} 
I_3 \arrow[d,hook] \arrow[r,hook] 
& \partial\Delta^3 \arrow[d] \\
\Delta^3 \arrow[r] & S 
\end{tikzcd} 
\]
Then for every $n\geq 2$, any 
map $I_n\to S$ extends along the 
inclusion $I_n\subseteq \Delta^n$.  
But the composite map $\Lambda^3_1\to 
\partial\Delta^3\to S$ does 
not extend along the inclusion 
$\Lambda^3_1\subseteq \Delta^3$.  
\end{example}

\subsection{Further properties of inner anodyne maps} 
Our aim in this section is to prove 
Theorem~\ref{thm:C} from the introduction.  
We begin by proving a special case of it, 
Lemma~\ref{lem:easy version of Joyals conj} below.   
We then give some different, equivalent reformulations of Theorem~\ref{thm:C}.  
We prove one of these reformulations, 
Theorem~\ref{thm:main thm for inner fibs} below.    

The following lemma gives a characterization 
of the inner anodyne maps with codomain an 
$\infty$-category $\calC$.  

\begin{lemma} 
\label{lem:easy version of Joyals conj} 
Suppose that $u\colon A\to \calC$ is a monic 
categorical equivalence which is a bijection 
on objects.  If $\calC$ is an $\infty$-category 
then $u$ is inner anodyne.  
\end{lemma}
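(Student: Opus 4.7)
The plan is to factor $u$ using the small object argument and then exhibit $u$ as a retract of the resulting inner anodyne map. First I would use the small object argument applied to the set of inner horn inclusions to factor $u$ as
\[
A \xrightarrow{\,j\,} B \xrightarrow{\,p\,} \calC
\]
with $j$ inner anodyne and $p$ an inner fibration. Since $\calC$ is an $\infty$-category and $p$ is an inner fibration, $B$ is automatically an $\infty$-category (one lifts any $\Lambda^n_i \to B$ along $p$, fills in $\calC$, and lifts back along $p$). Also, since $u$ and $j$ are both bijective on vertices, so is $p$.

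Next I would show that $p$ is a trivial Kan fibration. By the 2-out-of-3 property for categorical equivalences (Remark~\ref{rem:2 out of 3 cat equiv}) and the fact that $j$, being inner anodyne, is a categorical equivalence, $p$ is a categorical equivalence between $\infty$-categories. By Proposition~\ref{prop:ff + ess surj iff equiv}, $p$ is fully faithful and essentially surjective. Since $p$ is also bijective on vertices, $\h(p)\colon \h(B)\to \h(\calC)$ is a fully faithful bijection on objects, hence an isomorphism of ordinary categories, and in particular an isofibration. Therefore $p$ is a categorical fibration, and Lemma~\ref{lem:ps fibn + ce implies tkf} gives that $p$ is a trivial Kan fibration.

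Finally, since $u\colon A\to \calC$ is a monomorphism and $p$ is a trivial Kan fibration, there exists a lift $r\colon \calC\to B$ in the square with sides $j$, $u$, $p$ and $\id_\calC$, so that $ru=j$ and $pr = \id_\calC$. This exhibits $u$ as a retract of $j$ in the arrow category via the diagram
\[
\begin{tikzcd}
A \arrow[r,equal] \arrow[d,"u"'] & A \arrow[r,equal] \arrow[d,"j"] & A \arrow[d,"u"] \\
\calC \arrow[r,"r"'] & B \arrow[r,"p"] & \calC
\end{tikzcd}
\]
Since the class of inner anodyne maps is closed under retracts and $j$ is inner anodyne, $u$ is inner anodyne.

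The only step that might cause trouble is verifying that $\h(p)$ is an isofibration, but in our setting the bijectivity on vertices collapses this issue entirely, since a fully faithful functor that is a bijection on objects is an honest isomorphism of categories. So the real content of the argument is the interplay between the small object argument (producing $j$), Proposition~\ref{prop:ff + ess surj iff equiv} (to extract fully faithful and essentially surjective from being a categorical equivalence between $\infty$-categories), and Lemma~\ref{lem:ps fibn + ce implies tkf} (to upgrade $p$ to a trivial Kan fibration), with the retract argument then finishing the proof.
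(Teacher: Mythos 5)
Your proof is correct and follows essentially the same route as the paper: factor $u$ through an inner anodyne map followed by an inner fibration, observe that the inner fibration is a categorical equivalence between $\infty$-categories which is a categorical fibration (the paper gets this by noting $\h(p)$ is an isomorphism, you by noting it is fully faithful and bijective on objects — the same point), apply Lemma~\ref{lem:ps fibn + ce implies tkf} together with Proposition~\ref{prop:ff + ess surj iff equiv} to conclude it is a trivial Kan fibration, and finish with the retract argument. Your write-up is if anything slightly more explicit than the paper's about why the section $r$ satisfies $ru=j$, which is exactly what the phrase ``codomain retract'' in the paper requires.
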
 

\begin{proof} 
Factor $u$ as $u = pi$ where $i\colon A\to \calB$ is inner anodyne 
and $p\colon \calB\to \calC$ is an inner fibration.  
Since the functors $\h(u)$ and $\h(i)$ are isomorphisms 
it follows that $\h(p)$ is an isomorphism.  Since 
$\calC$ is an $\infty$-category it follows quickly that 
$p$ is a categorical fibration.    
Since $p$ is a categorical 
equivalence between $\infty$-categories it follows that 
$p$ is a trivial Kan fibration 
(Lemma~\ref{lem:ps fibn + ce implies tkf} 
and Proposition~\ref{prop:ff + ess surj iff equiv}).  
Choosing a section of $p$ exhibits $u$ as a codomain retract 
of the inner anodyne map $i$.  Hence $u$ is inner anodyne.    
\end{proof} 

\begin{example} 
In particular, the canonical inclusion 
$\Delta^n\times \set{0}\cup \partial\Delta^n 
\times J \subseteq \Delta^n\times J$ is inner 
anodyne for every $n\geq 1$.  
\end{example} 

This next proposition lists some reformulations 
of Theorem~\ref{thm:C}.  We shall see below 
that the third statement in this list 
can be proven by a descent argument 
using pre-fibrant simplicial sets. 

\begin{proposition}
\label{prop:equiv statements for ia}
The following statements are equivalent:
\begin{enumerate} 
\item The class of inner anodyne maps satisfies the 
2-out-of-3 property in the class of monomorphisms 
in $\sSet$; 
\item Every monic categorical equivalence which 
is a bijection on objects is inner anodyne; 
\item if $p\colon X\to S$ is an inner fibration 
between simplicial sets which is a bijection on 
objects and a categorical equivalence, then 
$p$ is a trivial Kan fibration.  
\end{enumerate} 
\end{proposition}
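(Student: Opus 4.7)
The plan is to establish the four implications $(2) \Rightarrow (1)$, $(1) \Rightarrow (2)$, $(3) \Rightarrow (2)$, and $(2) \Rightarrow (3)$. The first is the most elementary: composition closure handles one case of the 2-out-of-3 property, the right cancellation property (Lemma~\ref{lem:right cancellation inner}) handles another, and in the remaining left cancellation case --- monomorphisms $u\colon A \to B$ and $v\colon B \to C$ with $v$ and $vu$ inner anodyne --- one verifies that $u$ is a categorical equivalence by Remark~\ref{rem:2 out of 3 cat equiv} and is bijective on vertices (injective since it is a monomorphism; surjective since $vu$ is surjective on vertices and $v$ is injective on vertices), so $(2)$ applies.

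For $(1) \Rightarrow (2)$, given a monic categorical equivalence $u\colon A \to B$ bijective on vertices, I would choose an inner anodyne map $B \to \calC$ into an $\infty$-category $\calC$ via the small object argument. The composite $A \to \calC$ is then a monic categorical equivalence bijective on vertices whose codomain is an $\infty$-category, and hence is inner anodyne by Lemma~\ref{lem:easy version of Joyals conj}. Both legs of the triangle over $\calC$ are inner anodyne monomorphisms, so the left cancellation instance of $(1)$ forces $u$ itself to be inner anodyne.

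The implication $(3) \Rightarrow (2)$ is a standard retract argument: factor $u = qi$ with $i$ inner anodyne and $q$ an inner fibration; then $q$ is a categorical equivalence bijective on vertices by 2-out-of-3 and a vertex count, so $(3)$ produces a trivial Kan fibration structure on $q$. Since $u$ is a monomorphism, lifting $u$ against $q$ with top edge $i$ yields a map $\sigma$ exhibiting $u$ as a codomain retract of the inner anodyne map $i$, whence $u$ is inner anodyne.

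The main step is $(2) \Rightarrow (3)$. Given an inner fibration $p\colon X \to S$ that is a categorical equivalence bijective on vertices, I would choose an inner anodyne map $S \to \calD$ with $\calD$ an $\infty$-category, and then factor the composite $X \to \calD$ as an inner anodyne map $X \to \calC$ followed by an inner fibration $\calC \to \calD$ (so that $\calC$ is also an $\infty$-category). Two applications of 2-out-of-3 show that $\calC \to \calD$ is a categorical equivalence bijective on vertices; then $\h(\calC) \to \h(\calD)$ is a bijective-on-objects equivalence of categories and hence an isomorphism, so $\calC \to \calD$ is a categorical fibration. Lemma~\ref{lem:ps fibn + ce implies tkf} then identifies $\calC \to \calD$ as a trivial Kan fibration. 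Pulling back along $S \to \calD$ produces a trivial Kan fibration $\calC \times_\calD S \to S$ together with an induced map $X \to \calC \times_\calD S$ which is monic, bijective on vertices, and a categorical equivalence (the checks combine the properties of the pullback projection $\calC \times_\calD S \to \calC$ with those of $X \to \calC$). By $(2)$ this map is inner anodyne, and solving its lifting problem against the inner fibration $p$ exhibits $p$ as a retract, in the arrow category, of the trivial Kan fibration $\calC \times_\calD S \to S$; closure of trivial Kan fibrations under retracts concludes the argument. The delicate point throughout is keeping careful track of the bijective-on-vertices condition, since this is what upgrades the categorical equivalence $\calC \to \calD$ to a categorical fibration and thereby unlocks Lemma~\ref{lem:ps fibn + ce implies tkf}.
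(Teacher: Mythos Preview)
Your proposal is correct and follows essentially the same route as the paper's proof: the same cycle of implications, the same use of Lemma~\ref{lem:easy version of Joyals conj} for $(1)\Rightarrow(2)$, the same factor-and-retract argument for $(2)\Rightarrow(3)$, and the retract argument you give for $(3)\Rightarrow(2)$ is exactly the adaptation of Lemma~\ref{lem:easy version of Joyals conj} that the paper alludes to. One small remark: in $(2)\Rightarrow(3)$, the cleanest way to see that $X\to \calC\times_{\calD}S$ is a categorical equivalence is via the triangle over $S$ (using that $\calC\times_{\calD}S\to S$ is a trivial Kan fibration and $p$ is a categorical equivalence), rather than via the projection to $\calC$ as your parenthetical suggests; and for $(2)\Rightarrow(1)$ the paper handles all three cases uniformly (any two inner anodyne forces all three to be monic categorical equivalences bijective on objects, then apply $(2)$) rather than invoking Lemma~\ref{lem:right cancellation inner} separately.
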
 

We clarify the meaning of statement (1) above: if $u$ 
and $v$ are composable monomorphisms in $\sSet$ such that 
any two of the maps $u$, $v$ and $vu$ are inner anodyne, 
then all three of the maps are inner anodyne.  

\begin{proof} 
We prove that (1) $\implies$ (2).  Suppose that 
the class of inner anodyne maps satisfies the 
2-out-of-3 property.  Let $u\colon A\to B$ be 
a monic categorical equivalence which is a bijection 
on objects.  Choose an inner anodyne map 
$B\to B'$ where $B'$ is an $\infty$-category.  
Then the composite map $A\to B\to B'$ is a 
monic categorical equivalence and a bijection 
on objects with codomain an $\infty$-category.  
Hence it is inner anodyne (Lemma~\ref{lem:easy version of Joyals conj}).  
Therefore, under the assumption (1), it follows 
that $u\colon A\to B$ is inner anodyne.  

We prove that (2) $\implies$ (1).  Suppose that 
(2) is true.  Let $u\colon A\to B$ 
and $v\colon B\to C$ be monomorphisms 
of simplicial sets such that two of 
the maps $u$, $v$ and $vu$ are inner 
anodyne.  Under this assumption the 
2-out-of-3 property of categorical equivalences 
shows that all of the maps $u$, $v$ and 
$vu$ are categorical equivalences.  Similarly 
all of the maps $u$, $v$ and $vu$ must be 
bijections on objects.  It follows, under 
the assumption of (2), that all of the maps 
$u$, $v$ and $vu$ are inner anodyne.  

We prove that (2) $\implies$ (3).  Let 
$p\colon X\to S$ be an inner fibration between 
simplicial sets which is a categorical equivalence 
and a bijection on objects.  Choose an 
inner anodyne map $S\to S'$ where $S'$ is an 
$\infty$-category.  We may factor the composite 
map $X\to S\to S'$ as $X\to X'\to S'$ where 
$X\to X'$ is inner anodyne and $p'\colon X'\to S'$ 
is an inner fibration, so that we have a 
commutative diagram 
\[
\begin{tikzcd} 
X\arrow[d,"p"] \arrow[r] & X'\arrow[d,"p'"] \\ 
S\arrow[r] & S' 
\end{tikzcd} 
\]
The induced map $\h(X')\to \h(S')$ is an isomorphism 
of categories, from which it follows easily that 
$p'\colon X'\to S'$ is a categorical fibration.    
Since $p'$ is a categorical 
equivalence between 
$\infty$-categories it follows
(Lemma~\ref{lem:ps fibn + ce implies tkf}) 
that $p'$ is a trivial Kan fibration.  
The induced map $X\to X'\times_{S'}S$ is a monic 
categorical equivalence which is a bijection on objects.  
Hence it is inner anodyne, under the assumption that 
(2) holds.  Therefore, using the fact that $p$ is 
an inner fibration, we can find the indicated 
diagonal filler in the diagram 
\[
\begin{tikzcd} 
X \arrow[r,"\mathrm{id}_X"] \arrow[d] & X \arrow[d,"p"] \\ 
X'\times_{S'}S \arrow[r] \arrow[ur,dashed] & S 
\end{tikzcd} 
\]
It follows that $p\colon X\to S$ is a retract of 
the trivial Kan fibration $X\times_{S'}S\to S$, and 
hence must itself be a trivial Kan fibration.  

Finally, to complete the proof it suffices 
to prove that (3) $\implies$ (2).  Under the assumption 
of (3), the proof of Lemma~\ref{lem:easy version of Joyals conj} 
can be easily adapted to show that (2) is true.   
\end{proof} 

\begin{theorem} 
\label{thm:main thm for inner fibs}
Let $p\colon X\to S$ be an inner fibration between simplicial sets.  
If $p$ is a categorical equivalence and if the map $p_0\colon 
X_0\to S_0$ on objects is a bijection then $p$ is 
a trivial Kan fibration.  
\end{theorem}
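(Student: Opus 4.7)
The plan is to reduce via descent to the case where both source and target are pre-fibrant, and then adapt the argument of Lemma~\ref{lem:ps fibn + ce implies tkf}. First, apply Proposition~\ref{prop:descent pre-fibrant simp sets} to obtain an inner anodyne map $S\to T$ with $T$ pre-fibrant, fitting into a pullback square
\[
\begin{tikzcd}
X \arrow[d,"p"] \arrow[r] & Y \arrow[d,"q"] \\
S \arrow[r] & T
\end{tikzcd}
\]
in which $q$ is an inner fibration and $X\to Y$ is inner anodyne. Since trivial Kan fibrations are stable under base change, it suffices to prove that $q$ is a trivial Kan fibration. The 2-out-of-3 property for categorical equivalences (together with the analogous trivial cancellation for bijections of vertex sets) shows that $q$ is a categorical equivalence bijective on vertices, and $Y$ is pre-fibrant by Remark~\ref{rem:condition *}. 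Hence it suffices to treat the case in which $X$ and $S$ are themselves pre-fibrant.

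Next, I would show that in this reduced setting $q$ is fully faithful in the sense of Remark~\ref{rem:fully faithful for pre-fibrant simp set}, i.e.\ that it induces homotopy equivalences on naive mapping spaces. Using Proposition~\ref{prop:key prop}, choose an inner anodyne map $T\to \calC$ with $\calC$ an $\infty$-category; then factor the composite $Y\to T\to \calC$ as an inner anodyne map $Y\to \calD$ followed by an inner fibration $\calD\to \calC$, so that $\calD$ is an $\infty$-category. Two-out-of-three makes $\calD\to \calC$ a categorical equivalence of $\infty$-categories, hence fully faithful by Proposition~\ref{prop:ff + ess surj iff equiv}. Remark~\ref{rem:mapping space for condition star}, applied to the inner anodyne maps $Y\to \calD$ and $T\to \calC$, then yields homotopy equivalences on naive mapping spaces; combining these three facts via 2-out-of-3 shows that $q$ also induces homotopy equivalences on all naive mapping spaces.

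To conclude, I would mimic the slice argument from Lemma~\ref{lem:ps fibn + ce implies tkf}. The case $n=0$ of the lifting problem $\partial\Delta^n\subseteq \Delta^n$ is handled by bijectivity on vertices; for $n\geq 1$, adjointness reduces the problem to finding diagonal fillers against the slice left fibration $Y_{u(0)/}\to T_{v(0)/}\times_{T} Y$ (Proposition 2.1.2.1 of \cite{HTT}), so it suffices by Lemma 2.1.3.4 of \cite{HTT} to verify that its fibers over vertices are contractible. Given a vertex $(f,y_1)$, pulling back along $\set{y_1}\hookrightarrow Y$ yields the map $\Hom^L_Y(u(0),y_1)\to \Hom^L_T(v(0),q(y_1))$, which is a left fibration whose base is a Kan complex (since $T$ is pre-fibrant), hence a Kan fibration by Lemma 2.1.3.3 of \cite{HTT}, between Kan complexes ($Y$ being pre-fibrant). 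By the fully faithfulness established above it is a homotopy equivalence, hence a trivial Kan fibration; its fiber over $f$ coincides with the desired fiber of the slice. The main obstacle is precisely this fiber analysis, which is why both the reduction to the pre-fibrant case and the identification of naive mapping spaces with Kan complexes were essential ingredients to set up beforehand.
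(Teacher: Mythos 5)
Your proposal is correct and follows essentially the same route as the paper: reduce to the pre-fibrant case via Proposition~\ref{prop:descent pre-fibrant simp sets}, deduce homotopy equivalences on naive mapping spaces from Proposition~\ref{prop:key prop} and Remark~\ref{rem:fully faithful for pre-fibrant simp set}, and then solve the boundary-filling problems through the slice left fibration with contractible fibers as in Lemma~\ref{lem:ps fibn + ce implies tkf}. The only cosmetic differences are that you treat $n=1$ uniformly with $n\geq 2$ and spell out explicitly the fully-faithfulness argument that the paper compresses into citations.
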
 

\begin{proof} 
By Proposition~\ref{prop:descent pre-fibrant simp sets} 
we may find a pullback diagram 
\[
\begin{tikzcd} 
X \arrow[r] \arrow[d,"p"] & X' \arrow[d,"p'"] \\ 
S \arrow[r] & S' 
\end{tikzcd} 
\]
in which the horizontal maps are inner anodyne, 
$p'\colon X'\to S'$ is an inner fibration and 
$S'$ (and hence $X'$) is a pre-fibrant simplicial set.  


Clearly $p'\colon X'\to S'$ is a categorical equivalence 
and also a bijection on objects.  It follows that without 
loss of generality we may suppose at the outset that 
$S$ (and hence $X$ --- see Remark~\ref{rem:condition *}) 
is a pre-fibrant simplicial set.  

Our aim is to prove that $p\colon X\to S$ has the 
right lifting property against the boundary inclusions 
$\partial\Delta^n\subseteq \Delta^n$ for $n\geq 0$.  
The case $n=0$ is clear, since $p$ is a bijection on 
objects.  

Suppose that $n\geq 1$ and suppose given 
a commutative diagram 
\begin{equation}
\label{eq:boundary horn filling diagram}
\begin{tikzcd} 
\partial\Delta^n \arrow[d] \arrow[r,"u"] & X \arrow[d,"p"] \\ 
\Delta^n \arrow[r] & S 
\end{tikzcd} 
\end{equation}
When $n\geq 2$ we may write 
\[
\partial\Delta^n = \Delta^{\set{0}}\star 
\partial\Delta^{\set{1,2,\ldots, n}}\cup 
\Delta^{\set{1,2,\ldots, n}}, 
\]
from which it follows that a diagonal filler 
exists in~\eqref{eq:boundary horn filling diagram} 
if and only if the 
indicated diagonal filler exists in the diagram 
\[
\begin{tikzcd} 
\partial\Delta^{\set{1,2,\ldots,n}} \arrow[d] 
\arrow[r] & X_{x/} \arrow[d]                    \\ 
\Delta^{\set{1,2,\ldots,n}} \arrow[r] 
\arrow[ur,dashed] & 
S_{x/}\times_S X 
\end{tikzcd} 
\]
where $x = u(0)$ and where the right hand 
vertical map is the induced map.  Since $p\colon X\to S$ is 
an inner fibration the induced map $X_{x/}\to 
S_{x/}\times_S X$ is a left fibration.  Therefore 
it suffices to prove that the induced map has 
contractible fibers (Lemma 2.1.3.4 of 
\cite{HTT}).  Let $y$ be a vertex of 
$X$.  It suffices to prove that the left fibration  
\[
X_{x/}\times_X \set{y}\to S_{x/}\times_S \set{y} 
\]
has contractible fibers.  But this latter map 
is a trivial Kan fibration by the hypothesis that 
$p\colon X\to S$ is a categorical equivalence,  
using Proposition~\ref{prop:key prop} and the 
fact that $X$ and $S$ are pre-fibrant simplicial sets 
(see Remark~\ref{rem:fully faithful for pre-fibrant simp set}).. 

Finally, we need to prove that there is a 
diagonal filler 
in~\eqref{eq:boundary horn filling diagram} 
in the 
case when $n=1$.  In this case, write 
$x = u(0)$ again and write $y= u(1)$.  Then 
the canonical map 
\[
X_{x/}\times_X \set{y}\to S_{x/}\times_S \set{y} 
\]
is a trivial Kan fibration and in particular 
is surjective on vertices.  It follows that 
the edge $\Delta^1\to S$ can be lifted to an 
edge $\Delta^1\to X$, as required. 
\end{proof} 

We can now give the proof of Theorem~\ref{thm:C} 
from the introduction.  

\begin{proof}[Proof of Theorem~\ref{thm:C}]
We have observed earlier that every inner anodyne 
map is a bijection on objects and a categorical 
equivalence.  The converse statement follows 
from Theorem~\ref{thm:main thm for inner fibs} 
and Proposition~\ref{prop:equiv statements for ia}. 
\end{proof}

\subsection{The inner model structure} 

Let $\sSet(O)$ denote the subcategory of 
$\sSet$ whose objects consist of the 
simplicial sets whose set of $0$-simplices 
is equal to a fixed set $O$.  The morphisms 
in $\sSet(O)$ are the simplicial maps 
which induce the identity on the set $O$.  

Since limits and colimits are computed pointwise 
in $\sSet$, it follows that the inclusion 
$\sSet(O)\subseteq \sSet$ creates all 
limits and colimits, and hence that 
$\sSet(O)$ is complete and cocomplete.

We will say that a map $X\to Y$ in 
$\sSet(O)$ is an inner fibration, or 
inner anodyne, or a categorical 
equivalence, or a trivial 
Kan fibration, just in case the underlying 
map of simplicial sets is such a map.  

We make the following 
observation.  

\begin{lemma} 
A map $u\colon A\to B$ in 
$\sSet(O)$ is a monomorphism 
if and only it has the left 
lifting property with respect 
to all trivial Kan fibrations 
in $\sSet(O)$.  
\end{lemma}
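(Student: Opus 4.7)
The plan is to verify both implications using standard techniques, with the main tool being a small object argument adapted to remain inside $\sSet(O)$.

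For the forward direction, suppose $u\colon A\to B$ is a monomorphism in $\sSet(O)$ and $p\colon X\to Y$ is a trivial Kan fibration in $\sSet(O)$. Given a lifting square in $\sSet(O)$ with top edge $f\colon A\to X$ and bottom edge $g\colon B\to Y$, I would invoke the standard fact (part of the Kan--Quillen model structure) that monomorphisms in $\sSet$ have the left lifting property against trivial Kan fibrations in $\sSet$ to produce a diagonal filler $h\colon B\to X$. It remains to check that $h$ lies in $\sSet(O)$, i.e.\ that $h_0 = \id_O$; but the commutativity relation $h_0 \circ u_0 = f_0$ combined with $u_0 = \id_O$ and $f_0 = \id_O$ forces $h_0 = \id_O$ immediately.

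For the converse, suppose $u\colon A\to B$ has the left lifting property against all trivial Kan fibrations in $\sSet(O)$. I would factor $u$ by a small object argument based on the restricted set of boundary inclusions $\{\partial\Delta^n\subseteq \Delta^n : n\geq 1\}$, producing $u = p \circ i$ with $i\colon A\to A'$ a transfinite composition of pushouts of coproducts of these generators. Since each generator is bijective on $0$-simplices, no new $0$-simplices arise in the construction; combined with the fact that $u_0 = \id_O$, this ensures the entire process takes place inside $\sSet(O)$, so that $i$ is a monomorphism in $\sSet(O)$. The map $p\colon A'\to B$ has the right lifting property against $\partial\Delta^n\subseteq \Delta^n$ for $n\geq 1$ by construction; and since $p_0\colon O\to O$ is the identity, it is also surjective on vertices, hence has the right lifting property against $\emptyset\subseteq\Delta^0$ as well. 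Therefore $p$ is a trivial Kan fibration in $\sSet(O)$.

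Applying the hypothesis to the commutative square with top edge $i$ and bottom edge $\id_B$ yields a map $s\colon B\to A'$ in $\sSet(O)$ satisfying $s \circ u = i$ and $p\circ s = \id_B$. This exhibits $u$ as a retract of $i$ in the arrow category of $\sSet(O)$ via the diagram with top row $\id_A, \id_A$ on $A \to A \to A$, vertical maps $u, i, u$, and bottom row $s, p$ on $B \to A' \to B$. Since monomorphisms are closed under retracts, $u$ is itself a monomorphism. The main point to take care with is verifying that the small object argument remains within $\sSet(O)$ and that the resulting $p$ qualifies as a trivial Kan fibration despite the omission of the generator $\emptyset\subseteq\Delta^0$; both rely on the observation that $u_0 = \id_O$ from the outset.
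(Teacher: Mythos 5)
Your proof is correct, and while your treatment of the easy direction (a monomorphism lifts against trivial Kan fibrations in $\sSet(O)$, since the filler produced in $\sSet$ is automatically the identity on $O$) coincides with the paper's, your converse is a genuinely different argument. The paper never factors $u$: given an arbitrary trivial Kan fibration $p\colon X\to S$ in $\sSet$ and a lifting problem against $u$, it passes to the full simplicial subsets $X_O\subseteq X$ and $S_O\subseteq S$ spanned by the images of $O$ (defined as pullbacks along $\cosk_0 O\to \cosk_0 X$ and $\cosk_0 O\to\cosk_0 S$), observes that the induced map $p_O\colon X_O\to S_O$ is a trivial Kan fibration in $\sSet(O)$ through which the original square factors, and solves the restricted problem using the hypothesis; thus $u$ lifts against \emph{every} trivial Kan fibration of $\sSet$ and is therefore a monomorphism by the usual characterization in $\sSet$. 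You instead run the small object argument on the boundary inclusions $\partial\Delta^n\subseteq\Delta^n$ with $n\geq 1$, check that the factorization $u=p\circ i$ stays inside $\sSet(O)$ (the generators are bijective on vertices, and surjectivity of $p$ on vertices supplies the missing $n=0$ lifting condition, so $p$ is a trivial Kan fibration in $\sSet(O)$), and conclude by the retract argument. Both arguments are complete; yours is the classical retract argument and makes visible the generating cofibrations of $\sSet(O)$ that the paper records immediately after the lemma, whereas the paper's route gives the slightly stronger intermediate conclusion that a map of $\sSet(O)$ with the stated lifting property lifts against all trivial Kan fibrations of $\sSet$, not just those lying in $\sSet(O)$.
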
 

\begin{proof} 
Observe that $u\colon A\to B$ in 
$\sSet(O)$ is a monomorphism 
if and only if its image in 
$\sSet$ is a monomorphism, if 
and only if its image in 
$\sSet$ has the left lifting 
property with respect to all 
trivial fibrations in $\sSet$.  
Therefore, if $u\colon A\to B$ is a 
monomorphism in $\sSet(O)$ then 
it has the left lifting property 
with respect to all trivial Kan fibrations 
in $\sSet(O)$.  We need to prove 
that the converse statement is 
true.  

Suppose given a commutative 
diagram 
\begin{equation} 
\label{eq:comm diag2}
\begin{tikzcd} 
A \arrow[d,"u"] \arrow[r] 
& X \arrow[d,"p"] \\ 
B \arrow[r] 
\arrow[ur,dashed] & S 
\end{tikzcd} 
\end{equation}
where $p\colon X\to S$ is a 
trivial Kan fibration in 
$\sSet$ and where $u\colon A\to B$ 
is a monomorphism 
in $\sSet(O)$.  
Define an object $S_O$ of 
$\sSet(O)$ by the pullback diagram 
\[
\begin{tikzcd} 
S_O \arrow[r] \arrow[d] 
& S \arrow[d]          \\ 
\cosk_0 O \arrow[r] 
& \cosk_0 S 
\end{tikzcd} 
\]
Define similarly an object 
$X_O$ of $\sSet(O)$ and observe 
that $p$ induces a map 
$p_O\colon X_O\to S_O$ 
which is a trivial Kan fibration.  We 
have a commutative diagram 
\begin{equation}
\label{eq:comp diag1}
\begin{tikzcd} 
A \arrow[d,"u"] 
\arrow[r] & X_O 
\arrow[d,"p_O"] 
\arrow[r] & X 
\arrow[d,"p"]         \\ 
B \arrow[ur,dashed] 
\arrow[r] & 
S_O \arrow[r] & S 
\end{tikzcd} 
\end{equation}
A choice of the indicated diagonal 
filler in~\eqref{eq:comp diag1}
induces a 
diagonal filler 
in~\eqref{eq:comm diag2}.  
The result follows. 
\end{proof} 

In particular it follows 
from the lemma that 
a map 
in $\sSet(O)$ is a trivial 
Kan fibration if and only 
if it has the right lifting 
property against the set of 
maps in $\sSet(O)$ of the 
form 
\[
O\cup_{\set{0,\ldots,n}}\partial\Delta^n 
\to O\cup_{\set{0,\ldots, n}} 
\Delta^n 
\]
for some map $\set{0,\ldots, n}
\to O$, where $n\geq 1$.  

A similar argument 
shows that the class of inner anodyne maps  
in $\sSet(O)$ is precisely the class of 
maps which belong to the weakly 
saturated class in $\sSet(O)$ 
generated by  
the maps in $\sSet(O)$ of the form 
\[
O\cup_{\set{0,\ldots, n}}\Lambda^n_i 
\to O\cup_{\set{0,\ldots,n}}\Delta^n 
\]
for some map $\set{0,\ldots, n}\to O$, 
where $0<i<n$.  

We can now establish the inner 
model structure on $\sSet(O)$, proving 
Theorem~\ref{thm:D} from Section~\ref{sec:intro}.  
We have the following theorem. 

\begin{theorem} 
\label{thm:inner model structure}
There is the structure of a model category 
on $\sSet(O)$ in which 
\begin{itemize} 
\item the cofibrations are the monomorphisms 
in $\sSet(O)$; 
\item the weak equivalences are the 
categorical equivalences in $\sSet(O)$; and 
\item the fibrations are the inner fibrations 
in $\sSet(O)$.  
\end{itemize} 
The model structure is cofibrantly generated 
and left proper.  
\end{theorem}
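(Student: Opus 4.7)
The plan is to apply Quillen's recognition theorem to the sets
\[
I = \bigl\{O \cup_{\{0,\ldots,n\}} \partial\Delta^n \hookrightarrow O \cup_{\{0,\ldots,n\}} \Delta^n : n \geq 1 \bigr\}
\]
indexed over all maps $\{0,\ldots,n\}\to O$, and
\[
J = \bigl\{O \cup_{\{0,\ldots,n\}} \Lambda^n_i \hookrightarrow O \cup_{\{0,\ldots,n\}} \Delta^n : 0 < i < n \bigr\}
\]
indexed similarly. As noted just before the statement, the weakly saturated class generated by $I$ in $\sSet(O)$ is the class of monomorphisms, with $I$-injectives the trivial Kan fibrations; the weakly saturated class generated by $J$ is the class of inner anodyne maps in $\sSet(O)$, with $J$-injectives the inner fibrations.

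The substantive input consists of two identifications. First, since every morphism in $\sSet(O)$ is the identity on vertices, Theorem~\ref{thm:C} gives that a monomorphism in $\sSet(O)$ is a categorical equivalence if and only if it is inner anodyne; equivalently, the trivial cofibrations of the proposed model structure coincide with the $J$-cells. Second, Theorem~\ref{thm:main thm for inner fibs} gives that an inner fibration in $\sSet(O)$ which is a categorical equivalence is a trivial Kan fibration (the converse being a consequence of an earlier lemma in Section~\ref{sec:categorical equivalences}); equivalently, the trivial fibrations coincide with the $I$-injectives.

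Granted these identifications, the remaining axioms assemble readily. The factorization axioms follow from the small object argument applied to $I$ and $J$: the domains of the maps in $I$ and $J$ each contain $O$ together with only finitely many non-degenerate positive-dimensional simplices, hence are small in $\sSet(O)$. The lifting axioms follow immediately from the two identifications combined with the characterizations of $I$- and $J$-injectives. Two-out-of-three for categorical equivalences is Remark~\ref{rem:2 out of 3 cat equiv}, and closure of all three distinguished classes under retracts is routine. Left properness is an instance of Lemma~\ref{lem:cat equivs stable under po}, since a pushout in $\sSet(O)$ along a monomorphism agrees with the corresponding pushout in $\sSet$.

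The main obstacle is the second identification above: that an inner fibration which is a categorical equivalence must be a trivial Kan fibration. This is precisely the content of Theorem~\ref{thm:main thm for inner fibs}, which in turn rests on the descent machinery culminating in Proposition~\ref{prop:descent pre-fibrant simp sets} together with the mapping-space analysis for pre-fibrant simplicial sets. Once this input is available, the remainder of the proof is bookkeeping and the model structure is automatically cofibrantly generated by $I$ and $J$.
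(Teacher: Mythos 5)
Your proposal follows essentially the same route as the paper: the two weak factorization systems produced by the small object argument applied to the boundary-type and inner-horn-type generating sets in $\sSet(O)$, with the decisive input being Theorem~\ref{thm:main thm for inner fibs} (every inner fibration in $\sSet(O)$ that is a categorical equivalence is a trivial Kan fibration), so that fibrations-which-are-weak-equivalences coincide with the $I$-injectives. The paper packages this via Joyal's recognition criterion (Proposition E.1.11 of \cite{J2}) rather than Quillen's recognition theorem, and it does not need your first identification explicitly (the inclusion ``trivial cofibration $\Rightarrow$ inner anodyne'' falls out of the retract argument inside that criterion, though quoting Theorem~\ref{thm:C} as you do is equally legitimate and not circular at this point in the paper). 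These are only differences of bookkeeping.

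The one genuine misstep is your justification of left properness. Lemma~\ref{lem:cat equivs stable under po} says that a \emph{monic} categorical equivalence is stable under cobase change along an arbitrary map, i.e.\ that trivial cofibrations are closed under pushout --- a statement that holds automatically in any model category and is not what left properness asserts. Left properness requires that an \emph{arbitrary} categorical equivalence $A\to B$ (not necessarily a monomorphism) pushed out along a monomorphism $A\to C$ remains a categorical equivalence, and the lemma you cite does not cover this configuration. The repair is standard: every object $X$ of $\sSet(O)$ is cofibrant, since the inclusion of the vertex set $O\to X$ is a monomorphism, and a model category in which all objects are cofibrant is left proper (by the usual gluing-lemma argument); alternatively, factor the categorical equivalence as an inner anodyne map followed by a trivial Kan fibration (using Theorem~\ref{thm:main thm for inner fibs}) and treat the two pieces separately, the second requiring a gluing argument rather than Lemma~\ref{lem:cat equivs stable under po}. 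Note that the paper's own proof is silent on left properness, so supplying the cofibrancy argument would actually improve on it.
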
 


\begin{proof} 
Let $\calW$ denote the class of categorical 
equivalences in $\sSet(O)$.   Clearly 
$\calW$ satisfies the 2-out-of-3 property.  
Let $\calC$ denote the class of monomorphisms 
in $\sSet(O)$ and let $\calF_{\calW}$ 
denote the class of trivial Kan fibrations 
in $\sSet(O)$.  Note that 
$\calF_{\calW}\subseteq 
\calF\cap \calW$.
By the small object argument, 
the pair $(\calC,\calF_{\calW})$ forms a 
weak factorization system on $\sSet(O)$.  

Let $\calF$ denote the class of inner fibrations 
in $\sSet(O)$, and let $\calC_{\calW}$ denote 
the class of inner anodyne maps in 
$\sSet(O)$.  Note that $\calC_{\calW} 
\subseteq \calC\cap \calW$.  
Again, by the small object argument 
the pair $(\calC_{\calW},\calF)$ forms a 
weak factorization system on $\sSet(O)$.  
It suffices (see Proposition E.1.11 of 
\cite{J2}) to prove the 
reverse inclusion $\calF\cap \calW\subseteq 
\calF_{\calW}$.  This follows immediately from 
Theorem~\ref{thm:main thm for inner fibs}.  
\end{proof} 

\section{The Joyal model structure} 
\label{sec:joyal model structure} 

In this section we prove Theorem~\ref{thm:A} 
and Theorem~\ref{thm:B} and prove 
the existence of the Joyal 
model structure (see Theorem~\ref{thm:joyal nodel str} 
below).

\subsection{Generating acyclic cofibrations} 
In this section we will define a set of 
generating acyclic cofibrations 
for the Joyal model structure on $\sSet$.  
This set will turn out to be analogous 
to the set of generating ayclic cofibrations 
for the Bergner model structure on $\SCat$.  

\begin{notation} 
Denote by $\calJ$ a chosen set 
of representatives for the isomorphism classes of 
simplicial sets $B$ with two vertices $0$ 
and $1$, with countably many non-degenerate 
simplices and 
such that the inclusion $\set{0}\hookrightarrow 
B$ is a categorical equivalences.  
\end{notation} 

In the next section we will prove that a set of generating acyclic cofibrations 
for the Joyal model structure is given by the set 
$\calA$, which is equal to the union of the set $\calJ$ 
and the set of inner anodyne inclusions $\Lambda^n_i\subseteq 
\Delta^n$, $0<i<n$. 



In this section, we characterize the maps in $\sSet$ 
which have the right lifting property with respect 
to the maps in $\calA$.  More precisely, we will 
prove the following proposition.  

\begin{proposition} 
\label{prop:lifting wrt A1 and A2}
A map $p\colon X\to S$ in $\sSet$ has the right lifting property 
with respect to the maps in the set $\calA$ if and only 
if it is an inner fibration such that the induced 
functor $\h(p)\colon \h(X)\to \h(S)$ is an 
isofibration.  
\end{proposition}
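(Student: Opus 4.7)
The plan is to prove each implication separately.  In both directions, the right lifting property against the inner horn inclusions is equivalent to $p$ being an inner fibration, so the substantive work concerns the right lifting property against each $B \in \calJ$.

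For the direction $(\Rightarrow)$, the only task is to show $\h(p)$ is an isofibration.  Given a vertex $x \in X$ and an isomorphism $\bar\eta\colon [p(x)] \to [s']$ in $\h(S)$, I would represent $\bar\eta$ by an equivalence $e\colon \Delta^1 \to S$ and construct some $B \in \calJ$ together with a map $\beta\colon B \to S$ sending the edge $0 \to 1$ to $e$.  Applying the right lifting property of $p$ against $\{0\} \hookrightarrow B$ with $0 \mapsto x$ then yields a map $B \to X$ whose edge $0 \to 1$ is an equivalence in $X$ projecting to $e$, giving the required lift of $\bar\eta$ to $\h(X)$.  I would build $B$ as a colimit $\varinjlim B_n$ of a countable tower, starting from $B_0 = \Delta^1$ and $\beta_0 = e$, at each stage adding cells with chosen preimages in $S$ so as to resolve higher coherences of $e$ being an equivalence.

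For the direction $(\Leftarrow)$, I would first use Proposition~\ref{prop:descent pre-fibrant simp sets} to choose an inner anodyne map $S \to T$ with $T$ pre-fibrant and pull $p$ back to an inner fibration $q\colon Y \to T$ with $X = S \times_T Y$ and $X \to Y$ inner anodyne.  Since any inner anodyne map is a categorical equivalence bijective on vertices and so induces an isomorphism of homotopy categories, $\h(q)$ remains an isofibration; any solution of the lifting problem for $q$ against $B \in \calJ$ canonically descends to one for $p$ via the pullback, so it suffices to work in the pre-fibrant setting.  There I would build the lift $B \to Y$ by induction on the skeleton of $B$: on the $0$-skeleton using that $\{0\} \hookrightarrow B$ being a categorical equivalence forces $\h(B)$ to be essentially a point, so $\beta$ sends the edge $0 \to 1$ to an isomorphism in $\h(T)$ that lifts via the $\h(q)$-isofibration property; on the $1$-skeleton using Lemma~\ref{lem:char of isofibns} since each edge of $B$ maps to an equivalence in $T$ by the same $\h$-argument; and on higher skeleta using the inner fibration property of $q$ combined with equivalence-lifting at each stage.

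The hardest step is the inductive extension over higher skeleta in the $(\Leftarrow)$ direction: the boundary inclusions $\partial\Delta^n \hookrightarrow \Delta^n$ are not inner horn inclusions, so the inner fibration property does not directly give fillers, and the various edge lifts chosen at the $1$-skeleton stage must be coordinated with the prescribed vertex lifts.  The plan is to decompose each such boundary filling problem into a sequence of inner horn lifts and equivalence lifts, leveraging that every edge of $B$ targets an equivalence in $T$, and to carefully manage the resulting compatibilities.  The construction of $B$ in the $(\Rightarrow)$ direction is similarly delicate, with the verification that $\{0\} \hookrightarrow \varinjlim B_n$ is indeed a categorical equivalence being the main technical check.
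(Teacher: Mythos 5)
Your plan for the direction ``right lifting property $\Rightarrow$ isofibration'' hinges on producing, for a given equivalence $e$ in $S$, a member $\set{0}\hookrightarrow B$ of $\calJ$ together with a map $\beta\colon B\to S$ carrying an edge of $B$ to $e$; you propose to build $B$ as a countable tower starting from $\Delta^1$, ``adding cells with chosen preimages in $S$ so as to resolve higher coherences''.  This is precisely the hard point, and as stated it fails: the higher coherences witnessing invertibility of $e$ need not be present in $S$ at all (an edge can become invertible in $\h(S)$ for purely $2$-dimensional reasons while $S$ contains no simplices that could serve as images of the cells you want to attach), so there is nothing from which to choose the preimages.  The paper deals with this by first reducing, via Proposition~\ref{prop:descent pre-fibrant simp sets}, to the case where $S$ is pre-fibrant --- a reduction you invoke only in the other direction --- then passing to the full simplicial subset spanned by the two endpoints of $e$ (with a separate device when the endpoints coincide, since members of $\calJ$ have two distinct vertices), and then appealing to Lemma~\ref{lem:bergners lemma}.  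That lemma is not a routine cell-attachment: its proof in Section~\ref{sec:proof of bergners lemma} runs through a functorial fibrant replacement, a pullback along a Kan fibration (Lemma~\ref{lem:pullback of inner anodyne along Kan fibn}), and the countable-subcomplex argument of Lemma~\ref{lem:GJ lemma}.  None of this machinery, nor a substitute for it, appears in your sketch.

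In the converse direction your skeletal induction breaks at exactly the step you flag as hardest: $\partial\Delta^n\subseteq\Delta^n$ is not inner anodyne, so the inner fibration property of $q$ provides no fillers there, and Lemma~\ref{lem:char of isofibns} only lifts edges; the hoped-for ``decomposition into inner horn lifts and equivalence lifts'' is essentially the assertion that $q$ lifts against the acyclic cofibration $\set{0}\to B$, which is the statement being proved, so as written the outline is circular (or simply missing its main step).  The paper avoids any skeletal induction: after the same reduction to the pre-fibrant case (including replacing $B$ by a pre-fibrant model) and the edge lift via Lemma~\ref{lem:char of isofibns}, it forms the pullback $Y=B\times_S X$, passes to the full simplicial subset $Z\subseteq Y$ spanned by the two lifted vertices, regarded as an object of $\sSet(\set{0,1})$, applies Lemma~\ref{lem:bergners lemma} a second time to factor the lifted equivalence through some $B'\in\calJ$, deduces by 2-out-of-3 that $B'\to B$ is a categorical equivalence, and then obtains the required map $B\to Z$ from Bergner's Lemma~2.6 applied in the inner model structure on $\sSet(\set{0,1})$ of Theorem~\ref{thm:inner model structure}.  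Without Lemma~\ref{lem:bergners lemma} and this model-structure input (or some genuine replacement for them), your proposal does not close in either direction.
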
 

Proposition~\ref{prop:lifting wrt A1 and A2} 
will follow immediately from Propositions~\ref{prop:bergners 2.3} 
and~\ref{prop:bergners 2.5} below, which 
are the direct analogs for $\sSet$ of Proposition 2.3 
and Proposition 2.5 from \cite{Bergner}.  
We shall see that, with some work, the 
arguments used in \cite{Bergner} to 
establish these propositions can be adapted 
to the setting of simplicial sets.   

We shall need the following analog 
of Lemma 2.4 from \cite{Bergner}.  
It is closely 
related to the {\em bounded cofibration} 
property introduced in \cite{Jardine}.

\begin{lemma} 
\label{lem:bergners lemma}
Let $S$ be a pre-fibrant simplicial set with two vertices 
$x$ and $y$ and suppose that $f\colon x\to y$ 
is an equivalence in $S$.  Then the map 
$\Delta^1\to S$ classifying the edge $f$ factors as 
$\Delta^1\to B\to S$ in such a way 
that the composite $\set{0}\to B$ belongs 
to the set $\calJ$. 
\end{lemma}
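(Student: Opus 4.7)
The plan is a Löwenheim--Skolem construction analogous to Lemma~2.4 of \cite{Bergner}. We build $B$ as a countable ascending union $B=\bigcup_{n\geq 0}B_n$ of countable subcomplexes of $S$, arranging in the limit that $B$ is pre-fibrant and that its naive mapping spaces $\Hom^L_B(a,b)$ for $a,b\in\{x,y\}$ are contractible Kan complexes.

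For the base case, use Remark~\ref{rem:subcomplex S'} to choose an inverse edge $g\colon y\to x$, auxiliary edges $\phi,\psi$, and the four $2$-simplices $\sigma,\sigma',\tau,\tau'$ witnessing that $f$ is an equivalence in $\h(S)$. Let $B_0\subseteq S$ be the finite subcomplex they generate, so that $\h(B_0)\cong J$ under the identification $0\mapsto x$, $1\mapsto y$. In the inductive step, given countable $B_n\subseteq S$, form $B_{n+1}$ by adjoining countably many simplices of $S$ chosen as follows: (a)~for every map $\Lambda^2_1\to B_n$ and every $\Lambda^m_j\to B_n$ with $0<j<m$ and $d_0$ constant, a chosen filler drawn from the pre-fibrancy of $S$; (b)~for every pair $a,b\in\{x,y\}$ and every boundary $\partial\Delta^k\to\Hom^L_{B_n}(a,b)$, a chosen extension to $\Delta^k$ in $\Hom^L_S(a,b)$. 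Since $B_n$ has countably many simplices, each iteration requires only countably many additions, so $B_{n+1}$ remains countable.

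Setting $B=\bigcup_n B_n$, the simplicial set $B$ is countable and pre-fibrant by construction. By choosing fillers in step~(a) judiciously---preferring degenerate fillers and ones whose new edges are already present in $B_n$---one maintains the invariant that $\h(B_n)\cong J$ at every finite stage, so $\h(B)\cong J$ is a groupoid. Now Proposition~\ref{prop:key prop} furnishes an inner anodyne map $B\to\calB$ with $\calB$ an $\infty$-category and $\Hom^L_{\calB}(a,b)\cong\Hom^L_B(a,b)$ for all $a,b\in\{x,y\}$. Step~(b) ensures these are contractible in the limit; since $\h(\calB)\cong\h(B)\cong J$, every edge of $\calB$ is an equivalence and so $\calB$ is a Kan complex with one connected component and contractible mapping spaces---i.e., a contractible Kan complex. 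Hence $\{0\}\hookrightarrow\calB$ is a categorical equivalence, by Example~\ref{ex:0 <J cat equiv} together with Proposition~\ref{prop:ff + ess surj iff equiv}. Since $B\to\calB$ is inner anodyne and therefore a categorical equivalence, the $2$-out-of-$3$ property (Remark~\ref{rem:2 out of 3 cat equiv}) gives that $\{0\}\hookrightarrow B$ is a categorical equivalence. Thus $B\in\calJ$ and the factorization $\Delta^1\to B\to S$ classifies $f$ as required.

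The main obstacle is step~(b): one must verify that each boundary $\partial\Delta^k\to\Hom^L_{B_n}(a,b)$ admits an extension inside $\Hom^L_S(a,b)$, so that the limit mapping spaces become contractible. The crux is to control the image of $\Hom^L_{B_n}(a,b)\to\Hom^L_S(a,b)$ so that it lies in a null-homotopic subcomplex at every finite stage. This is arranged by exploiting the equivalence $x\simeq y$: via the chosen inverse $g$ and the $2$-simplices $\sigma,\sigma',\tau,\tau'\in B_n$, one obtains compatible homotopy equivalences between the four Kan complexes $\Hom^L_S(a,b)$, and by coordinating filler choices across the four pairs $(a,b)\in\{x,y\}^2$ and favouring degenerate fillers whenever possible, the needed contractions remain available in $S$ throughout the construction.
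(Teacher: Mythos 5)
There is a genuine gap, and it sits exactly where you flag it: step (b). You insist on building $B$ as a subcomplex of $S$, so that $\Hom^L_{B_n}(a,b)\subseteq \Hom^L_S(a,b)$, and you then require every sphere $\partial\Delta^k\to \Hom^L_{B_n}(a,b)$ to bound a disc inside $\Hom^L_S(a,b)$.  But the naive mapping spaces of a pre-fibrant simplicial set $S$ are merely Kan complexes; nothing in the hypotheses makes $\Hom^L_S(x,y)$ contractible, or even connected (take $S$ a Kan complex with two vertices and nontrivial $\pi_1$: two lifts of $f$ lying in different components of $\Hom^L_S(x,y)$ give a sphere $\partial\Delta^1$ with no filler anywhere in $S$).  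So the extensions you quantify over simply need not exist, and the proposed remedy --- ``control the image so that it lies in a null-homotopic subcomplex at every finite stage'' by ``coordinating filler choices'' and ``favouring degenerate fillers'' --- is not an argument; it is precisely the hard part, and it interacts badly with step (a), since the fillers you are forced to add for pre-fibrancy may themselves create simplices of the mapping spaces whose classes cannot be killed inside $S$.  Note also that the lemma never asks for $B\to S$ to be a monomorphism; your self-imposed constraint $B\subseteq S$ is what makes the plan unworkable.

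The paper's proof avoids this entirely by not working inside $S$.  It uses Remark~\ref{rem:subcomplex S'} to find a subcomplex $S'\subseteq S$ containing $f$ with $\h(S')=J$, takes an inner anodyne map $S'\to K$ with $K$ an $\infty$-category (a Kan complex, since $\h(K)=J$), factors $J\to K$ as an anodyne map $J\to Z$ followed by a Kan fibration $Z\to K$, and forms the pullback $G=S'\times_K Z$.  Then $G\to Z$ is inner anodyne by Lemma~\ref{lem:pullback of inner anodyne along Kan fibn}, so $\set{0}\to G$ is a categorical equivalence by 2-out-of-3 --- no contractibility of any mapping space of $S$ is ever needed, because the ``contractions'' live in the new total space $G$, which maps to $S$ but does not embed in it.  The countability is then extracted by Lemma~\ref{lem:GJ lemma}, whose Löwenheim--Skolem argument runs over lifting problems against the functorial trivial Kan fibration $X(-)$, not over sphere-filling in $\Hom^L_S(a,b)$.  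If you want to salvage your approach, you would have to replace step (b) by something like this auxiliary fibration, at which point you have reproduced the paper's argument.
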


The proof of Lemma~\ref{lem:bergners lemma} 
is somewhat complicated and its proof 
is deferred until Section~\ref{sec:proof of bergners lemma}.  

To begin with, we prove the following version of Proposition 2.3 
from \cite{Bergner}, adapting the proof of that  
proposition from op.\ cit.\ to the present context. 

\begin{proposition} 
\label{prop:bergners 2.3}
Suppose that $p\colon X\to S$ is an inner fibration 
of simplicial sets which has the right 
lifting property against the set of maps 
$\set{0}\to B$ in $\calJ$.  Then $\h(f)\colon 
\h(X)\to \h(S)$ is an isofibration.  
\end{proposition}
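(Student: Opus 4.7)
The plan is to reduce to the case where both source and target are pre-fibrant via Proposition~\ref{prop:descent pre-fibrant simp sets}, and then to combine Lemma~\ref{lem:char of isofibns} with Lemma~\ref{lem:bergners lemma} and the hypothesized right lifting property.

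First, I would apply Proposition~\ref{prop:descent pre-fibrant simp sets} to choose an inner anodyne map $v\colon S\to T$ with $T$ pre-fibrant, fitting into a pullback square
\[
\begin{tikzcd}
X \arrow[d,"p"] \arrow[r,"u"] & Y \arrow[d,"q"] \\
S \arrow[r,"v"] & T
\end{tikzcd}
\]
in which $u$ is inner anodyne and $q$ is an inner fibration.  Since $u$ and $v$ are inner anodyne, they are categorical equivalences bijective on vertices, so the induced functors $\h(u)$ and $\h(v)$ are isomorphisms of categories; hence $\h(p)$ is an isofibration if and only if $\h(q)$ is.  Moreover $Y$ is pre-fibrant by Remark~\ref{rem:condition *}.

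The next step, which I expect to be the main obstacle, is to verify that the right lifting property of $p$ against $\calJ$ is inherited by $q$.  Given a lifting problem
\[
\begin{tikzcd}
\set{0} \arrow[d] \arrow[r,"y"] & Y \arrow[d,"q"] \\
B \arrow[r,"g"] & T
\end{tikzcd}
\]
with $B\in\calJ$, the vertex $y$ corresponds under the bijection on vertices induced by $u$ to a unique $x\in X$, but $g$ need not factor through $v\colon S\to T$, so the lifting property of $p$ cannot be invoked directly.  The argument must exploit the specific form of the descent construction in Proposition~\ref{prop:descent pre-fibrant simp sets}---only inner horns with constant $d_0$ face are attached in the iterative small object construction of $T$ from $S$---in order to build the lift $\tilde g\colon B\to Y$ cell by cell, pushing each obstruction back to $S$ via the structure of the attached simplices.

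Granted the inherited lifting property, the conclusion follows smoothly.  By Lemma~\ref{lem:char of isofibns}, proving that $\h(q)$ is an isofibration reduces to showing that for every equivalence edge $f\colon\Delta^1\to T$ and vertex $y\in Y$ with $q(y)=f(0)$, there exists an equivalence $\tilde f\colon\Delta^1\to Y$ with $q(\tilde f)=f$.  Since the full simplicial subset $T'\subseteq T$ on the two vertices $\set{f(0),f(1)}$ is pre-fibrant (Remark~\ref{rem:condition *}) and $f$ factors through $T'$, Lemma~\ref{lem:bergners lemma} supplies a factorization $\Delta^1\to B\to T'\subseteq T$ with $\set{0}\hookrightarrow B$ in $\calJ$.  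The right lifting property of $q$ against $\set{0}\hookrightarrow B$ then produces a lift $\tilde g\colon B\to Y$ starting at $y$, and restricting to $\Delta^1$ gives the desired $\tilde f$; this $\tilde f$ is an equivalence because the edge $0\to 1$ of $B$ is an equivalence in $B$ and maps of simplicial sets send equivalences to equivalences.
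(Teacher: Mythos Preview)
Your strategy matches the paper's: reduce to the pre-fibrant situation via Proposition~\ref{prop:descent pre-fibrant simp sets}, pass to a full subcomplex on two vertices, apply Lemma~\ref{lem:bergners lemma} to produce some $B\in\calJ$ mapping in, and then solve the resulting lifting problem.  On the point you flag as the main obstacle---that $q$ should inherit the right lifting property against $\calJ$ from $p$---the paper does not carry out any such verification.  It simply asserts ``it follows that without loss of generality we may suppose that $S$ and $X$ are pre-fibrant simplicial sets'' and thereafter invokes the lifting hypothesis for the (now pre-fibrant) map directly; the cell-by-cell descent argument you sketch does not appear, so you should not regard it as the missing ingredient.

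There are, however, two places where your outline is genuinely incomplete relative to the paper's argument.  First, Lemma~\ref{lem:bergners lemma} requires a pre-fibrant simplicial set with two \emph{distinct} vertices, so your use of the full subcomplex $T'$ on $\{f(0),f(1)\}$ breaks down when $f$ is a loop.  The paper handles this case separately: it builds an auxiliary two-object category $C$ all of whose hom-sets equal $\h(S)(s,s)$, pulls $S$ back along the evident functor $C\to\h(S)$ to obtain a pre-fibrant simplicial set $S'$ with $\h(S')=C$ (using Lemma~\ref{lem:pullback lemma}), and thereby replaces the loop by an equivalence between distinct vertices, reducing to the first case.  Second, to invoke Lemma~\ref{lem:bergners lemma} in the case $f(0)\neq f(1)$ you need $f$ to be an equivalence \emph{in $T'$}, not merely in $T$; this is not automatic from ``$f$ factors through $T'$''.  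The paper supplies this by applying Lemma~\ref{lem:pullback lemma} to the pullback square defining the full subcomplex, which shows that $\h(T')\hookrightarrow\h(T)$ is a full subcategory, so an edge invertible in $\h(T)$ is already invertible in $\h(T')$.
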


\begin{proof} 
By Proposition~\ref{prop:descent pre-fibrant simp sets} 
there exists a pullback 
diagram 
\[
\begin{tikzcd} 
X \arrow[d,"p"] \arrow[r,"i"] & Y \arrow[d,"q"] \\ 
S \arrow[r,"j"] & T 
\end{tikzcd} 
\]
in which the horizontal arrows are inner anodyne maps, 
$q$ is an inner fibration, and where $T$ 
is a pre-fibrant simplicial set.  
It follows that without loss of generality we may 
suppose that $S$ and $X$ are pre-fibrant simplicial sets.      

Suppose that $x\in X$ and that $f\colon s\to t$ is an 
equivalence in $S$, where $s = p(x)$.  We will show 
that there is an equivalence $u\colon x\to y$ in $X$ 
such that $p(u) = f$.  
Suppose to begin with that $s\neq t$.  Let $\Delta^1
\to S$ classify the edge $f$.  Write $S'$ for the full 
simplicial subset of $S$ spanned by $s$ and $t$.  
We claim that $f$ restricts to an 
equivalence in $S'$.  To see this, it 
suffices to prove that $\h(S')\subseteq \h(S)$ is a 
full subcategory.  To see this, observe that   
by Lemma~\ref{lem:pullback lemma}, the pullback 
diagram 
\[
\begin{tikzcd} 
S' \arrow[r] 
\arrow[d] & 
\cosk_0\set{s,t} \arrow[d] \\ 
 S
\arrow[r] & 
\cosk_0 S 
\end{tikzcd} 
\]
of pre-fibrant simplicial sets shows that the 
induced diagram 
\[
\begin{tikzcd} 
\h(S') \arrow[r] 
\arrow[d] & 
\h(\cosk_0\set{s,t}) \arrow[d] \\ 
\h(S)  \arrow[r] 
& \h(\cosk_0 S)
\end{tikzcd} 
\]
is also a pullback, which immediately 
implies the claim.    Using 
Lemma~\ref{lem:bergners lemma} we may 
factor the induced map $\Delta^1\to S'$ 
as $\Delta^1\to B\to S'$ in such a way that 
the map $\set{0}\to B$ belongs to $\calJ$.  

By hypothesis we can find the indicated diagonal filler 
in the diagram 
\[
\begin{tikzcd} 
\set{0} \arrow[d] \arrow[rr] & & X \arrow[d,"p"] \\ 
B \arrow[r] \arrow[urr,dashed] & S' \arrow[r] & S 
\end{tikzcd} 
\]
Pre-composing the map $B\to X$ with the map 
$\Delta^1\to B$ defines an edge $u\colon \Delta^1
\to X$ which lifts $f$ and is an equivalence.  

Suppose now that $s=t$. Let $C$ denote the category with 
two objects $s$ and $s'$ and in which 
every set of morphisms in $C$ is 
equal to the set $\h(S)(s,s)$.  Composition 
in $C$ is given by composition in $\h(S)$.  
There is a canonical functor $C\to \h(S)$ 
which maps every object to $s$.    
Define a simplicial set $S'$ by the 
pullback diagram 
\[
\begin{tikzcd} 
S' \arrow[r] \arrow[d] & S \arrow[d] \\ 
C \arrow[r] & \h(S) 
\end{tikzcd} 
\]
Since $S$ is pre-fibrant and $C\to \h(S)$ is an 
inner fibration it follows that $S'$ is 
pre-fibrant.  Moreover $\h(S') = C$ by 
Lemma~\ref{lem:pullback lemma}.  The edge 
$f\colon \Delta^1\to S$ induces an 
edge $\tilde{f}\colon \Delta^1\to S'$ which 
is clearly an equivalence (its 
image in $C$ is an isomorphism).   The argument 
then proceeds as in the previous case. 
\end{proof} 

Next, we prove the following version of 
Proposition 2.5 from \cite{Bergner}.  Again, 
the proof is an adaptation of the arguments 
used in \cite{Bergner} to the present context. 

\begin{proposition} 
\label{prop:bergners 2.5}
Suppose that $p\colon X\to S$ is an inner fibration 
between simplicial sets such that the induced functor 
$\h(X)\to \h(S)$ is an isofibration.  Then $p$ has 
the right lifting property with respect to the 
maps in the set $\calJ$. 
\end{proposition}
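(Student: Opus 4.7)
The plan is to adapt Bergner's approach to her Proposition 2.5 to the simplicial set setting. First, I would apply Proposition~\ref{prop:descent pre-fibrant simp sets} to produce a pullback diagram
\[
\begin{tikzcd}
X \arrow[d, "p"] \arrow[r] & Y \arrow[d, "q"] \\
S \arrow[r] & T
\end{tikzcd}
\]
in which the horizontal arrows are inner anodyne, $q$ is an inner fibration, and $T$ (hence $Y$) is pre-fibrant. Since inner anodyne maps are categorical equivalences that are bijective on vertices, they induce isomorphisms on homotopy categories, so $\h(q)$ is again an isofibration. A lift $B\to Y$ of the composite $B\to S\to T$ respecting the specified vertex data pairs, via the pullback, with the original $\beta\colon B\to S$ to produce the required map $B\to X$. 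Hence one reduces to the case in which $S$ and $X$ are pre-fibrant.

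With $S$ and $X$ pre-fibrant, write $x = \alpha(0)$, $s = \beta(0) = p(x)$ and $t = \beta(1)$. Because $\set{0}\hookrightarrow B$ is a categorical equivalence, the induced functor $\h(\set{0})\to \h(B)$ is an equivalence of categories, so $[0]\cong [1]$ in $\h(B)$ and hence $[s]\cong [t]$ in $\h(S)$. Using Remark~\ref{rem:subcomplex S'} one chooses an equivalence $f\colon s\to t$ in $S$ representing this isomorphism; then Lemma~\ref{lem:char of isofibns} furnishes an equivalence $\tilde{f}\colon x\to y$ in $X$ with $p(\tilde{f})=f$. This identifies $y\in X$ as the intended image of $1\in B$.

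It remains to extend these vertex and edge choices to a full map $B\to X$ lying over $\beta$. One proceeds by induction over the non-degenerate simplices of $B$, at each stage solving a lifting problem against $p$. Inner configurations are handled directly by the inner fibration property of $p$. Outer configurations at the vertices $0$ and $1$ are handled by exploiting the equivalence $\tilde{f}$ together with the pre-fibrancy of $X$; Remark~\ref{rem:subcomplex S'} supplies the $2$-simplices witnessing an inverse to $\tilde{f}$ and its compositions with identities, which are the ingredients needed to fill outer horns involving $\tilde{f}$. The main obstacle is establishing the compatibility of these fillings uniformly across the entire induction. The argument here rests on Proposition~\ref{prop:key prop} and Remark~\ref{rem:fully faithful for pre-fibrant simp set}, which together ensure that in the pre-fibrant setting the naive mapping spaces compute the correct $\infty$-categorical mapping spaces, so that the obstructions controlled by $\h(p)$ being an isofibration are adequate to complete the inductive construction.
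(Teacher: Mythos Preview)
Your reduction to the pre-fibrant case via Proposition~\ref{prop:descent pre-fibrant simp sets} is correct and matches the paper. After that, however, the argument breaks down.

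First, a minor point: you choose an equivalence $f\colon s\to t$ directly in $S$ and lift it to $\tilde{f}$ in $X$, but $f$ need not be the image under $\beta$ of any edge of $B$. For the eventual map $B\to X$ to lie over $\beta$, the edge you lift must be $\beta(g)$ for some equivalence $g\colon 0\to 1$ in $B$. The paper (after also reducing $B$ to be pre-fibrant) chooses $g$ in $B$ first and then lifts $\beta(g)$; this is easily repaired in your outline.

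The real gap is the third paragraph. Building the lift $B\to X$ by induction over non-degenerate simplices of $B$ requires, at each stage, solving a lifting problem of the form $\partial\Delta^n\to X$ against $\Delta^n\to S$. These are boundary-filling problems, not inner horn problems, and an inner fibration $p$ with $\h(p)$ an isofibration simply does not have the right lifting property against them in general. Your appeal to ``outer configurations handled by the equivalence $\tilde{f}$ together with pre-fibrancy'' is not a mechanism that produces such fillers: pre-fibrancy only supplies fillers for inner horns whose $0$-th face is constant, and Proposition~\ref{prop:key prop} and Remark~\ref{rem:fully faithful for pre-fibrant simp set} control mapping spaces, not arbitrary boundary extensions over $B$.

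The paper avoids this obstacle entirely. After lifting an equivalence $g$ of $B$ to $\tilde{f}$ in the pullback $Y=B\times_S X$, it passes to the full simplicial subset $Z\subseteq Y$ on the two vertices, applies Lemma~\ref{lem:bergners lemma} to factor $\tilde{f}\colon\Delta^1\to Z$ through some $B'\in\calJ$, and then invokes the inner model structure on $\sSet(\set{0,1})$ (Theorem~\ref{thm:inner model structure}) together with Bergner's Lemma~2.6 to produce a map $B\to Z$ over $B$. The inner model structure is precisely what turns the problem into a genuine fibration--acyclic-cofibration lifting problem, which is what your simplex-by-simplex induction cannot do. You are missing both Lemma~\ref{lem:bergners lemma} and the use of Theorem~\ref{thm:inner model structure}; without them there is no argument.
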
 

\begin{proof} 
We need to show that the indicated 
diagonal filler exists in any diagram 
of the form 
\begin{equation}
\label{eq:pullback for A2}
\begin{tikzcd} 
\set{0} \arrow[d] \arrow[r] & X \arrow[d,"p"] \\ 
B   \arrow[r] \arrow[ur,dashed] &  S
\end{tikzcd} 
\end{equation}
where $\set{0}\to B$ is a map in $\calJ$.  
Proposition~\ref{prop:descent pre-fibrant simp sets} 
shows that without loss of generality 
we may suppose $S$, and hence $X$, 
are pre-fibrant simplicial sets.   

Using Remark~\ref{rem:condition *} 
and the small object argument, we see that 
we may choose an inner anodyne map $B\to B'$, 
where $B'$ is a pre-fibrant simplicial set, 
such that the map $B\to S$ factors 
through $B'$.  
Note that $\set{0}\to B'$ is a categorical 
equivalence and that $B'$ can be 
chosen to have  
countably many non-degenerate simplices.  
It follows that without loss of generality 
we may assume at the outset that $B$ is a 
pre-fibrant simplicial set.  

Let $g\colon 0\to 1$ be an equivalence 
in $B$ and write $g\colon \Delta^1\to B$ 
for the simplex classifying $g$.  Under 
the map $B\to S$, $g$ is mapped to 
an equivalence in $S$.  
Since the functor $\h(X)\to \h(S)$ 
is an isofibration, and $X$ and $S$ are 
pre-fibrant simplicial sets, we can use 
Lemma~\ref{lem:char of isofibns} to find the indicated 
map making the following diagram commute:  
\[
\begin{tikzcd} 
\set{0} \ar[d] \ar[r] & X \ar[dd,"p"] \\ 
\Delta^1 \ar[ur,dotted,"f"] \ar[d,"g"] & \\ 
B \ar[r] & S 
\end{tikzcd} 
\]
We need to show that we can extend $f\colon \Delta^1
\to X$ to a map $B\to X$ making the 
diagram~\eqref{eq:pullback for A2} commute.  Form a pullback 
diagram 
\[
\begin{tikzcd} 
Y \ar[r] \ar[d] & 
X \ar[d,"p"] \\ 
B \ar[r] & S 
\end{tikzcd} 
\]
and let $\tilde{f}\colon \Delta^1\to Y$ 
denote the map induced by $f\colon \Delta^1\to X$.  
It follows from Lemma~\ref{lem:pullback lemma} 
that $\tilde{f}$ is an equivalence in $Y$.  

Let $Z$ denote the full simplicial subset of 
$Y$ spanned by the objects $\tilde{f}(0)$ 
and $\tilde{f}(1)$; clearly $\tilde{f}(0)\neq \tilde{f}(1)$.  
We regard $Z$ as an object of $\sSet(\set{0,1})$   
Since $\tilde{f}$ is an equivalence in $Y$, 
and $Z\subseteq Y$ is a full simplicial 
subset, it follows that $\tilde{f}$ is an equivalence 
in $Z$.  

Observe that $Z\to B$ is an inner fibration in $\sSet(\set{0,1})$, 
since it is the composite of the inner fibrations 
$Z\to Y$ and $Y\to B$.  In particular 
it follows that $Z$ is a pre-fibrant simplicial set.  
Lemma~\ref{lem:bergners lemma} implies that the 
map $\tilde{f}\colon \Delta^1\to 
Z$ factorizes as 
$\Delta^1\to B'\xrightarrow{\alpha} Z$, in such a way that the 
map $\set{0}\to B'$ belongs to $\calJ$.  
Since both maps $\set{0}\to B$ and $\set{0}\to 
B'$ are categorical equivalences, a 2-out-of-3 argument 
shows that the composite map $B'\xrightarrow{\alpha} Z\to B$ is a 
categorical equivalence.  

We have a commutative diagram 
\begin{equation}
\label{eq:pullback for A3}
\begin{tikzcd} 
\Delta^1 \ar[r,equal] \ar[d] & \Delta^1 
\ar[r,"\tilde{f}"] \ar[d] & Z \ar[d] \\ 
B' \ar[r,"\sim"] & B \ar[dotted,ur] \ar[r,equal] & B 
\end{tikzcd} 
\end{equation}
in $\sSet(\set{0,1})$.  Lemma 2.6 from 
\cite{Bergner}, applied to the inner model structure on 
$\sSet(\set{0,1})$ (Theorem~\ref{thm:inner model structure}), 
shows that the indicated diagonal 
filler $\beta\colon B\to Z$ 
exists in the diagram~\eqref{eq:pullback for A3}.  Composing 
$\beta$ with the canonical map $Z\to Y$ defines a diagonal filler 
for the diagram~\eqref{eq:pullback for A2}.    
\end{proof}

\subsection{The Joyal model structure} 
In this section we give a proof 
of the existence of the Joyal model 
structure on $\sSet$.  As immediate outcomes 
of our proof we obtain Theorem~\ref{thm:A} 
and Theorem~\ref{thm:B}.  

Recall that the Joyal model structure 
is uniquely determined by the fact 
that its fibrant objects are the 
$\infty$-categories and its cofibrations 
are the monomorphisms (see \cite{DS}).  
Recall also that the fibrations in the 
Joyal model structure are called 
{\em categorical fibrations} in 
\cite{HTT}.

\begin{theorem}[Joyal/Lurie] 
\label{thm:joyal nodel str}
There is the structure of a model 
category on $\sSet$ for which 
\begin{itemize} 
\item the cofibrations are the 
monomorphisms, 
\item the fibrant objects are the 
$\infty$-categories, 
\item the weak equivalences 
are the categorical equivalences.  
\end{itemize} 
The model structure is cofibrantly generated 
and left proper.  
\end{theorem}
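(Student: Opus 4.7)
The plan is to apply the recognition theorem for cofibrantly generated model categories, following the template of the proof of Theorem~\ref{thm:inner model structure}. Let $\calC$ denote the class of monomorphisms, $\calW$ the class of categorical equivalences, $\calF_{\calW}$ the class of trivial Kan fibrations, $\calA$ the set consisting of the inner horn inclusions together with the set $\calJ$, and $\calF = \mathrm{RLP}(\calA)$, which by Proposition~\ref{prop:lifting wrt A1 and A2} is the class of inner fibrations $p$ with $\h(p)$ an isofibration. Two applications of the small object argument yield the weak factorization systems $(\calC,\calF_{\calW})$ and $(\mathrm{sat}(\calA),\calF)$. One checks the easy inclusions $\calF_{\calW}\subseteq\calF\cap\calW$ (trivial Kan fibrations are inner fibrations, are categorical equivalences, and $\h$ sends them to isomorphisms of categories, which are in particular isofibrations) and $\mathrm{sat}(\calA)\subseteq\calC\cap\calW$ (each element of $\calA$ is a monic categorical equivalence, and such maps are closed under pushouts by Lemma~\ref{lem:cat equivs stable under po} and under filtered colimits). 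The remaining inclusion $\calF\cap\calW\subseteq\calF_{\calW}$ is the main obstacle.

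To establish it, given $p\colon X\to S$ in $\calF\cap\calW$, the first step is to reduce to the case where $S$ and $X$ are pre-fibrant via Proposition~\ref{prop:descent pre-fibrant simp sets}. This produces a pullback square with an inner anodyne map $S\to T$ into a pre-fibrant $T$ and an inner fibration $q\colon Y\to T$ for which the induced map $X\to Y$ is also inner anodyne; and $Y$ is pre-fibrant by Remark~\ref{rem:condition *}. Since inner anodyne maps are categorical equivalences that are bijective on vertices, the functor $\h$ sends them to isomorphisms of categories, which immediately gives that $q\in\calW$ (by the 2-out-of-3 property applied to the commuting square) and that $\h(q)$ is an isofibration (as $\h(p)$ is, up to conjugation by isomorphisms). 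Since trivial Kan fibrations are stable under pullback, it suffices to prove $q$ is a trivial Kan fibration; so we may assume at the outset that $S$ and $X$ are pre-fibrant.

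Under this assumption, I would verify that $p$ has the right lifting property against $\partial\Delta^n\subseteq\Delta^n$ for every $n\geq 0$. For $n\geq 1$ the argument given in the proof of Theorem~\ref{thm:main thm for inner fibs} applies verbatim: the filling problem reduces to showing that the induced map $X_{x/}\times_X\{y\}\to S_{p(x)/}\times_S\{p(y)\}$ is a trivial Kan fibration, which holds since it is a Kan fibration between Kan complexes (by pre-fibrancy) that is a homotopy equivalence by Remark~\ref{rem:fully faithful for pre-fibrant simp set} (since $p$ is fully faithful as a categorical equivalence). The main obstacle is the $n=0$ case, where the hypothesis $p_0$ bijective from Theorem~\ref{thm:main thm for inner fibs} is absent: given $s\in S_0$, essential surjectivity of $\h(p)$ supplies $x\in X_0$ together with an equivalence $f\colon p(x)\to s$ in $S$, and Lemma~\ref{lem:char of isofibns} then lifts $f$ to an equivalence $u\colon x\to y$ in $X$ with $p(y)=s$. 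With this final inclusion established, the recognition theorem produces the model structure; it is cofibrantly generated by construction, left proper by Lemma~\ref{lem:cat equivs stable under po}, and Theorems~\ref{thm:A} and~\ref{thm:B} follow immediately from Proposition~\ref{prop:lifting wrt A1 and A2} and the definition of $\calA$ respectively.
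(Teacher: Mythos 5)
Your proposal is correct, and it shares the paper's overall frame (two weak factorization systems produced by the small object argument, Joyal's criterion reducing everything to the inclusion $\calF\cap\calW\subseteq\calF_{\calW}$, and Proposition~\ref{prop:lifting wrt A1 and A2} to identify $\calF$), but you prove the crucial inclusion by a genuinely different route. The paper does not stay in the pre-fibrant world at this stage: it chooses an inner anodyne map $S\to T$ with $T$ an $\infty$-category, factors $X\to T$ to obtain an inner fibration $q\colon Y\to T$ (a commutative square, not a pullback), deduces that $q$ is a categorical fibration and a categorical equivalence, hence a trivial Kan fibration by Lemma~\ref{lem:ps fibn + ce implies tkf}, and then invokes Theorem~\ref{thm:C} to see that the comparison map $X\to S\times_T Y$ is inner anodyne, so that $p$ is a retract of the trivial Kan fibration $S\times_{T}Y\to S$. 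You instead use Proposition~\ref{prop:descent pre-fibrant simp sets} to reduce (via an honest pullback) to pre-fibrant $X$ and $S$ and then rerun the boundary-lifting argument of Theorem~\ref{thm:main thm for inner fibs}, replacing the $n=0$ step --- the only place bijectivity on vertices was used --- by essential surjectivity of $\h(p)$ together with Lemma~\ref{lem:char of isofibns}; in effect you prove the ``if'' direction of Theorem~\ref{thm:A} directly, as a strengthening of Theorem~\ref{thm:main thm for inner fibs}, rather than deducing it from Theorem~\ref{thm:C} by a retract trick. Both are valid: the paper's version is shorter at this point because Theorem~\ref{thm:C} already packages the hard work, while yours avoids the retract argument at the cost of redoing the lifting analysis. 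If you write it up, make two small steps explicit: the isomorphism $\h(p)(x)\cong s$ furnished by essential surjectivity is represented by an actual edge of $S$ because $S$ is pre-fibrant (Lemma~\ref{corr:hty cat of pre-fibrant sset}), and the assertion that a categorical equivalence between pre-fibrant simplicial sets has essentially surjective $\h$ requires the usual fibrant-replacement argument via Proposition~\ref{prop:ff + ess surj iff equiv}; also record the fibrant-objects bullet of the theorem as the immediate special case $S=\Delta^0$ of Proposition~\ref{prop:lifting wrt A1 and A2}.
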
 

\begin{proof} 
The class $\calW$ of categorical 
equivalences in $\sSet$  
satisfies the 2-out-of-3 property.  
Let $\calC_{\calW}$ denote the 
class of monomorphisms generated 
by the set of maps $\calA$ and 
let $\calF$ denote the class of maps 
which have the right lifting property 
against the maps in $\calA$.    
The small object argument gives 
a weak factorization system 
$(\calC_{\calW},\calF)$.  

Let 
$\calF_{\calW}$ denote the 
class of trivial Kan fibrations and 
let $\calC$ denote the class 
of monomorphisms in $\sSet$.  
The small object argument gives 
a weak factorization system 
$(\calC,\calF_{\calW})$.  
Clearly 
we have $\calC_{\calW}\subseteq 
\calC\cap \calW$ and $\calF_{\calW}\subseteq 
\calF\cap \calW$.  To 
prove that the classes $\calC$, 
$\calF$ and $\calW$ determine a model 
structure it suffices 
(see Proposition E.1.11 of \cite{J2})
to prove that $\calF\cap \calW\subseteq 
\calF_{\calW}$.  

Suppose that $p\colon X\to S$ belongs 
to the class $\calF\cap\calW$.  Then 
$p$ is an inner fibration such that 
$\h(p)\colon \h(X)\to \h(S)$ is an 
isofibration (Proposition~\ref{prop:lifting wrt A1 and A2}), 
and $p$ is a categorical 
equivalence.  There exists a commutative diagram 
\[
\begin{tikzcd} 
X \arrow[r,"u"] \arrow[d,"p"] & 
Y \arrow[d,"q"] \\ 
S \arrow[r,"v"] & T 
\end{tikzcd} 
\]
in which $u$ and $v$ are inner 
anodyne, $q\colon Y\to T$ is an 
inner fibration and $T$ is an 
$\infty$-category.  It follows 
that $q$ is a categorical fibration.  
Since $q$ is also a categorical 
equivalence it follows that 
$q$ is a trivial Kan fibration 
(Lemma~\ref{lem:ps fibn + ce implies tkf}).  
Therefore the horizontal map 
\[
\begin{tikzcd}[column sep = small] 
X \arrow[rr] \arrow[dr,"p"'] & 
& S\times_{T}Y \arrow[dl]      \\ 
& S 
\end{tikzcd} 
\]
in the induced diagram is a 
categorical equivalence.  
Since it is also a monomorphism 
which is bijective on objects 
it must be inner anodyne 
(Theorem~\ref{thm:C}).  
Therefore, since $p\colon X\to S$ 
is an inner fibration, $p$ is a 
retract of the trivial Kan fibration 
$S\times_{T}Y\to S$ and hence 
is itself a trivial Kan fibration.  

To complete the proof we need to 
show that the fibrant objects for 
this model structure are the $\infty$-categories.  
It is clear that every fibrant 
object is an $\infty$-category.  
We must prove that if $\calC$ is 
an $\infty$-category then 
the canonical map $\calC\to 
\Delta^0$ has the right lifting 
property with respect to every map 
$\set{0}\to B$ in the set $\calJ$.  
This is clear however, since we 
can just retract $B$ onto $\set{0}$.  
\end{proof}  
  
As immediate corollaries of our 
proof of Theorem~\ref{thm:joyal nodel str} 
we obtain Theorem~\ref{thm:A} 
and Theorem~\ref{thm:B}.

\begin{proof}[Proof of Theorem~\ref{thm:A}]
From the proof of Theorem~\ref{thm:joyal nodel str} 
we see that a map is a categorical fibration 
if and only if it has the right lifting property 
with respect to the maps belonging to 
the set $\calA$.  The result then follows from 
Proposition~\ref{prop:lifting wrt A1 and A2}.
\end{proof} 

\begin{proof}[Proof of Theorem~\ref{thm:B}]
This follows immediately from the 
fact established in the proof 
above that the class of acyclic cofibrations 
in the Joyal model structure on $\sSet$ is 
equal to the class $\calC_{\calW}$.  
\end{proof}

\begin{remark} 
\label{rem:ingredients}
In addition to the results from 
this paper, the construction of 
the Joyal model structure that 
we have given here depends on the 
results proven in the first 13 pages of \cite{J1} 
(the key result being Theorem 1.3 of 
that paper), Corollary 2.3.6.5 of 
\cite{HTT}, and the fact that a 
left fibration with contractible 
fibers is a trivial Kan fibration
(Lemma 2.1.3.4 of \cite{HTT}).  
\end{remark} 

\section{Technical lemmas on inner anodyne maps} 
\label{sec:technical lemmas}

In this section we collect together some 
technical lemmas on inner anodyne maps 
that we use at various points in the 
paper.  A key ingredient in the proof 
of these lemmas is the right cancellation 
property for inner anodyne maps (Lemma~\ref{lem:right cancellation inner}).  

The following lemma is straightforward.  
We shall need it for the proof 
of Proposition~\ref{prop:key tech prop2}. 

\begin{lemma}
\label{lem:joins and inner anodynes} 
If $u\colon A\to B$ is an inner anodyne
map between simplicial sets, 
then the induced map $u\star \id_C\colon 
A\star C\to B\star C$ is inner anodyne 
for any simplicial set $C$.  
\end{lemma}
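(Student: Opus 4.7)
The plan is to reduce to the generating inner horn inclusions via a saturation argument, and then to treat the generators by a cellular filtration of $\Delta^n \star C$ over $\Lambda^n_i \star C$ via inner horn fillings. The functor $-\star C\colon \sSet \to \sSet$ preserves monomorphisms, pushouts along monomorphisms, transfinite compositions of monomorphisms, and retracts; this is clear from the explicit description of a simplex of $A \star C$ as a pair $(\sigma,\tau)$ with $\sigma$ a simplex of $A$ or empty and $\tau$ a simplex of $C$ or empty, not both empty. It follows that the class of monomorphisms $u\colon A \to B$ such that $u \star \id_C$ is inner anodyne for every simplicial set $C$ is weakly saturated, so it suffices to treat the case $u = \Lambda^n_i \subseteq \Delta^n$ with $0 < i < n$.

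Fix such a generator. The non-degenerate simplices of $\Delta^n$ not belonging to $\Lambda^n_i$ are exactly the top simplex and its face $d_i\Delta^n$ opposite the vertex $i$. Consequently, the non-degenerate simplices of $\Delta^n \star C$ not belonging to $\Lambda^n_i \star C$ come in pairs indexed by the non-degenerate simplices $\tau$ of $C$ together with the empty simplex $\tau = \emptyset$: for each such $\tau$ the pair consists of $\Delta^n \star \tau$, of dimension $n+\dim\tau+1$, together with its face $(d_i\Delta^n) \star \tau$ opposite the vertex $i$ of $\Delta^n$.

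I will attach these pairs to $\Lambda^n_i \star C$ in order of increasing $\dim\tau$, using at stage $k$ a pushout along a coproduct of copies of the inner horn inclusion $\Lambda^{n+k}_i \subseteq \Delta^{n+k}$, indexed by the $\tau$ with $\dim\tau = k-1$. For each such $\tau$, the required attaching map from $\Lambda^{n+k}_i$ into the complex built so far is well-defined because every face of $\Delta^n \star \tau$ other than the face opposite vertex $i$ is already present: the faces obtained by removing a vertex other than $i$ from the $\Delta^n$-factor lie in $\Lambda^n_i \star C$ from the outset, while the faces obtained by removing a vertex from $\tau$ correspond to pairs with strictly smaller $\dim\tau$ (or to degeneracies of such), and so lie in the complex built at earlier stages.

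This presents $\Lambda^n_i \star C \to \Delta^n \star C$ as a transfinite composition of pushouts of inner horn inclusions, hence as inner anodyne. The step I expect to demand the most care is the bookkeeping in the cellular filtration above: one must verify, allowing for the fact that a face of a non-degenerate simplex can be degenerate, that the attaching maps really factor through the previous stage and that the attached pairs exhaust precisely the missing non-degenerate simplices of $\Delta^n \star C$ without overlap.
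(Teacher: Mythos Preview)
Your argument is correct, but it takes a different route from the paper's. The paper observes that in the pushout square
\[
\begin{tikzcd}
A \arrow[r] \arrow[d,"u"] & A\star C \arrow[d] \\
B \arrow[r] & A\star C\cup B
\end{tikzcd}
\]
the right vertical map is inner anodyne as a pushout of $u$, and then invokes Lemma~2.1.2.3 of \cite{HTT} to conclude that the pushout-join $A\star C\cup B \to B\star C$ (for the pair $u$ and $\emptyset \hookrightarrow C$) is inner anodyne; the result follows by composition. No reduction to generators is needed.

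Your approach instead reduces to the inner horn inclusions by saturation and then gives an explicit cell-by-cell filtration of $\Delta^n\star C$ over $\Lambda^n_i\star C$, attaching the missing pairs $(\Delta^n\star\tau,\ d_i\Delta^n\star\tau)$ via inner horns $\Lambda^{n+k}_i\subseteq\Delta^{n+k}$ in order of $\dim\tau$. This is essentially a direct proof of the relevant case of Lurie's pushout-join lemma, so your argument is more self-contained but longer; the paper's is a two-line deduction from a cited result. Your preservation claims for $-\star C$ are fine (the functor $A\mapsto A\star C$ lands naturally in $\sSet_{C/}$, where it is a left adjoint, and the forgetful functor back to $\sSet$ creates connected colimits), and the bookkeeping you flag as delicate does go through: faces on the $\Delta^n$ side other than $d_i$ lie in $\Lambda^n_i\star C$, while faces on the $\tau$ side have strictly smaller $\tau$-part and hence were attached earlier, degenerate or not.
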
 

\begin{proof} 
In the pushout diagram 
\[
\begin{tikzcd} 
A \arrow[r] \arrow[d,"u"] & A\star C \arrow[d] \\ 
B \arrow[r] & A\star C\cup B 
\end{tikzcd} 
\]
both vertical maps are inner anodyne.  The 
canonical map $A\star C\cup B\to B\star C$ 
is inner anodyne by Lemma 2.1.2.3 of \cite{HTT}.  
Therefore the composite map 
\[
A\star C\to A\star C\cup B\to B\star C 
\]
is inner anodyne.  
\end{proof} 

The next lemma from \cite{S2} 
is a straightforward application 
of the right cancellation property.  

\begin{lemma}[\cite{S2}] 
\label{lem:pushout lemma for ia}
Suppose given a commutative diagram 
of simplicial sets of the 
form 
\[
\begin{tikzcd} 
A_1 \arrow[d] & 
A_0 \arrow[l] \arrow[d] 
\arrow[r] & A_2 \arrow[d] \\ 
B_1 & \arrow[l] B_0 
\arrow[r] & B_2 
\end{tikzcd} 
\]
in which at least one of the 
squares is a pullback.  
If all of the vertical 
arrows are inner anodyne 
then the induced map 
\[
A_1\cup_{A_0}A_2 \to 
B_1\cup_{B_0}B_2 
\]
is also inner anodyne.  
\end{lemma}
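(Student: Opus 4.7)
The plan is to proceed by a right cancellation argument combined with iterated pushouts. By the symmetry $A_1 \leftrightarrow A_2$, $B_1 \leftrightarrow B_2$ (which leaves the pushouts $A_1 \cup_{A_0} A_2$ and $B_1 \cup_{B_0} B_2$ unchanged), I may assume without loss of generality that the right square is a pullback, so that $A_0 \cong B_0 \times_{B_2} A_2$.

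First I would introduce the intermediate pushout $P := B_0 \cup_{A_0} A_2$. Because $A_0 \to B_0$ is inner anodyne, the cobase change $A_2 \to P$ is inner anodyne as well. The universal property of $P$ produces a canonical map $P \to B_2$ through which $A_2 \to B_2$ factors, and the pullback hypothesis on the right square ensures that this map $P \to B_2$ is a monomorphism (the images of $B_0$ and $A_2$ in $B_2$ intersect precisely in the image of $A_0$). Since the composite $A_2 \to P \to B_2$ is the inner anodyne map $A_2 \to B_2$ and the first factor $A_2 \to P$ is inner anodyne, the right cancellation property (Lemma~\ref{lem:right cancellation inner}) gives that $P \to B_2$ is itself inner anodyne.

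Next I would decompose the induced map of pushouts as
\[
A_1 \cup_{A_0} A_2 \longrightarrow B_1 \cup_{A_0} A_2 \longrightarrow B_1 \cup_{B_0} B_2.
\]
The first arrow is the pushout of $A_1 \to B_1$ along $A_1 \to A_1 \cup_{A_0} A_2$, hence inner anodyne. Using associativity of pushouts, the target of the second arrow can be rewritten as $B_1 \cup_{A_0} A_2 \cong B_1 \cup_{B_0}(B_0 \cup_{A_0} A_2) = B_1 \cup_{B_0} P$, so the second arrow is the pushout of $P \to B_2$ along $P \to B_1 \cup_{B_0} P$, and is thus inner anodyne by the previous step. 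Composing, the induced map $A_1 \cup_{A_0} A_2 \to B_1 \cup_{B_0} B_2$ is inner anodyne.

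The main obstacle is Step 1: showing that $P \to B_2$ is a monomorphism so that right cancellation can be applied. This is precisely where the pullback hypothesis on one of the squares enters the argument; without it, the cancellation step would fail, and the rest of the decomposition would not go through.
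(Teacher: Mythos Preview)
Your overall strategy is natural, but Step~1 contains a genuine gap: the map $P = B_0 \cup_{A_0} A_2 \to B_2$ need not be a monomorphism, so Lemma~\ref{lem:right cancellation inner} does not apply to the factorization $A_2 \to P \to B_2$. Your parenthetical justification --- that the images of $B_0$ and $A_2$ in $B_2$ meet exactly in the image of $A_0$ --- is a true consequence of the pullback hypothesis, but it yields injectivity of $P \to B_2$ only when $B_0 \to B_2$ is itself injective, and the lemma makes no such assumption on the horizontal maps.

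Here is an explicit counterexample. Take $A_0 = A_2 = \Lambda^2_1$ with the identity map between them, $B_2 = \Delta^2$ with $A_2 \to B_2$ the inner horn inclusion, and $B_0 = \Delta^2 \cup_{\Lambda^2_1} \Delta^2$ with $B_0 \to B_2$ the fold map and $A_0 \to B_0$ the inclusion of the common horn. The right square is a pullback (the preimage of $\Lambda^2_1 \subseteq \Delta^2$ under the fold is exactly the glued copy of $\Lambda^2_1$), and both $A_0 \to B_0$ and $A_2 \to B_2$ are inner anodyne. But $P = B_0 \cup_{A_0} A_2 \cong B_0$, and $P \to B_2$ is the fold map, which identifies two distinct $2$-simplices and so is not a monomorphism. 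Taking in addition $A_1 = A_0$ and $B_1 = B_0$, your second factor $B_1 \cup_{B_0} P \to B_1 \cup_{B_0} B_2$ becomes this same fold map, which is not inner anodyne --- so the decomposition genuinely breaks down here, even though the overall induced map $\Lambda^2_1 \to \Delta^2$ is of course inner anodyne. The pullback hypothesis must therefore enter the argument in a different way.
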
 
 
We have following refinement 
of Lemma 5.4.5.10 from \cite{HTT}.  

\begin{lemma} 
\label{lem:5.4.5.10}
Suppose that $A$, $B$ and $C$ are 
simplicial sets and that $B$ is weakly 
contractible.  Then the canonical map 
\[
A\star B\cup B\star C\subseteq A\star 
B\star C
\]
is inner anodyne.  
\end{lemma}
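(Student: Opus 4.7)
The plan is to combine Theorem~\ref{thm:C} with a cellular reduction. Since both $A\star B\cup B\star C$ and $A\star B\star C$ share the vertex set $A_0\sqcup B_0\sqcup C_0$, the inclusion is a monomorphism that is bijective on $0$-simplices. By Theorem~\ref{thm:C} it is therefore inner anodyne if and only if it is a categorical equivalence, so the task reduces to verifying this categorical equivalence (which is the content of Lemma 5.4.5.10 in \cite{HTT}).

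I would first reduce to the case $A=\Delta^m$, $C=\Delta^n$ by a double cellular induction using Lemma~\ref{lem:cat equivs stable under po}. Keeping $B$ and $C$ fixed, a cell attachment $A=A'\cup_{\partial\Delta^m}\Delta^m$ yields compatible pushout presentations: $A\star B\star C$ is the pushout of $A'\star B\star C\leftarrow\partial\Delta^m\star B\star C\to\Delta^m\star B\star C$, and $A\star B\cup B\star C$ is the pushout of $A'\star B\cup B\star C\leftarrow\partial\Delta^m\star B\cup B\star C\to\Delta^m\star B\cup B\star C$. Repeated application of Lemma~\ref{lem:cat equivs stable under po} then propagates categorical equivalences from the pieces $A'$, $\partial\Delta^m$, $\Delta^m$ to the glued object $A$. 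Iterating reduces $A$ to $\Delta^m$, and a symmetric induction reduces $C$ to $\Delta^n$.

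For the base case $A=\Delta^m$, $C=\Delta^n$, pick a vertex $b\in B$ and consider the square
\[
\begin{tikzcd}
\Delta^m\star\{b\}\cup\{b\}\star\Delta^n \arrow[r] \arrow[d] & \Delta^m\star\{b\}\star\Delta^n \arrow[d] \\
\Delta^m\star B\cup B\star\Delta^n \arrow[r] & \Delta^m\star B\star\Delta^n.
\end{tikzcd}
\]
The top row is the inclusion of two maximal faces of $\Delta^{m+n+2}$ meeting at the vertex corresponding to $\{b\}$; I would verify directly that it is inner anodyne, by first attaching the missing $2$-simplices $\Delta^{\{i,b,j\}}$ using $\Lambda^2_1$-horns and then proceeding inductively through higher dimensions. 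By the $2$-out-of-$3$ property for categorical equivalences, it would then suffice to show that the vertical maps are categorical equivalences. This is the main obstacle: weak contractibility of $B$ does not provide a cellular expansion of $B$ via inner anodyne attachments, so a direct cell-by-cell argument fails. The cleanest route is a mapping-space argument — for every $\infty$-category $\calC$, showing that $\Fun(-,\calC)$ sends both vertical maps to equivalences of $\infty$-categories, by using the join/slice adjunction together with the fact that, for weakly contractible $B$, the diagram $\infty$-category $\Fun(B,\calK)$ into any Kan complex $\calK$ is homotopy equivalent to $\calK$ via evaluation at $b$.
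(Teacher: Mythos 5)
Your proposal has two genuine problems. First, it is circular in the context of this paper: you reduce the statement to Theorem~\ref{thm:C}, but Theorem~\ref{thm:C} is proved via Theorem~\ref{thm:main thm for inner fibs}, which rests on Proposition~\ref{prop:descent pre-fibrant simp sets}, which rests on Proposition~\ref{prop:key tech prop2}, whose proof uses Lemma~\ref{lem:variant of 5.4.5.10} --- and the base case of that lemma is exactly the statement you are asked to prove. The paper deliberately avoids this by giving a purely combinatorial argument: it re-runs the steps of Lurie's proof of Lemma 5.4.5.10 in \cite{HTT} with ``monic categorical equivalence'' replaced throughout by ``inner anodyne,'' using only Lemma~\ref{lem:pushout lemma for ia} and the right cancellation property (Lemma~\ref{lem:right cancellation inner}), so no appeal to Theorem~\ref{thm:C} or to the Joyal model structure is needed.

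Second, even granting Theorem~\ref{thm:C}, your base case breaks down. The vertical maps in your comparison square are in general \emph{not} categorical equivalences, so the 2-out-of-3 step cannot be carried out. Take $m=n=0$, $B=\Delta^1$ (which is weakly contractible) and $b=0$: the right-hand vertical map is the face inclusion $\Delta^{\set{0,1,3}}\subseteq\Delta^3$, which is not essentially surjective (the object $2$ of $\h(\Delta^3)=[3]$ is not isomorphic to anything in the image), hence not a categorical equivalence. The point is that weak contractibility of $B$ does not make $\set{b}\hookrightarrow B$ a categorical equivalence, and joining with simplices does not repair this. Your proposed mapping-space fix runs into the same wall: after the join/slice adjunction the relevant targets are slice $\infty$-categories such as $\calC_{\sigma/}$, which are not Kan complexes, so the fact that $\Fun(B,\calK)\simeq\calK$ for a Kan complex $\calK$ does not apply. (Your outer cellular reduction also quietly uses a gluing lemma for categorical equivalences across a map of pushout squares, which is more than Lemma~\ref{lem:cat equivs stable under po} literally provides, but that is a repairable gap; the base case and the circularity are not.)
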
 

\begin{proof} 
One can check that the 
steps (1) -- (7) of Lurie's proof 
of Lemma 5.4.5.10 go through with 
monic categorical equivalences 
replaced by inner anodyne maps 
using Lemma~\ref{lem:pushout lemma for ia} 
and the right cancellation 
property for inner anodyne 
maps (Lemma~\ref{lem:right cancellation inner}).  
\end{proof}

We shall make use of the following generalization 
of Lemma~\ref{lem:5.4.5.10} in 
the proof of Proposition~\ref{sec:proof of gen lurie lemma}.

\begin{lemma} 
\label{lem:variant of 5.4.5.10}
Let $A_0,\ldots,A_n$ be simplicial sets where 
$n\geq 2$.  Suppose that $0<i<n$ and that 
each of the simplicial sets 
$A_1,\ldots,A_{n-1}$ is weakly contractible.  
Then 
the inclusion 
\begin{equation}
\label{eq:variant of 5.4.5.10}
\bigcup_{0\leq j\leq n,j\neq i} 
A_0\star \cdots \star \widehat{A}_j\star \cdots 
\star A_n\to A_0\star \cdots \star A_n 
\end{equation} 
is inner anodyne, where the hat indicates omission.  
\end{lemma}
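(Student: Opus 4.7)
The plan is to proceed by induction on $n$, with the base case $n=2$ being precisely Lemma~\ref{lem:5.4.5.10} applied to $A=A_0$, $B=A_1$, $C=A_2$ (there only $i=1$ is admissible).

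For the inductive step with $n\geq 3$, I will exploit the associativity of the join. Writing $B = A_0\star\cdots\star A_{n-1}$ and using that the join distributes over unions on either factor (a consequence of the pointwise description of the join), one rewrites the left-hand side of \eqref{eq:variant of 5.4.5.10} as
\[
L = L'\star A_n\ \cup\ B,
\]
where $L'\subseteq B$ is the analogous union obtained by omitting one of the factors $A_j$ with $0\leq j\leq n-1$ and $j\neq i$. A direct check gives $L'\star A_n\cap B = L'$ inside $B\star A_n$, so that the square
\[
\begin{tikzcd}
L' \arrow[r,hook] \arrow[d,hook] & L'\star A_n \arrow[d,hook]  \\
B \arrow[r,hook] & L
\end{tikzcd}
\]
is a pushout.

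Assume now that $0<i<n-1$. By the inductive hypothesis applied to $A_0,\ldots,A_{n-1}$ with the same index $i$ (note that $A_1,\ldots,A_{n-2}$ are indeed weakly contractible), the inclusion $L'\to B$ is inner anodyne. Lemma~\ref{lem:joins and inner anodynes} then gives that $L'\star A_n\to B\star A_n$ is inner anodyne, while the pushout square above gives that $L'\star A_n\to L$ is inner anodyne. Factoring $L'\star A_n\to B\star A_n$ through $L$ and invoking the right cancellation property for inner anodyne maps (Lemma~\ref{lem:right cancellation inner}) yields that $L\to A_0\star\cdots\star A_n$ is inner anodyne. The symmetric argument, decomposing $A_0\star\cdots\star A_n = A_0\star(A_1\star\cdots\star A_n)$ and invoking the inductive hypothesis on $A_1,\ldots,A_n$, handles the complementary range $1<i<n$. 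For $n\geq 3$ these two ranges together exhaust all admissible values of $i$. The main technical obstacle I anticipate is verifying cleanly the set-theoretic identities $L = L'\star A_n\cup B$ and $L'\star A_n\cap B = L'$ that underwrite the pushout square; once these are confirmed, the rest is a routine combination of right cancellation with the stability of inner anodyne maps under pushouts and under joining with a fixed simplicial set.
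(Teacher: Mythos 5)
Your proposal is correct and is essentially the paper's own argument: the same induction on $n$, the same pushout square built from $L'\subseteq B=A_0\star\cdots\star A_{n-1}$ and $L'\star A_n$ (whose verification via $L=B\cup(L'\star A_n)$ and $L'\star A_n\cap B=L'$ is indeed routine), and the same combination of Lemma~\ref{lem:joins and inner anodynes} with the right cancellation property (Lemma~\ref{lem:right cancellation inner}). The only cosmetic difference is that you handle $i=n-1$ by running the mirror-image argument (which requires the op-dual, left-join version of Lemma~\ref{lem:joins and inner anodynes}), whereas the paper disposes of that case at the outset by the same duality, reducing to $i<n-1$.
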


\begin{proof} 
The proof is by induction on $n$.  
Lemma~\ref{lem:5.4.5.10} gives 
the statement when $n=2$.   
Let $0<i<n$ where $n\geq 3$.  Without loss of generality 
we may suppose by a duality argument that $i<n-1$.  
We have a pushout diagram of the form 
\[
\begin{tikzcd} 
\displaystyle{\bigcup_{0\leq j\leq n-1,j\neq i}}
A_0\star \cdots \star \widehat{A}_j\star \cdots 
\star A_{n-1} \arrow[r] \arrow[d] & 
\displaystyle{\bigcup_{0\leq j\leq n-1,j\neq i}}
A_0\star \cdots \star \widehat{A}_j\star \cdots 
\star A_{n}\arrow[d]                                 \\ 
A_0\star \cdots \star A_{n-1} \arrow[r] &
\displaystyle{\bigcup_{0\leq j\leq n,j\neq i}}
A_0\star \cdots \star \widehat{A}_j\star \cdots 
\star A_{n}
\end{tikzcd} 
\]
The left hand vertical map is inner anodyne by 
the inductive hypothesis and hence so is the right hand 
vertical map.  The map 
\[
\bigcup_{0\leq j\leq n-1,j\neq i} 
A_0\star \cdots \star \widehat{A}_j\star \cdots \star 
A_n \to (A_0\star \cdots \star A_{n-1})\star A_n 
\]
is inner anodyne by Lemma~\ref{lem:joins and inner anodynes}.  
The result then follows 
by the right cancellation property of inner anodyne maps 
(Lemma~\ref{lem:right cancellation inner}).
\end{proof} 

\begin{remark}
Observe that if one of the simplicial sets 
$A_1,\ldots A_{i-1}, A_{i+1},\ldots, 
A_{n-1}$ in the statement of 
Lemma~\ref{lem:variant of 5.4.5.10} 
is empty then the inclusion~\eqref{eq:variant of 5.4.5.10} 
is an equality.  Therefore the hypotheses 
of Lemma~\ref{lem:variant of 5.4.5.10} 
can be relaxed to cover the cases where 
each of the simplicial sets $A_1,\ldots, 
A_{i-1},A_{i+1},\ldots, A_{n-1}$ are 
either empty or weakly contractible and 
$A_i$ is weakly contractible.  
\end{remark} 

\begin{corollary} 
\label{cor:variant of 5.4.5.10}
Let $A_0,\ldots, A_n$ be simplicial sets where $n\geq 5$.  If each 
of $A_1,\ldots, A_{n-1}$ is weakly contractible then 
for any $i+1<j<n$ the inclusion 
\begin{equation}
\label{eq:corr to variant of 5.4.5.10}
\bigcup^n_{\stackrel{\scriptstyle{k=0}}{k\neq i,j}} A_0\star \cdots 
\star \widehat{A}_k\star \cdots \star A_n 
\subseteq A_0\star \cdots \star A_n 
\end{equation} 
is inner anodyne.  
\end{corollary}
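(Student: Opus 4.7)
The plan is to factor the inclusion~\eqref{eq:corr to variant of 5.4.5.10} through an intermediate simplicial subset of $C := A_0 \star \cdots \star A_n$ and to show each stage is inner anodyne by a direct application of Lemma~\ref{lem:variant of 5.4.5.10}. For each $0 \leq k \leq n$, write $B_k \subseteq C$ for the $k$-th face $A_0 \star \cdots \star \widehat{A}_k \star \cdots \star A_n$. Let $B = \bigcup_{k \neq i, j} B_k$ denote the domain of~\eqref{eq:corr to variant of 5.4.5.10}, and set $D = \bigcup_{k \neq j} B_k$. I will argue that both $D \subseteq C$ and $B \subseteq D$ are inner anodyne; stability of inner anodynes under composition then yields the result.

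The inclusion $D \subseteq C$ is immediate from Lemma~\ref{lem:variant of 5.4.5.10} applied to $A_0, \ldots, A_n$ with the single omitted index $j$: the condition $0 < j < n$ holds since $j > i+1 \geq 1$ and $j < n$, while the middle factors $A_1, \ldots, A_{n-1}$ are weakly contractible by hypothesis. For $B \subseteq D$, observe that $D = B \cup B_i$ and that $B \cap B_i = \bigcup_{k \neq i, j}(B_k \cap B_i)$, where $B_k \cap B_i$ is the codimension-$2$ face obtained by deleting both $A_i$ and $A_k$. This exhibits $B \subseteq D$ as a pushout of the inclusion $B \cap B_i \hookrightarrow B_i$, so it suffices to prove the latter inclusion is inner anodyne. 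But this is precisely the content of Lemma~\ref{lem:variant of 5.4.5.10} applied to the $n$-fold join $B_i = A_0 \star \cdots \star \widehat{A}_i \star \cdots \star A_n$ with the face opposite $A_j$ omitted.

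The only real obstacle is the bookkeeping needed to verify the hypotheses of Lemma~\ref{lem:variant of 5.4.5.10} for this second application. After relabelling the factors of $B_i$ in order as positions $0, 1, \ldots, n-1$, the factor $A_j$ (with $j > i + 1 > i$) sits at position $j - 1$, so we need $0 < j - 1 < n - 1$; this reduces to $1 < j < n$, which holds since $j > i + 1 \geq 1$ and $j < n$. The middle factors of $B_i$ are a subset of $\{A_1, \ldots, A_{n-1}\}$ and hence weakly contractible. The hypothesis $n \geq 5$ guarantees enough separation between the indices $i$ and $j$ for this reindexed version of the lemma to apply in every admissible case.
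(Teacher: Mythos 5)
Your argument is correct, and it takes a genuinely different route from the paper. The paper's proof is a reduction to Lemma~\ref{lem:variant of 5.4.5.10} \emph{via} Lemma 2.1.2.3 of \cite{HTT}: splitting the join into the blocks $A_0\star\cdots\star A_{j-1}$ and $A_j\star\cdots\star A_n$, the domain of~\eqref{eq:corr to variant of 5.4.5.10} becomes a union of the form $(P'\star Q)\cup(P\star Q')$, and the join-pairing lemma of \cite{HTT} applies once one knows the generalized horn $P'\subseteq P$ is inner (hence right) anodyne. You avoid that external ingredient entirely: writing $B$ for the domain and $D=B\cup B_i$, you first adjoin the missing face $B_i$ by exhibiting $B\subseteq D$ as a pushout of $B\cap B_i\subseteq B_i$, which is itself an instance of Lemma~\ref{lem:variant of 5.4.5.10} for the $n$-fold join $B_i$ with omitted position $j-1$, and then fill the remaining one-face horn $D\subseteq A_0\star\cdots\star A_n$ by a second application of the same lemma; closure of inner anodyne maps under cobase change and composition finishes the proof. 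Your bookkeeping is accurate: the identification $B\cap B_i=\bigcup_{k\neq i,j}(B_k\cap B_i)$ holds since intersections of subcomplexes distribute over unions, the constraints $1<j<n$ and the weak contractibility of the middle factors of $B_i$ (a subset of $A_1,\ldots,A_{n-1}$, even when $i=0$) are exactly what the lemma needs. What each approach buys: the paper's is a one-line deduction for a reader who has \cite{HTT} at hand, while yours stays entirely within the paper's own toolkit, covers $i=0$ without modification, and incidentally shows that the hypothesis $n\geq 5$ is superfluous --- which makes your closing sentence, attributing a role to $n\geq 5$, the only inaccurate (though harmless) remark, since nothing in your verification actually uses it.
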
 

\begin{proof} 
This follows easily from 
Lemma~\ref{lem:variant of 5.4.5.10} 
and Lemma 2.1.2.3 of \cite{HTT}.
\end{proof} 

\begin{remark} 
\label{rem:variant of corr to 5.4.5.10}
If one of the simplicial sets $A_1,\ldots, A_{i-1},A_{i+1},\ldots, 
A_{n-1}$ in the statement of Corollary~\ref{cor:variant of 5.4.5.10} 
is empty then the inclusion~\eqref{eq:corr to variant of 5.4.5.10} 
is an isomorphism.  Therefore the hypotheses of 
Corollary~\ref{cor:variant of 5.4.5.10} can be relaxed to 
cover the cases where each of the simplicial 
sets $A_1,\ldots, A_{i-1},A_{i+1},\ldots, A_{n-1}$ 
are either empty or weakly contractible and 
$A_i$ is weakly contractible.   
\end{remark}

Finally, we have the following lemma which  
we have made use of above in the proof of 
Proposition~\ref{prop:key prop}.  

\begin{lemma}
\label{lem:ladders of inner anodynes}
Suppose given a commutative diagram in $\sSet$ of the 
form 
\[
\begin{tikzcd} 
A_0 \ar[r,hook] \ar[d] & A_1 \ar[r,hook] \ar[d] & A_2 \ar[r,hook] \ar[d] & \cdots \\ 
B_0 \ar[r,hook] & B_1 \ar[r,hook] & B_2 \ar[r,hook] & \cdots 
\end{tikzcd} 
\]
in which the horizontal arrows are inclusions.  If the map 
$A_n\to B_n$ is inner anodyne for all $n\geq 0$, then the 
induced map 
\[
\varinjlim_n A_n \to \varinjlim_n B_n 
\]
is inner anodyne.  
\end{lemma}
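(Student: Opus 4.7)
The plan is to exhibit the induced map $A := \varinjlim_n A_n \to B := \varinjlim_n B_n$ as a transfinite composition of pushouts of inner anodyne maps; this suffices because the class of inner anodyne maps is saturated by definition.

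For each $n \geq 0$, I would form the pushout
\[
\begin{tikzcd}
A_n \ar[r] \ar[d] & A \ar[d] \\
B_n \ar[r] & X_n
\end{tikzcd}
\]
and set $X_n := A \cup_{A_n} B_n$. These fit together with the ladder, yielding a sequence $A \to X_0 \to X_1 \to X_2 \to \cdots$ whose colimit is $A \cup_{\varinjlim A_n} \varinjlim B_n = A \cup_A B = B$. The initial map $A \to X_0$ is directly a pushout of the inner anodyne map $A_0 \to B_0$ along $A_0 \to A$, hence is inner anodyne.

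The main step is to show that each transition $X_n \to X_{n+1}$ is inner anodyne. Factor $A_{n+1} \to B_{n+1}$ through the pushout $A_{n+1} \to A_{n+1} \cup_{A_n} B_n \to B_{n+1}$. Then by associativity of colimits, the identity $X_n = A \cup_{A_n} B_n = A \cup_{A_{n+1}} (A_{n+1} \cup_{A_n} B_n)$ yields a pushout square
\[
\begin{tikzcd}
A_{n+1} \cup_{A_n} B_n \ar[r] \ar[d] & X_n \ar[d] \\
B_{n+1} \ar[r] & X_{n+1}
\end{tikzcd}
\]
so it suffices to prove that the left vertical map $A_{n+1} \cup_{A_n} B_n \to B_{n+1}$ is inner anodyne. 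For this, note that $A_{n+1} \to A_{n+1} \cup_{A_n} B_n$ is a pushout of the inner anodyne $A_n \to B_n$ along $A_n \to A_{n+1}$, hence is itself inner anodyne; its composite with $A_{n+1} \cup_{A_n} B_n \to B_{n+1}$ equals $A_{n+1} \to B_{n+1}$, which is inner anodyne by hypothesis. Right cancellation (Lemma~\ref{lem:right cancellation inner}) then forces $A_{n+1} \cup_{A_n} B_n \to B_{n+1}$ to be inner anodyne, as required.

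The only real point that requires care is the associativity-of-colimits identification $X_{n+1} = X_n \cup_{A_{n+1} \cup_{A_n} B_n} B_{n+1}$; once that is verified, the proof reduces cleanly to a pushout plus right cancellation plus transfinite composition, each of which is either already established in the paper or is a formal property of saturated classes.
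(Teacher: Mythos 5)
Your telescope is set up correctly: the identifications $X_n = A\cup_{A_n}B_n$, $X_{n+1}\cong X_n\cup_{(A_{n+1}\cup_{A_n}B_n)}B_{n+1}$ and $\varinjlim X_n = B$ are all fine (this is in fact the same ``standard argument plus right cancellation'' skeleton as the paper's own proof), and the point you flag as delicate, the associativity of pushouts, is not the problem. The genuine gap is the last step: Lemma~\ref{lem:right cancellation inner} applies only when \emph{both} maps are monomorphisms, and the comparison map $A_{n+1}\cup_{A_n}B_n\to B_{n+1}$ need not be monic under the stated hypotheses --- it is monic precisely when $A_n = A_{n+1}\cap B_n$ inside $B_{n+1}$, i.e.\ when the square is a pullback, which is not assumed. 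Without that, the claim you want is simply false: take $A_0 = \Lambda^2_1\subseteq B_0=\Delta^2$ and $A_n = B_n = \Delta^2$ with identity vertical maps for $n\geq 1$ (identities lie in every saturated class, so all hypotheses of the lemma hold). Then $A_1\cup_{A_0}B_0 = \Delta^2\cup_{\Lambda^2_1}\Delta^2\to B_1=\Delta^2$ identifies the two copies of the face $\Delta^{\set{0,2}}$, so it is not a monomorphism, hence not inner anodyne, and the transition $X_0\to X_1$ in your factorization is not inner anodyne either (the conclusion of the lemma is not contradicted: here $\varinjlim A_n\to\varinjlim B_n$ is an isomorphism). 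A variant with no identity maps: $A_0=\Delta^{\set{0,2}}\cup\Delta^{\set{2,3}}\subseteq B_0=\Delta^{\set{0,2,3}}$ and $A_n=\Lambda^3_1\subseteq B_n=\Delta^3$ for $n\geq1$, where the edge $\Delta^{\set{0,3}}$ gets doubled in the pushout.

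To be fair, the paper's own proof is terse at exactly this point and invokes the same reduction; the argument (yours and the paper's) is complete once one adds the hypothesis that each square is a pullback, equivalently $A_n = A_{n+1}\cap B_n$, for then $A_{n+1}\cup_{A_n}B_n$ is the subcomplex $A_{n+1}\cup B_n\subseteq B_{n+1}$ and your right-cancellation step is legitimate. That extra condition does hold where the lemma is actually used, namely for the skeletal filtrations in Proposition~\ref{prop:key prop}, where $\sk_{n+1}S\cap\sk_nT=\sk_nS$. So you should either build the pullback condition into the statement you prove, or supply a genuinely different argument for the lemma in the stated generality (for instance via stability of categorical equivalences under filtered colimits together with Theorem~\ref{thm:C}, which however would be circular at this stage of the paper).
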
 

\begin{proof} 
A standard argument shows that 
it suffices to prove that the canonical map 
\[
A_{n+1}\cup B_n\to B_{n+1} 
\]
induced from the commutative diagram 
\[
\begin{tikzcd} 
A_n \ar[r] \ar[d] & A_{n+1} \ar[d] \\ 
B_{n} \ar[r] & B_{n+1} 
\end{tikzcd} 
\]
is inner anodyne.  But this is a straightforward 
consequence of the right cancellation 
property (Lemma~\ref{lem:right cancellation inner}) 
of the class of inner anodyne maps, using 
the fact that the vertical maps in the diagram 
above are inner anodyne.  
\end{proof}

\section{Proof of Proposition~\ref{prop:key tech prop2}}
\label{sec:proof of gen lurie lemma}

In this section we prove Proposition~\ref{prop:key tech prop2}.  
For the proof we will need the 
following straightforward lemma 
(in fact, for the application we have 
in mind --- see the proof of 
Proposition~\ref{prop:descent pre-fibrant simp sets} --- 
we could use an ad-hoc argument 
which does not refer to cocartesian edges).  

\begin{lemma} 
\label{lem:cocart lemma}
Let $p\colon X\to S$ be an inner fibration between simplicial sets.  
Suppose given a commutative diagram of the form 
\[
\begin{tikzcd} 
\Delta^{\set{0,2,\ldots, n}} \cup 
\Delta^{\set{0,1}} \arrow[d] \arrow[r,"u"] & X \arrow[d,"p"] \\ 
\Delta^{\set{0,\ldots, n}} \arrow[r] \arrow[ur,dashed] & S 
\end{tikzcd} 
\]
If the edge $u|\Delta^{\set{0,1}}$ is $p$-cocartesian then 
the indicated diagonal filler exists.  
\end{lemma}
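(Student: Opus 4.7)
The plan is to translate the lifting problem, via the join--slice adjunction, into a lifting problem against the restriction map
\[
X_{e/}\to X_{x/}\times_{S_{p(x)/}} S_{p(e)/},
\]
where $e = u|\Delta^{\set{0,1}}$ and $x = u(0)$. The hypothesis that $e$ is $p$-cocartesian ensures that this restriction map is a trivial Kan fibration (Proposition 2.4.1.5 of \cite{HTT}), which will supply the required lift.

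To set this up, I would exploit the join decompositions $\Delta^n = \Delta^{\set{0,1}}\star \Delta^{\set{2,\ldots,n}}$ and $\Delta^{\set{0,2,\ldots,n}} = \set{0}\star \Delta^{\set{2,\ldots,n}}$. Under the join--slice adjunction, the restriction $u|\Delta^{\set{0,2,\ldots,n}}$ corresponds to a map $\sigma\colon \Delta^{\set{2,\ldots,n}}\to X_{x/}$; the prescribed map $\Delta^n \to S$ corresponds to a map $\tau\colon \Delta^{\set{2,\ldots,n}}\to S_{p(e)/}$; and the commutativity of the given square forces $\sigma$ and $\tau$ to have the same image in $S_{p(x)/}$. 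Consequently $(\sigma,\tau)$ assembles into a single map
\[
\Delta^{\set{2,\ldots,n}} \to X_{x/}\times_{S_{p(x)/}} S_{p(e)/},
\]
and a diagonal filler $\Delta^n \to X$ of the original square is precisely a lift of this map through the restriction $X_{e/}\to X_{x/}\times_{S_{p(x)/}} S_{p(e)/}$.

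Because the restriction map is a trivial Kan fibration, it has the right lifting property against every monomorphism, in particular against $\emptyset \subseteq \Delta^{\set{2,\ldots,n}}$, so the desired lift exists and produces the filler. The main obstacle is not conceptual but bookkeeping: one must carefully unwind the join--slice adjunction so as to verify that the compatibility conditions for the various slice simplicial sets match up correctly. As the parenthetical in the statement indicates, the concrete use of this lemma in the proof of Proposition~\ref{prop:descent pre-fibrant simp sets} admits a more elementary ad-hoc argument, building up from $A$ to $\Lambda^n_0$ by successively filling outer $2$-horns (each of which is fillable by the defining property of the cocartesian edge $e$) and then filling the final $\Lambda^n_0 \subseteq \Delta^n$ using the cocartesian property again in dimension $n$; this avoids the trivial-fibration characterization of cocartesian edges entirely.
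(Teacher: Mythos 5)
Your proof is correct, but it takes a genuinely different route from the paper's. The paper uses the horn-lifting property of cocartesian edges exactly once, in dimension two: since $u|\Delta^{\set{0,1}}$ is $p$-cocartesian, the outer horn $\Delta^{\set{0,1}}\cup\Delta^{\set{0,2}}$ can be filled over $S$ to extend the given map over $\Delta^{\set{0,2,\ldots,n}}\cup\Delta^{\set{0,1,2}}$, and it then observes that the inclusion $\Delta^{\set{0,2,\ldots,n}}\cup\Delta^{\set{0,1,2}}\subseteq\Delta^n$ is inner anodyne, so the inner fibration $p$ alone supplies the rest of the filler. You instead work with the slice formulation: cocartesianness of $e$ says precisely that $X_{e/}\to X_{x/}\times_{S_{p(x)/}}S_{p(e)/}$ is a trivial Kan fibration (this is essentially the definition in Section 2.4.1 of \cite{HTT}, rather than the proposition number you cite, but the content is the same), and the join decompositions $\Delta^n=\Delta^{\set{0,1}}\star\Delta^{\set{2,\ldots,n}}$ and $\Delta^{\set{0,2,\ldots,n}}=\set{0}\star\Delta^{\set{2,\ldots,n}}$ convert the square into a lifting problem against $\emptyset\subseteq\Delta^{\set{2,\ldots,n}}$ for that trivial fibration; the compatibility over $S_{p(x)/}$ is exactly the commutativity of the given square, and a lift of $(\sigma,\tau)$ unwinds to a simplex of $X$ with all three required restrictions, so the translation is airtight. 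What each approach buys: yours is shorter and needs no auxiliary inner-anodyne inclusion, at the price of the adjunction bookkeeping and the trivial-fibration characterization; the paper's uses only the elementary outer-horn filling plus a one-line inner-anodyne claim, in keeping with its general strategy of explicit anodyne extensions. One caution about your closing aside: building up to $\Lambda^n_0$ ``by successively filling outer $2$-horns'' is not enough as stated for $n\geq 3$, since the missing faces $\partial_j\Delta^n$ for $2\leq j\leq n$ are $(n-1)$-dimensional copies of the same configuration; that sketch needs an induction on dimension (or, as in the paper, a single $2$-horn filling followed by an inner anodyne extension).
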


\begin{proof} 
If $u|\Delta^{\set{0,1}}$ is $p$-cocartesian, then we can find 
the indicated diagonal filler in the diagram 
\[
\begin{tikzcd} 
\Delta^{\set{0,2,\ldots,n}}\cup \Delta^{\set{0,1}} 
\arrow[d] \arrow[r,"u"] & X \arrow[dd,"p"] \\ 
\Delta^{\set{0,2,\ldots,n}}\cup 
\Delta^{\set{0,1,2}} \arrow[ur,dashed] \arrow[d] & \\ 
\Delta^{\set{0,\ldots,n}} \arrow[r] & S 
\end{tikzcd} 
\]
A straightforward argument shows that the map 
$\Delta^{\set{0,2,\ldots,n}}\cup 
\Delta^{\set{0,1,2}}\to \Delta^{\set{0,\ldots,n}}$ 
is inner anodyne.  This suffices to complete the proof.  
\end{proof}

We now prove Proposition~\ref{prop:key tech prop2}.  As 
noted earlier, this proposition is a generalization 
of Lemma 3.1.2.4 from \cite{HA}.    
We recall the statement.  

\begin{proposition*} 
Let $p\colon \calC\to \Lambda^n_i$ be an inner fibration where $n\geq 3$.  
Let $q$ denote the projection 
\[
q\colon \Delta^{\set{1,\ldots,n}}\times_{\Lambda^n_i} \calC\to 
\Delta^{\set{1,\ldots,n}}. 
\] 
Suppose that the following conditions are satisfied:  
\begin{enumerate}[label=(\roman*)]
\item for every vertex $x\in \calC_j$, $1\leq j\leq n-2$, there 
exists a $q$-cocartesian morphism $f\colon x\to y$ where 
$y\in \calC_{j+1}$; 

\item for every vertex 
$y\in \calC_j$, $2\leq j\leq n-1$ there exists a 
$q$-cartesian morphism $f\colon x\to y$ where 
$x\in \calC_{j-1}$. 
\end{enumerate}
Then $p\colon \calC\to \Lambda^n_i$ satisfies 
inner anodyne descent with respect to the inclusion 
$\Lambda^n_i\subseteq \Delta^n$.  
\end{proposition*}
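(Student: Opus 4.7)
The plan is to construct an inner fibration $q\colon \calD \to \Delta^n$ together with an inner anodyne inclusion $g\colon \calC \to \calD$ fitting into a pullback square with $p$ and the inclusion $\Lambda^n_i \hookrightarrow \Delta^n$. I would build $\calD$ as the colimit of an ascending sequence $\calC = \calD_0 \subseteq \calD_1 \subseteq \calD_2 \subseteq \cdots$ over $\Delta^n$, where at each stage all newly-attached non-degenerate simplices lie above simplices of $\Delta^n$ not contained in $\Lambda^n_i$---namely, above $\Delta^n$ itself or above the missing face $\Delta^{\set{0,\ldots,\hat{i},\ldots,n}}$. This design guarantees that the pullback $\calD \times_{\Delta^n} \Lambda^n_i$ remains equal to $\calC$ at every stage, and Lemma~\ref{lem:ladders of inner anodynes} reduces the inner anodyne-ness of $\calC \to \calD$ to inner anodyne-ness of each individual inclusion $\calD_s \to \calD_{s+1}$.

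The crucial first stage is where conditions (i) and (ii) are used. For each appropriate family of data in $\calC$ over the faces of $\Lambda^n_i$ meeting $\Delta^{\set{1,\ldots,n}}$, I would construct the missing $n$-simplex over $\Delta^n$ (together with its face over $d_i\Delta^n$) as follows: iterated applications of (i) yield a chain of $q$-cocartesian edges lifting $\Delta^{\set{j,j+1}}$ for $1\leq j \leq i-1$, and iterated applications of (ii) yield a chain of $q$-cartesian edges lifting $\Delta^{\set{j-1,j}}$ for $i+1\leq j \leq n-1$. Lemma~\ref{lem:cocart lemma} then promotes these one-dimensional lifts to compatible higher-dimensional simplices lying over progressively larger subsets of $\Delta^n$, ultimately producing an $n$-simplex $\sigma\colon\Delta^n\to \calD_1$ together with its face $d_i\sigma$ over $d_i\Delta^n$. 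That the resulting inclusion $\calC \to \calD_1$ is inner anodyne follows from Lemma~\ref{lem:variant of 5.4.5.10} and Corollary~\ref{cor:variant of 5.4.5.10}, combined with Lemma~\ref{lem:pushout lemma for ia} and the right cancellation property (Lemma~\ref{lem:right cancellation inner}).

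Subsequent stages proceed by a routine small object argument: form $\calD_{s+1}$ from $\calD_s$ by pushing out along inner horn inclusions $\Lambda^m_k \hookrightarrow \Delta^m$ for every commutative square
\[
\begin{tikzcd}
\Lambda^m_k \arrow[d] \arrow[r] & \calD_s \arrow[d] \\
\Delta^m \arrow[r,"\beta"] & \Delta^n
\end{tikzcd}
\]
with $0<k<m$ and $\beta$ not factoring through $\Lambda^n_i$. Once $q\colon \calD \to \Delta^n$ is constructed, it is then an inner fibration: inner horns $\Lambda^m_k \to \calD$ whose $\Delta^n$-image lies in $\Lambda^n_i$ factor through $\calC$ (by the pullback property) and are fillable via $p$, while the remaining inner horns have been filled by the small object construction---provided the first stage produced enough $n$-simplices over $\Delta^n$ that no ``unfillable'' identity-type horns remain. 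The main obstacle is thus the first stage itself: combining the cocartesian and cartesian chains from (i) and (ii) into a single coherent $n$-simplex in $\calD_1$ consistent with the already-given data in $\calC$. The technical heart of the argument is the iterated application of Lemma~\ref{lem:cocart lemma} to propagate lifts across successive faces, together with repeated use of Lemma~\ref{lem:variant of 5.4.5.10} and Corollary~\ref{cor:variant of 5.4.5.10} to recognize the resulting attaching inclusions as inner anodyne.
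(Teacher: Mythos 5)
There is a genuine gap, and it sits exactly where you wave your hands. Your plan for the later stages --- ``push out along inner horn inclusions $\Lambda^m_k\subseteq\Delta^m$ whenever the projection $\beta\colon\Delta^m\to\Delta^n$ does not factor through $\Lambda^n_i$'' --- does not preserve the pullback condition $\calD_s\times_{\Delta^n}\Lambda^n_i=\calC$. Such a pushout adjoins not only the top cell of $\Delta^m$ but also its missing face $d_k\Delta^m$, and for $n\geq 3$ the composite $\beta d^k$ can perfectly well factor through $\Lambda^n_i$ (namely through $\partial_j\Delta^n$ for some $j\neq i$ when $\beta^{-1}\set{j}=\set{k}$), so the pushout adds a new simplex over $\Lambda^n_i$ that has no reason to lie in $\calC$. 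This is precisely the phenomenon that is absent in the two-dimensional case (Lemma~\ref{lem:descent for spine inclusions}, where the argument you are imitating does work because $\beta(0)\leq\beta(k)\leq\beta(m)$ forces $\beta d^k$ out of $\Lambda^2_1$), and it is the reason descent can fail outright for $n\geq 3$, as the example $\Delta^{\set{0,2}}\cup\Delta^{\set{2,3}}\to\Lambda^3_1$ shows. So the ``routine small object argument'' is not available; the hypotheses (i) and (ii) cannot be quarantined into a first stage.

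What the proof actually requires --- and what your proposal never supplies --- is a mechanism for filling, at \emph{every} stage, an inner horn $\alpha\colon\Lambda^r_k\to\calC(m)$ over $\Delta^n$ whose missing face lies over $\Lambda^n_i$, in such a way that this face is produced \emph{inside} $\calC$ (so the pullback is unchanged) and the enlargement $\calC(m)\subseteq\calC(m,\alpha)$ is inner anodyne. In the paper this is done per horn, following Lurie's Lemma 3.1.2.4: writing $\Delta^r=A_0\star\cdots\star A_n$ according to the fibers of $\bar\alpha_0$, one chooses a $q$-cocartesian edge $f\colon X\to Y$ out of the vertex sitting over the offending index $j$ (this is where (i)/(ii) enter), uses Lemma~\ref{lem:cocart lemma} once to start, and then builds maps on a filtration $K_0\subseteq K_1\subseteq K_2$ of the $(r+1)$-simplex $K_3=A_0\star\cdots\star A_j\star\set{y}\star A_{j+1}\star\cdots\star A_n$, the face over $\partial_j\Delta^n$ being constructed inside $\calC$ by extending along an inner anodyne inclusion using that $p$ is an inner fibration; one then sets $\calC(m,\alpha)=\calC(m)\cup_{K_2}K_3$ and checks, via Lemma~\ref{lem:variant of 5.4.5.10}, Corollary~\ref{cor:variant of 5.4.5.10} and the cancellation property, that $K_2\subseteq K_3$ is inner anodyne and the fiber over $\Lambda^n_i$ is unchanged. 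Your first-stage construction of ``an $n$-simplex over $\Delta^n$'' by chaining (co)cartesian edges, even if carried out, does not address these horns: the caveat ``provided the first stage produced enough $n$-simplices so that no unfillable identity-type horns remain'' is exactly the statement you would have to prove, and no argument for it is given. As it stands the proposal establishes neither that the colimit map is an inner fibration nor that the pullback over $\Lambda^n_i$ is $\calC$.
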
 

\begin{proof} 
The proof that we give is an adaptation of 
Lurie's proof of Lemma 3.1.2.4 of \cite{HA}.  
The objective is to construct a sequence 
\[
\calC = \calC(0)\subseteq \calC(1)\subseteq \calC(2)\subseteq \cdots 
\]
in $(\sSet)_{/\Delta^n}$ 
where each inclusion $\calC(m)\subseteq \calC(m+1)$ is 
inner anodyne and where the following conditions are 
satisfied: 

\begin{enumerate}[label=(\roman*)]
\item if $\Lambda^r_k\to \calC(m)$ is a map in 
$(\sSet)_{/\Delta^n}$ with $0<k<r$, then the 
composite map $\Lambda^r_k\to \calC(m)\to \calC(m+1)$ factors 
through the inclusion $\Lambda^r_k\subseteq \Delta^r$ in 
$(\sSet)_{/\Delta^n}$; 

\item for each $m\geq 0$ the diagram 
\[
\begin{tikzcd} 
\calC \arrow[d,"p"] \arrow[r] & \calC(m) \arrow[d] \\ 
\Lambda^n_i \arrow[r] & \Delta^n 
\end{tikzcd} 
\]
is cartesian.  
\end{enumerate}

The construction proceeds by induction on $m\geq 0$.  Assuming that 
$\calC(m)$ has been defined, let $A$ denote the set of maps 
$\alpha\colon \Lambda^r_k\to \calC(m)$ in 
$(\sSet)_{/\Delta^n}$ with $0<k<r$.  
It suffices to construct, for 
each map $\alpha$ in $A$, a 
simplicial set $\calC(m,\alpha)$ in $(\sSet)_{/\Delta^n}$  
with the following 
properties: 
\begin{enumerate}[label=(\roman*)]
\setcounter{enumi}{2}
\item the inclusion $\calC(m)\subseteq \calC(m,\alpha)$ is inner 
anodyne; 

\item the composite map $\Lambda^r_k\xrightarrow{\alpha} 
\calC(m)\subseteq \calC(m,\alpha)$ can be extended to 
an $r$-simplex of $\calC(m,\alpha)$; 

\item the diagram 
\[
\begin{tikzcd} 
\calC \arrow[d,"p"] \arrow[r] & \calC(m,\alpha) \arrow[d] \\ 
\Lambda^n_i \arrow[r] & \Delta^n 
\end{tikzcd} 
\]
is cartesian.     
\end{enumerate} 

Assuming that the simplicial sets $\calC(m,\alpha)$ 
have been constructed satisfying the properties 
(iii) -- (v) above, we set $\calC(m+1)$ to be 
the coproduct $\sqcup_{\alpha\in A}\calC(m,\alpha)$ 
in the category $(\sSet)_{\calC(m)//\Delta^n}$.  It 
is not hard to show that the canonical map 
$\calC(m)\to \calC(m+1)$ is inner anodyne and that 
the properties (i) and (ii) above are satisfied.  

Suppose given a map $\alpha\colon \Lambda^r_k\to \calC(m)$ 
in $A$; let $\alpha_0\colon \Lambda^r_k\to \Delta^n$ denote 
the composite map.  Without loss of generality we may 
suppose that the image of $\alpha_0$ contains 
$\partial_i\Delta^n = \Delta^{\set{0,\ldots,i-1,i+1,\ldots,n}}
\subseteq \Delta^n$.  Note that 
$\alpha_0(0) = 0$ and $\alpha_0(r) =n$.  Let 
$\bar{\alpha}_0\colon \Delta^r\to \Delta^n$ 
denote the unique extension of $\alpha_0$.  
Without loss of generality we may suppose that 
the canonical map 
\[
\Lambda^r_k\times_{\Delta^n}\Lambda^n_i \hookrightarrow 
\Delta^r\times_{\Delta^n}\Lambda^n_i 
\]
is not surjective (if the canonical map is 
surjective then we may take $\calC(m,\alpha) = 
\calC(m)\cup_{\Lambda^r_k}\Delta^r$ and check 
that the properties (iii) -- (v) above 
are satisfied). 

Therefore, the simplex 
$\partial_k\Delta^r = \Delta^{\set{0,\ldots,k-1,k+1,\ldots,r}}$ is mapped 
into $\Lambda^n_i$ by $\bar{\alpha}_0$.  
Since $\bar{\alpha}_0(0) =0$ and $\bar{\alpha}_0(r) = n$, 
the simplex $\partial_k\Delta^{r}$ must be 
mapped into a simplex $\partial_j\Delta^{n} = 
\Delta^{\set{0,\ldots,j-1,j+1,\ldots,n}}$ of 
$\Lambda^n_i$ by $\bar{\alpha}_0$. Note that 
$0<j<n$ and $j\neq i$.  Under the given hypotheses 
we may suppose, by a duality argument, that without 
loss of generality $j\leq n-2$.    
Note also that we must have $\bar{\alpha}_0^{-1}\set{j} = \set{k}$, 
since otherwise the vertex $j$ belongs to the image of 
$\partial_k\Delta^{r}$ under $\bar{\alpha}_0$ (recall that the 
image of $\bar{\alpha}_0$ contains $\partial_i\Delta^n$).

Write $A_l = \bar{\alpha}_0^{-1}\set{l}$ for 
each $l=0,\ldots, n$.  Note that 
$A_j = \set{x}$, where $x$ denotes the $k$-th vertex of 
$\Delta^r$.  It follows that we have a canonical isomorphism  
\[
\Delta^r = A_0\star  \cdots 
\star A_n.  
\]
Note that $A_0$ and $A_n$ are non-empty and that each 
$A_l$ (if non-empty) is isomorphic to a simplex.  It is 
convenient to introduce the following shorthand notation: 
if $\set{i_1,\ldots, i_p}$ is a subset of $\set{0,\ldots, n}$ then 
we write 
\[
A_{i_1,\ldots, i_p} = A_{i_1}\star \cdots \star A_{i_p}.  
\]
Let $X$ denote the image of the vertex $x$ under the map 
$\alpha$, and choose a $q$-cocartesian arrow $f\colon X\to Y$
in $\calC$, where $Y\in \calC_{j+1}$.  
Since $f$ is $q$-cocartesian, Lemma~\ref{lem:cocart lemma} 
shows that we can find the indicated diagonal filler $\beta$ 
making the diagram 
\[
\begin{tikzcd} 
\set{x}\star \set{y}\cup \set{x}\star A_{j+1,\ldots, n} 
 \arrow[d] \arrow[r] & \calC    \arrow[d,"p"]                  \\ 
\set{x}\star \set{y}\star A_{j+1,\ldots, n} 
 \arrow[ur,dashed,"\beta"'] \arrow[r] & \Lambda^n_i 
\end{tikzcd} 
\]
commute, where the upper horizontal map is induced by $f$ and $\alpha$.  

Let $K_3 = A_0\star \cdots \star 
A_j\star \set{y}\star A_{j+1}\star \cdots \star A_n$ so that 
$K_3$ is isomorphic to $\Delta^{r+1}$.  By an 
abuse of notation let us denote by $\overline{\alpha}_0\colon K_3\to 
\Delta^n$ the canonical extension of 
$\overline{\alpha}_0\colon \Delta^r\to \Delta^n$ 
with $\bar{\alpha}_0(y) = j+1$.  

Our aim is to construct (following Lurie's proof of Lemma 3.1.2.4 in \cite{HA}) a sequence of simplicial sets 
\[
K_0\subseteq K_1 \subseteq K_2 \subseteq K_3 
\]
where each inclusion is a categorical equivalence, 
together with maps 
$K_i\to \calC(m)$ for $i=0,1,2$, compatible with the inclusions and with the maps 
$\alpha$ and $\beta$.  
We will then set 
\[
\calC(m,\alpha) = \calC(m)\cup_{K_2} K_3
\]
and show that $\calC(m,\alpha)$ has the desired properties.

To begin with, we define the 
subcomplex $K_0$ of $K_3$ to be equal to 
the inverse image of $\Lambda^n_i\cap \Lambda^n_j\cap \Lambda^n_{j+1}$ 
in $K_3$ under the map $\overline{\alpha}_0$.  In other 
words 
\[
K_0 = \bigcup^{j-1}_{l=0, l \neq i}A_{0,\ldots, \hat{l},\ldots, j} 
\star \set{y}\star A_{j+1,\ldots, n} \cup 
\bigcup^n_{l=j+2, l\neq i}A_{0,\ldots, j}\star \set{y}\star 
A_{j+1,\ldots, \hat{l},\ldots, n}
\]
where the hats indicate omission.  
Next, we show that there is a map $\phi\colon K_0\to \calC$ which 
is compatible with $\alpha$ and $\beta$.  Define a sequence of 
inclusions 
\[
K_0^{(0)} \subseteq K_0^{(1)} \subseteq \cdots 
\subseteq K_0^{(j-1)} = K_0^{(j)} = K_0^{(j+1)} \subseteq \cdots 
\subseteq K_0^{(n)} = K_0 
\]
by setting 
\[
K_0^{(t)} = \bigcup^t_{l=0,l\neq i} A_{0,\ldots, \hat{l},\ldots,j} 
\star \set{y}\star A_{j+1,\ldots, n} 
\]
for $0\leq t\leq j-1$ and 
\[
K_0^{(t)} = \bigcup^t_{l=j+2, l\neq i} A_{0,\ldots, j}\star 
\set{y}\star A_{j+1,\ldots, \hat{l},\ldots ,n} 
\]
for $j+2\leq t\leq n$.  

For each $t=0,\ldots n$ we will construct a 
compatible sequence of maps $\phi_t\colon K_0^{(t)}\to 
\calC(m)$, each of which is compatible with $\alpha$ and 
$\beta$.  The construction is by induction on $t$.  

For the case $t=0$, we observe that the inclusion 
\[
A_j\star \set{y}\star A_{j+1,\ldots,n} 
\cup A_{1,\ldots ,j}\star A_{j+1, 
\ldots ,n}                                 
\subseteq A_{1,\ldots, j}\star \set{y}\star A_{j+1,
\ldots ,n} = K_0^{(0)} 
\] 
is inner anodyne, since the inclusion $A_j\subseteq 
A_{1, \ldots ,j}$ is right anodyne (Lemma 2.1.2.3 
of \cite{HTT}).    
Therefore, since $p\colon \calC\to \Lambda^n_i$ 
is an inner fibration, we can choose the indicated diagonal 
filler $\phi_0$ making the diagram 
\[
\begin{tikzcd} 
A_j\star \set{y}\star A_{j+1,\ldots,n} 
\cup A_{1,\ldots ,j}\star A_{j+1, 
\ldots ,n}   \arrow[d] \arrow[r] & \calC  \arrow[d]     \\ 
K_0^{(0)}  
\arrow[ur,dashed,"\phi_0"'] \arrow[r] & \Lambda^n_i
\end{tikzcd} 
\]
commute, where the upper horizontal map is induced by $\alpha$ and 
$\beta$.  

Suppose that a map $\phi_t\colon K_0^{(t)}\to \calC$ 
has been constructed, whose restriction to $K_0^{(r)}$ 
is equal to $\phi_r$ for all $r\leq t$.    
We construct a map $\phi_{t+1}\colon K_0^{(t+1)}\to \calC$ 
whose restriction to $K_0^{(t)}$ is equal to $\phi_t$.    

If $t<j$ then the inclusion $K_0^{(t)}\cup A_{0,\ldots, \widehat{t+1},\ldots, j} 
\star A_{j+1,\ldots, n} \subseteq K_0^{(t+1)}$ 
is obtained as the pushout 
\[
\begin{tikzcd} 
\left(K_0^{(t)}\cup A_{0,\ldots, \widehat{t+1},\ldots, j} 
\star A_{j+1,\ldots, n}\right) \cap A_{0,\ldots,\widehat{t+1},\ldots ,j} 
\star\set{y}\star A_{j+1,\ldots, n} \arrow[r] \arrow[d] 
& K_0^{(t)} \arrow[d]                                   \\ 
A_{0,\ldots,\widehat{t+1},\ldots ,j} 
\star\set{y}\star A_{j+1,\ldots, n} \arrow[r] & 
K_0^{(t+1)} 
\end{tikzcd} 
\]
An application of Lemma~\ref{lem:variant of 5.4.5.10} combined  
with Lemma~\ref{lem:joins and inner anodynes} shows that 
the left hand vertical map is inner anodyne, and hence 
that the inclusion $K_0^{(t)}\cup A_{0,\ldots, \widehat{t+1},\ldots, j} 
\star A_{j+1,\ldots, n}\subseteq K_0^{(t+1)}$ is inner 
anodyne.  Therefore, the indicated diagonal 
filler $\phi_{t+1}$ exists in the diagram 
\[
\begin{tikzcd} 
K_0^{(t)}\cup A_{0,\ldots, \widehat{t+1},\ldots, j} 
\star A_{j+1,\ldots, n} \arrow[r] \arrow[d] & \calC \arrow[d] \\ 
K_0^{(t+1)} \arrow[r] \arrow[ur,dashed,"\phi_{t+1}"'] & \Lambda^n_i 
\end{tikzcd} 
\]
where the upper horizontal map is induced by $\phi_t$ and $\alpha$.  
If $t>j+1$ then the argument is analogous, except that either 
Lemma~\ref{lem:variant of 5.4.5.10} or Corollary~\ref{cor:variant of 5.4.5.10} 
is combined with Lemma~\ref{lem:joins and inner anodynes} to show that 
the inclusion $K_0^{(t)}\cup A_{0,\ldots,j}\star A_{j+1,
\ldots, \widehat{t+1},\ldots, n}\subseteq K_0^{(t+1)}$ 
is inner anodyne.  
This completes the inductive step, and hence the construction of the 
map $\phi\colon K_0\to \calC$.

We now construct a subcomplex $K_1\subseteq K_3$
by 
\[
K_1 = K_0\cup \Lambda^r_k.  
\]
Hence $K_1$ forms part of a pushout diagram  
\[
\begin{tikzcd} 
K_0\cap \Lambda^r_k  
\arrow[d] \arrow[r] & K_0 \arrow[d]                \\ 
\Lambda^r_k \arrow[r] & K_1.  
\end{tikzcd} 
\]
The inclusion $K_0\cap \Lambda^r_k \subseteq \Delta^r$ 
is equal to the map 
\[
\bigcup^{n}_{\stackrel{\scriptstyle{l=0}}{l\neq i,j,j+1}}A_{0,\ldots, \hat{l},\ldots, n} 
\subseteq \Delta^r.  
\]
Since $A_j\star A_{j+1}$ is weakly contractible it follows 
from Lemma~\ref{lem:variant of 5.4.5.10} that $K_0\cap 
\Lambda^r_k\subseteq \Delta^r$ is inner anodyne.  Therefore, 
$K_0\cap \Lambda^r_k\subseteq \Lambda^r_k$ is a 
categorical equivalence, by the 2-out-of-3 property 
of categorical equivalences 
(Remark~\ref{rem:2 out of 3 cat equiv}).  It follows 
(Lemma~\ref{lem:cat equivs stable under po}) that the inclusion 
$K_0\subseteq K_1$ is a categorical equivalence.   
Let $\psi\colon K_1\to \calC(m)$ denote the unique map 
extending $\alpha\colon \Lambda^r_k\to \calC(m)$ and 
$\phi\colon K_0\to \calC$.     

Finally, we construct the subcomplex $K_2\subseteq K_3$ 
by 
setting 
\[
K_2 = K_1 \cup A_{0,\ldots, j-1} \star \set{y}\star A_{j+1, 
\ldots ,n}.  
\]
The inclusion $K_1\subseteq K_2$ is a pushout of the inclusion 
\[
K_0\cap A_{0, \ldots,j-1}\star \set{y}\star A_{j+1, 
\ldots,n} 
\subseteq A_{0,\ldots,j-1}\star \set{y}\star A_{j+1, 
\ldots,n} 
\]
since $A_{0,\ldots,j-1}\star \set{y}\star A_{j+1, 
\ldots,n} \cap \Lambda^r_k = \emptyset$ (recall that $\Lambda^r_k$ is 
the union of the faces of $\Delta^r$ which contain the vertex $x$).  An argument 
analogous to the argument used above to show that 
$K_0\cap \Lambda^r_k\subseteq \Delta^r$ is inner anodyne 
shows that the inclusion $K_0\cap A_{0,\ldots,
j-1}\star \set{y}\star A_{j+1,\ldots,n} 
\subseteq A_{0, \ldots,j-1}\star \set{y}\star 
A_{j+1,\ldots ,n}$ is inner anodyne.  Since 
$p\colon \calC\to \Lambda^n_i$ is an inner 
fibration, the restriction of $\phi$ extends along the 
inclusion $K_0\cap A_{0,\ldots,
j-1}\star \set{y}\star A_{j+1,\ldots,n} 
\subseteq A_{0, \ldots,j-1}\star \set{y}\star 
A_{j+1,\ldots ,n}$ to define a map 
\[
\chi\colon A_{0, \ldots,j-1}\star \set{y}\star 
A_{j+1,\ldots ,n}\to \calC.  
\]
There is a unique map $K_2\to 
\calC(m)$ extending the map $\psi$ and the map $\chi$. 
We define 
\[
\calC(m,\alpha) = \calC(m)\cup_{K_2} K_3 
\]
An easy 2-out-of-3 argument shows that the inclusion 
$K_2\subseteq K_3$ is a categorical equivalence; since 
$K_3$ is an $\infty$-category it follows by 
Lemma~\ref{lem:easy version of Joyals conj} that 
$K_2\subseteq K_3$ is inner anodyne.  Hence the inclusion 
$\calC(m)\subseteq \calC(m,\alpha)$ is inner anodyne.  

To complete the construction we need to show that the induced map 
\[
\calC\subseteq \calC(m,\alpha)\times_{\Delta^n}\Lambda^n_i 
\]
is an isomorphism.  It suffices to prove that the induced 
map 
\[
K_2\times_{\Delta^n}\Lambda^n_i \subseteq 
K_3\times_{\Delta^n}\Lambda^n_i 
\]
is an isomorphism, which is clear.  
\end{proof}

\section{Proof of Lemma~\ref{lem:bergners lemma}}
\label{sec:proof of bergners lemma}

Our objective in this section is to prove 
Lemma~\ref{lem:bergners lemma}.  Recall 
that this is the analog for simplicial sets 
of Lemma 2.4 in \cite{Bergner}.  
We will need some preliminary lemmas.    

Firstly, we shall need the following result 
asserting that inner anodyne maps 
pullback along Kan fibrations to inner anodyne 
maps (this could be deduced from 
Proposition 3.3.1.3 of \cite{HTT} 
however the proof of the latter result 
is much more difficult).  

\begin{lemma} 
\label{lem:pullback of inner anodyne along Kan fibn}
Suppose that $p\colon X\to S$ is a Kan fibration 
between simplicial sets.  Then the induced map 
\[
A\times_S X\to B\times_S X 
\]
is inner anodyne for any inner anodyne map 
$A\to B$ in $(\sSet)_{/S}$.  
\end{lemma}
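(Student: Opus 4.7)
Plan:

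I would proceed by saturation and reduce to a specific case. First, I observe that the class of maps $u \colon A \to B$ in $(\sSet)_{/S}$ whose pullback $A \times_S X \to B \times_S X$ is inner anodyne forms a weakly saturated class: pullback along the fixed map $p$ preserves colimits and monomorphisms, and the class of inner anodyne maps is itself weakly saturated. Hence it suffices to check the containment on the generating inner horn inclusions $\Lambda^n_i \hookrightarrow \Delta^n$ (with $0 < i < n$, $n \geq 2$) equipped with an arbitrary structure map $\sigma \colon \Delta^n \to S$. Writing $Y = \sigma^*X$ and $q \colon Y \to \Delta^n$ for the pullback of $p$ (a Kan fibration over $\Delta^n$), the claim reduces to showing that $q^{-1}(\Lambda^n_i) \hookrightarrow Y$ is inner anodyne.

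At this point I would invoke Theorem~\ref{thm:C}. The inclusion $q^{-1}(\Lambda^n_i) \hookrightarrow Y$ is obviously a monomorphism, and since $\Lambda^n_i$ and $\Delta^n$ share the same vertex set (for $0 < i < n$ with $n \geq 2$) its pullback is bijective on vertices as well. The entire claim thus comes down to verifying that this inclusion is a categorical equivalence.

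For the categorical-equivalence step, I would exploit the stronger lifting property of the Kan fibration $q$ (which allows filling all horns, not merely inner ones). Concretely, I would use Proposition~\ref{prop:descent pre-fibrant simp sets} to replace $Y$ by an inner-anodyne extension to a pre-fibrant simplicial set (carrying $q^{-1}(\Lambda^n_i)$ along), and then invoke Proposition~\ref{prop:key prop} together with Remark~\ref{rem:fully faithful for pre-fibrant simp set} to reduce the categorical-equivalence assertion to checking that, for each pair of vertices $v, w$, the induced map of naive mapping spaces $\Hom^L_{q^{-1}(\Lambda^n_i)}(v, w) \to \Hom^L_Y(v, w)$ is a homotopy equivalence of Kan complexes. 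The inner anodyne $\Lambda^n_i \hookrightarrow \Delta^n$ provides the required fill-data on the base, and the Kan-fibration property of $q$ lifts these fills (including outer-horn-type fills that arise in the boundary conditions defining $\Hom^L$) to produce explicit homotopy inverses on mapping spaces.

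The principal obstacle is this last step — producing the homotopy inverse on naive mapping spaces — and it is precisely where the Kan-fibration hypothesis, as opposed to a mere inner-fibration hypothesis, is essential: only the ability to fill outer horns of the form $\Lambda^{m+1}_0$ against $q$ converts the $\Hom^L$-boundary data (which pins down the terminal vertex as a constant simplex) into genuine horn-filling problems over $\Delta^n$. A secondary technical point is that the pre-fibrant replacement must be chosen compatibly on source and target so as not to destroy the Kan-fibration property of $q$; this is exactly the sort of compatibility arranged by the descent machinery of Section~\ref{sec:descent}, so I would rely on that to push the reduction through cleanly.
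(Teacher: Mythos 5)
Your opening reduction --- saturation, plus the observation that pullback along $p$ is a colimit-preserving functor $(\sSet)_{/S}\to(\sSet)_{/X}$, so that it suffices to treat a single inner horn $\Lambda^n_i\subseteq\Delta^n$ over $S$, i.e.\ a Kan fibration $q\colon Y\to\Delta^n$ --- is exactly how the paper begins, and your appeal to Theorem~\ref{thm:C} is not circular (that theorem is established in Section~\ref{sec:inner anodyne stuff} independently of the present lemma, which is only needed later, for Lemma~\ref{lem:bergners lemma}). The gap is in the one step that carries all the content: showing that $q^{-1}(\Lambda^n_i)\hookrightarrow Y$ is a categorical equivalence. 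Your proposed mechanism --- pass to pre-fibrant replacements and compare naive mapping spaces, with the Kan lifting property of $q$ ``producing explicit homotopy inverses'' --- is not an argument, and the difficulty it conceals is already visible for $n=2$: if $v,w\in q^{-1}(\Lambda^2_1)$ lie over the vertices $0$ and $2$, then every simplex of $\Hom^L_{Y}(v,w)$ projects to a simplex of $\Delta^2$ degenerate on the edge $\Delta^{\set{0,2}}$, which is not contained in $\Lambda^2_1$; hence $\Hom^L_{q^{-1}(\Lambda^2_1)}(v,w)=\emptyset$ while $\Hom^L_{Y}(v,w)$ is in general nonempty. So the comparison can only be made after a pre-fibrant replacement of $q^{-1}(\Lambda^n_i)$, and computing the mapping spaces of that replacement is precisely the original problem; nothing in your sketch does this. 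Note also that the descent machinery of Section~\ref{sec:descent} produces only an inner fibration over the enlarged base, so it neither preserves the Kan-fibration property of $q$ nor relates the replacement of $q^{-1}(\Lambda^n_i)$ to $Y$ in the way your last paragraph assumes.

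The paper closes this gap with a different, essentially classical idea that your proposal is missing: since inner anodyne maps are closed under retracts, one may assume that the Kan fibration $Y\to\Delta^n$ is minimal (the fiberwise deformation retraction onto a minimal subfibration exhibits the general inclusion as a retract of the minimal one); a minimal Kan fibration over a simplex is a trivial bundle, so $Y\cong\Delta^n\times K$ for a Kan complex $K$, and then $\Lambda^n_i\times K\to\Delta^n\times K$ is inner anodyne simply because inner anodyne maps are stable under products with an arbitrary simplicial set. If you wish to avoid minimal fibrations, you would need to supply an argument of the strength of Proposition 3.3.1.3 of \cite{HTT}, which the paper explicitly sets aside as much harder; as written, your fully-faithfulness step is an unproven restatement of the lemma rather than a proof of it.
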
 

\begin{proof} 
Let $\calA$ be the class of all monomorphisms $u\colon A\to B$ in 
$(\sSet)_{/S}$ with the property that the induced map 
\[
A\times_S X\to B\times_S X 
\]
is inner anodyne.  We will prove that $\calA$ contains the 
class of all inner anodyne maps in $(\sSet)_{/S}$.  
The functor $(\sSet)_{/S}\to (\sSet)_{/X}$ defined 
by pullback along the map $p\colon X\to S$ is a left 
adjoint and hence sends saturated classes to saturated 
classes.  Therefore it suffices to prove the statement 
in the special case that $u\colon A\to B$ is an 
inner horn inclusion $\Lambda^n_i\subseteq \Delta^n$ 
in $(\sSet)_{/S}$.  

Suppose given then a Kan fibration $p\colon X\to \Delta^n$.  
We need to prove that the induced map $\Lambda^n_i\times_{\Delta^n}X 
\to X$ is inner anodyne.  Since the class of inner anodyne 
maps is closed under retracts we may assume without 
loss of generality that the Kan fibration $X\to \Delta^n$ is minimal.  
But then a classical theorem asserts that $X\to \Delta^n$ 
is a trivial bundle and hence there is an isomorphism 
$X\simeq \Delta^n\times K$ for some Kan complex $K$. 
But then the induced map $\Lambda^n_i\times_{\Delta^n}X 
\to X$ is isomorphic to the map $\Lambda^n_i\times K 
\to \Delta^n\times K$ which is clearly inner 
anodyne.  
\end{proof}

Most of the work involved in proving 
Lemma~\ref{lem:bergners lemma} is contained 
in proving the following result.

\begin{lemma}
\label{lem:GJ lemma}
Suppose that $B$ is a simplicial set 
with two vertices $0$ and $1$ and suppose 
that $\set{0}\to B$ is a categorical equivalence.  
There is a countable subcomplex 
$D\subseteq B$ satisfying the conditions that 
(i) $\set{0}\to D$ is a 
categorical equivalence and (ii) the vertices 
$0$ and $1$ both belong to $D$.  
\end{lemma}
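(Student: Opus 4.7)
The plan is to construct $D$ as a countable union $D = \bigcup_{n \geq 0} D_n$ of countable subcomplexes of $B$ with $0, 1 \in D_0$, where each $D_{n+1}$ is obtained from $D_n$ by adjoining countably many simplices from $B$ chosen to repair the failure of $\{0\} \to D_n$ to be a categorical equivalence. A countable union of countable sets being countable, $D$ will have countably many non-degenerate simplices, and it will contain $0$ and $1$ by construction.

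The ingredient driving the construction is the following. Apply the small-object argument for the inner horn inclusions to obtain an inner anodyne map $B \hookrightarrow \widetilde{B}$ with $\widetilde{B}$ an $\infty$-category. Since $\{0\} \to B$ is a categorical equivalence and $B \to \widetilde{B}$ is inner anodyne, by 2-out-of-3 $\{0\} \to \widetilde{B}$ is a categorical equivalence of $\infty$-categories; Proposition~\ref{prop:ff + ess surj iff equiv} then forces $\widetilde{B}$ to be equivalent to $\Delta^0$, and hence to be a contractible Kan complex. Consequently every horn inclusion in $\widetilde{B}$, inner or outer, can be filled, and the small-object construction of $\widetilde{B}$ assembles such fillers from countably many inner-horn attachments, each ultimately built from a finite set of simplices of $B$.

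At the inductive step, I would enumerate the countably many horn-filling problems $\Lambda^m_i \to D_n$ (inner and outer) that must be solvable in order for $\{0\} \to D_n$ to be a categorical equivalence, trace the filler available in $\widetilde{B}$ back through the iterated small-object construction to a finite set of simplices of $B$, and take $D_{n+1}$ to be $D_n$ together with the union of these finite sets. Since each stage enlarges by countably many finite collections, $D_{n+1}$ is countable, and the resulting $D = \bigcup_n D_n$ is a countable subcomplex of $B$.

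The main obstacle I expect is verifying that $\{0\} \to D$ is actually a categorical equivalence at the end. My approach would be to form an inner anodyne extension $D \hookrightarrow \widetilde{D}$ and show that $\widetilde{D}$ is a contractible Kan complex: by Proposition~\ref{prop:ff + ess surj iff equiv} this reduces to showing that $0 \simeq 1$ in $\h(\widetilde{D})$ (which is immediate from the construction, since witnesses of this isomorphism in $\widetilde{B}$ are added to some $D_n$) and that the naive mapping spaces $\Hom^L_{\widetilde{D}}(0, v)$ are contractible (using the criterion of Remark~\ref{rem:fully faithful for pre-fibrant simp set}). The delicate point is to ensure that the witnesses captured at finite stages suffice for all higher-coherence conditions in the limit; this would rely on a careful enumeration of the combinatorial data together with the right-cancellation property of inner anodyne maps (Lemma~\ref{lem:right cancellation inner}) to thread the inductive bookkeeping together.
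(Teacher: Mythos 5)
Your overall strategy --- pass to the fibrant replacement $\widetilde{B}=R(B)$, observe it is a contractible Kan complex, and build $D$ as a countable closure by tracing filling data back through the small object construction --- is viable and is in the same spirit as the paper's argument, which is likewise a countable-closure argument run against a filtered-colimit-compatible replacement functor. But there is a genuine gap at the heart of your sketch: the collection of problems you propose to enumerate at each stage is the wrong one. The condition ``$\set{0}\to D_n$ is a categorical equivalence'' is not detected by horn-filling problems $\Lambda^m_i\to D_n$ with domain in the subcomplex $D_n$ itself: such horns cannot be filled inside $B$ (which need not be an $\infty$-category, so the fillers are not simplices of $B$ you could adjoin), and arranging that the horns and spheres \emph{of $D$} become fillable in $\widetilde{D}$ does not make $\widetilde{D}$ a contractible Kan complex, because the horns and spheres of $\widetilde{D}$ involve the simplices freely attached by the small object argument and need not factor through $D$. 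The extension problems that actually have to be enumerated and solved at stage $n$ live in the replacement $R(D_n)$ (for instance, all sphere extension problems $\partial\Delta^m\to R(D_n)$, or the data making $\Hom^L_{R(D_n)}(0,0)$ contractible in the colimit), and to trace their solutions in $R(B)$ back to a countable subcomplex of $B$, and to conclude anything in the limit, one needs precisely the facts the paper establishes for its construction: the replacement is functorial, carries inclusions of subcomplexes to inclusions, and commutes with filtered colimits, so that $R(D)=\varinjlim_n R(D_n)$ and every problem over the colimit arises at a finite stage. This is exactly the ``delicate point'' your last paragraph defers, and the tool you invoke for it (right cancellation, Lemma~\ref{lem:right cancellation inner}) does not address it.

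For comparison, the paper resolves this point by a different device, inspired by Goerss--Jardine: it builds a functor $B'\mapsto X(B')=B'\times_{R(B')}T_{B'}(\set{0})$ on subcomplexes, natural and compatible with filtered colimits, with $X(B')\to B'$ a Kan fibration and $\set{0}\to X(B')$ always a categorical equivalence (this is where Lemma~\ref{lem:pullback of inner anodyne along Kan fibn} enters), so that ``$\set{0}\to B'$ is a categorical equivalence'' becomes equivalent to ``$X(B')\to B'$ is a trivial Kan fibration'' --- a right lifting property against $\partial\Delta^m\subseteq\Delta^m$, hence a countable family of finitary problems at each countable stage, automatically inherited by the colimit $D$. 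Your sketch could be repaired without the auxiliary $X(-)$ by running the closure over sphere problems $\partial\Delta^m\to R(D_n)$, filling them in the contractible Kan complex $R(B)$, tracing back through $R$, and concluding that $R(D)$ is a contractible Kan complex, so that $\set{0}\to D$ is a categorical equivalence by Lemma~\ref{lem:hty equiv between Kan complexes} and 2-out-of-3; but as written, the finitary criterion, its stability under the colimit, and the verification in the limit --- which constitute the actual content of the lemma --- are missing.
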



The proof of Lemma~\ref{lem:GJ lemma} 
will require some preparation.  Let $\calB$ denote 
the subcategory of $(\sSet)_{\sk_2B//B}$ 
whose objects are the subcomplexes $B'$ of 
$B$ such that $\sk_2(B') = \sk_2(B)$, and 
whose morphisms are the inclusions between them.  

\begin{construction}
We construct a functor 
\[
X\colon \calB \to (\sSet)_{\set{0}//B}^{[1]} 
\]
from $\calB$ to the arrow category of 
$(\sSet)_{\set{0}//B}$ 
which sends a subcomplex $B'\in \calB$ to 
a diagram 
\[
\set{0}\to X(B')\to B' 
\]
in which $\set{0}\to X(B')$ is a categorical 
equivalence and $X(B')\to B'$ is a Kan fibration 
which is natural in $B'\in \calB$.  
Moreover the functor $X$ is compatible with 
filtered colimits in $\calB$.    
\end{construction}

The construction proceeds as follows.  
It is directly inspired by the proof of 
Lemma X.2.8 from \cite{GJ}.    
The small object 
argument for the set of inner 
horn inclusions $\Lambda^n_i 
\subseteq \Delta^n$ with $0<i<n$ 
gives rise to a functor 
\[
R\colon \sSet\to \sSet
\]
together with a natural transformation 
$\mathrm{id}_{\sSet}\to R$ such that 
$R(S)$ is an $\infty$-category for every 
simplicial set $S$ and the component 
$S\to R(S)$ of the natural transformation 
is an inner anodyne map for 
every simplicial set $S$.  Moreover 
the functor $R$ is compatible with 
filtered colimits.  

The functor $R$ induces a functor 
$R\colon \calB\to (\sSet)_{/R(B)}$ such that 
for every subcomplex $B'\in \calB$ there 
is a commutative diagram, natural in $B'$, 
of the form  
\[
\begin{tikzcd}
B' \arrow[d] \arrow[r] & R(B') \arrow[d] \\ 
B \arrow[r] & R(B) 
\end{tikzcd} 
\]
in which the horizontal maps are inner anodyne.  
Note that since $\sk_2B' = \sk_2B$ for every 
subcomplex $B'$ in $\calB$, and $\h(B)$ is a 
(contractible) groupoid, it follows that 
$R(B')$ is a Kan complex for every $B'\in \calB$ 
(Corollary 1.4 of \cite{J1}).  
In particular $R(B)$ is a Kan complex.  

Using the small object 
argument for the set of horn inclusions 
$\Lambda^n_i\subseteq \Delta^n$ with $0\leq i\leq n$, we see that for 
every $B'\in \calB$, there exists a commutative diagram, 
natural in $B'$, of the form 
\begin{equation} 
\label{eq:diagram for X(B')}
\begin{tikzcd} 
\set{0} \arrow[d] \arrow[r] & T_{B'}(\set{0})\arrow[d] \\ 
B' \arrow[r] & R(B') 
\end{tikzcd} 
\end{equation}
in which $\set{0}\to T_{B'}(\set{0})$ is a homotopy equivalence 
and $T_{B'}(\set{0})\to R(B')$ is a Kan fibration.  
Since both $\set{0}$ and $T_{B'}(\set{0})$ are Kan complexes, 
it follows that the map $\set{0}\to T_{B'}(\set{0})$ 
is a categorical equivalence 
(Lemma~\ref{lem:hty equiv between Kan complexes}).  

Since the natural transformation $B'\to R(B')$ is compatible with filtered colimits 
in $B'$ it follows that the diagram~\eqref{eq:diagram for X(B')} 
is compatible with filtered colimits in $B'$.

Since each map $B'\to R(B')$ is inner anodyne, it 
follows (Lemma~\ref{lem:pullback of inner anodyne along Kan fibn}) 
that the induced maps $B'\times_{R(B')}T_{B'}(\set{0})\to 
T_{B'}(\set{0})$ is inner anodyne, and hence that the 
map $\set{0}\to B'\times_{R(B')}T_{B'}(\set{0})$ is a 
categorical equivalence by the 2-out-of-3 property 
for categorical equivalences.  

Define $X(B') := B'\times_{R(B')}T_{B'}(\set{0})$.  
Then the following statements are true for every 
$B'\in \calB$: 

\begin{itemize}
\item The induced map $X(B')\to B'$ is a Kan fibration which is natural in $B\in \calB$; 
\item The map $\set{0}\to X(B')$ is a categorical 
equivalence; 
\item The map $\set{0}\to B'$ is a categorical equivalence 
if and only if the map $X(B')\to B'$ is a trivial 
Kan fibration.  
\end{itemize} 

Moreover, since $R(B')$ and $T_{B'}(\set{0})$ are natural 
in $B'$ and compatible with filtered colimits in $\calB$, 
the construction $X(B')$ extends to define a functor 
\[
X\colon \calB\to (\sSet)_{\set{0}//B}^{[1]} 
\]
which is compatible with filtered colimits in $\calB$.


Observe that if $B'\in \calB$ 
has only countably many non-degenerate 
simplices then $R(B')$ has only countably 
many non-degenerate simplices.  Therefore 
the same is true for $T_{B'}(\set{0})$ and 
hence for $X(B')$.  

We can now prove Lemma~\ref{lem:GJ lemma}.

\begin{proof}[Proof of Lemma~\ref{lem:GJ lemma}]
Let $D_0 = \sk_2B$.  Consider all 
lifting problems for diagrams 
of the form 
\begin{equation}
\label{eq:analog of GJ diagram}
\begin{tikzcd} 
\partial\Delta^n \arrow[d] \arrow[r] & X(D_0) \arrow[r] & X(B) \arrow[d] \\ 
\Delta^n \arrow[urr,dashed] \arrow[r] & D_0 \arrow[r,hook] & B 
\end{tikzcd} 
\end{equation}
Since $X(B)\to B$ is a trivial Kan fibration each 
one of these diagrams has a solution.  The simplicial 
set $B$ is a filtered colimit of its countable 
subcomplexes and hence $X(B)$ is a filtered 
colimit of subcomplexes $X(B')$ with countably 
many non-degenerate simplices.  Since there 
are only countably many such diagrams of the 
form~\eqref{eq:analog of GJ diagram} and 
countably many such diagonal fillers,
 it follows that there is a   
countable subcomplex $D_1\subseteq B$ 
containing $D_0$ such that each 
of the lifting problems above has a solution 
in $X(D_1)$.  Note that $\sk_2D_1 = \sk_2B$ since 
we have the inclusions $\sk_2B \subseteq \sk_2D_1 
\subseteq \sk_2B$.    

Repeat this construction countably 
many times to obtain a sequence $D_0 \subseteq 
D_1 \subseteq D_2 \subseteq \cdots$ of $B$ 
such that all lifting problems of the above form 
over $D_i$ can be solved over $D_{i+1}$.  Let 
$D = \cup_{i\geq 0}D_i$.  Then $D$ is a countable 
subcomplex belonging to $\calB$ which contains the vertices $0$ and 
$1$ and is such that $\set{0}\to D$ is a categorical equivalence.  
This last statement follows from the fact that 
the map $X(D)\to D$ is a trivial Kan fibration since 
each lifting problem 
\[
\begin{tikzcd} 
\partial\Delta^n\arrow[r] \arrow[d] & X(D) \arrow[d] \\ 
\Delta^n \arrow[r] & D 
\end{tikzcd} 
\]
factors through $X(D_i)$ for some $i$ and hence can 
be solved over $D_{i+1}$.    
%
%
%
\end{proof} 

\begin{proof}[Proof of Lemma~\ref{lem:bergners lemma}] 
Since $S$ is a pre-fibrant simplicial set 
and $f\colon x\to y$ is an equivalence 
in $S$, it follows by Remark~\ref{rem:subcomplex S'} 
that there exists a subcomplex $S'\subseteq S$ 
containing $f$ and such that $\h(S') = J$.

Let $\Delta^1\to S'$ classify the edge $f$.  
Choose an inner anodyne map $S'\to K$ where 
$K$ is an $\infty$-category.  Since 
$\h(K) = J$ is a groupoid it follows that 
$K$ is a Kan complex (Corollary 1.4 of \cite{J1}).  The composite 
map $\Delta^1\to S'\to K$ factors through $J$, 
so that we have a commutative diagram 
\[
\begin{tikzcd} 
\Delta^1 \arrow[r] \arrow[d] & J \arrow[d] \\ 
S' \arrow[r] & K 
\end{tikzcd} 
\]
Factor the map $J\to K$ as $J\to Z\to K$ 
where $J\to Z$ is anodyne and $Z\to K$ is 
a Kan fibration.  Let $G$ be defined by 
the pullback diagram 
\[
\begin{tikzcd} 
G \arrow[r] \arrow[d] & Z \arrow[d] \\ 
S' \arrow[r] & K 
\end{tikzcd} 
\]
so that $G\to S'$ is a Kan fibration 
and $G\to Z$ is inner anodyne by 
Lemma~\ref{lem:pullback of inner anodyne along Kan fibn}.  

The map $J\to Z$ is a homotopy equivalence 
between Kan complexes and hence 
is a categorical equivalence 
(Lemma~\ref{lem:hty equiv between Kan complexes}).  It follows that 
the composite map $\set{0}\to J\to Z$ is a 
categorical equivalence.  Therefore the 
composite map $\set{0}\to \Delta^1\to G$ 
is a categorical equivalence.  The 
desired result then follows from 
Lemma~\ref{lem:GJ lemma}.    
\end{proof} 

\section*{Acknowledgements}

I am indebted to Thomas Nikolaus and David Roberts for their comments  
on a first draft of this paper.


\begin{thebibliography}{99} 

\bibitem{Bergner} 
J.\ Bergner, A model category structure on the 
category of simplicial categories, 
{\sl Trans.\ Amer.\ Math.\ Soc.}, Vol.\ {\bf 359}, 
No.~5 (2007) 2043--2058.  

\bibitem{BV} 
J.\ Boardman and R.\ Vogt, {\sl Homotopy invariant algebraic 
structures}, SLNM {\bf 347}   

\bibitem{DS} 
D.\ Dugger and D.\ Spivak, Mapping spaces in quasi-categories, 
{\sl Algebr.\ Geom.\ Topol.}, Vol.~{\bf 11}, no.~1 (2011) 263--325.  

\bibitem{DK} 
W.\ G.\ Dwyer and D.\ M.\ Kan, Simplicial localizations of categories, 
{\sl J.\ Pure Appl.\ Algebra} {\bf 17} (1980), no.~3, pp.~267--284.  

\bibitem{GJ} 
P.\ Goerss and J.\ F.\ Jardine, {\sl Simplicial homotopy theory}, 
Progress in Mathematics {\bf 174}, Birkh\"{a}user Verlag, Basel, 1999.   

\bibitem{Jardine} 
J.\ F.\ Jardine, Simplicial presheaves, {\sl J.\ Pure Appl.\ Algebra} 
{\bf 47} (1987), no.~1, 35--87.  

\bibitem{J1} 
A.\ Joyal, Quasi-categories and Kan complexes, {\sl J.\ Pure Appl.\ Algebra} {\bf 175} (2002) 207--222.  

\bibitem{J2} 
A.\ Joyal, {\sl The theory of quasicategories and its applications}, preprint, available for download at 
{\tt mat.uab.cat/\textasciitilde kock/crm/hocat/advanced-course/Quadern45-2.pdf}.  

\bibitem{J3} 
A.\ Joyal, Notes on quasi-categories, preprint, 
{\tt https://www.math.uchicago.edu/\textasciitilde may/IMA/Joyal.pdf}

\bibitem{JT} 
A.\ Joyal and M.\ Tierney, Quasi-categories vs Segal spaces, in {\sl Categories in algebra, geometry and 
mathematical physics}, 277--326, Contemp.\ Math.\ {\bf 431} Amer.\ Math.\ Soc., Providence, RI, 2007.  

\bibitem{HTT} 
J.\ Lurie, {\sl Higher topos theory}, Annals of Mathematics Studies {\bf 170}, Princeton University Press, 
Princeton, NJ, 2009.  

\bibitem{HA} 
J.\ Lurie, {\sl Higher algebra}, {\tt http://www.math.harvard.edu/\textasciitilde lurie/} (2018) 




\bibitem{S2} 
D.\ Stevenson, Stability for inner fibrations revisited, {\sl Theory Appl.\  
Categ.}, Vol.~{\bf 33}, No.~19, (2018), pp.~523--536.  

\end{thebibliography}
\end{document}